\newcommand{\losemi}{{\otimes \kern -.78em \ltimes}}
\newcommand{\rosemi}{{\otimes \kern -.78em \rtimes}}
\newcommand{\Hom}{\ensuremath{\operatorname{Hom}}}
\newcommand{\End}{\ensuremath{\operatorname{End}}}
\newcommand{\Ker}{\ensuremath{\operatorname{Ker} }}
\newcommand{\Ext}{\operatorname{Ext}}
\newcommand{\0}{\bar 0}
\newcommand{\1}{\bar 1}
\newcommand{\Z}{\mathbb{Z}}
\newcommand{\C}{\mathbb{C}}
\newcommand{\gl}{\ensuremath{\mathfrak{gl}}}
\newcommand{\g}{\ensuremath{\mathfrak{g}}}
\newcommand{\f}{\ensuremath{\mathfrak{f}}}
\newcommand{\z}{\ensuremath{\mathfrak{z}}}
\newcommand{\res}{\ensuremath{\operatorname{res}}}
\newcommand{\fg}{\ensuremath{\mathfrak{g}}}
\newcommand{\ff}{\ensuremath{\f}}
\newcommand{\fz}{\ensuremath{\mathfrak{z}}}
\newcommand{\V}{\mathcal{V}}
\newcommand{\HH}{\operatorname{H}}
\newcommand{\F}{\mathcal{F}}
\newcommand{\FF}{\mathcal{F}}
\newcommand{\XX}{\mathcal{X}}
\newcommand{\Add}{\operatorname{Add}}
\newcommand{\Proj}{\operatorname{Proj}}
\newcommand{\bT}{\mathbf T}
\newcommand{\bS}{\mathbf S}
\newcommand{\bA}{\mathbf A}
\newcommand{\bF}{\mathbf F}
\newcommand{\bK}{\mathbf K}
\newcommand{\bB}{\mathbf B}
\newcommand{\bP}{\mathbf P}
\newcommand{\bI}{\mathbf I}
\newcommand{\unit}{\ensuremath{\mathbf 1}}
\renewcommand{\mod}{\operatorname{mod}}
\newcommand{\Mod}{\operatorname{Mod}}
\newcommand{\Spc}{\operatorname{Spc}}
\newcommand{\Stab}{\operatorname{Stab}}
\newcommand{\stab}{\operatorname{stab}}
\newcommand{\Inj}{\operatorname{Inj}}
\newcommand{\supp}{\operatorname{supp}}
\newcommand{\opExt}{\operatorname{Ext}}
\newcommand{\leqnomode}{\tagsleft@true}
\newcommand{\reqnomode}{\tagsleft@false}
\newtheorem{theorem}{Theorem}[subsection]
\let\c@fact\c@theorem\makeatother
\let\c@note\c@theorem\makeatother
\newtheorem{lemma}{Lemma}[subsection]
\let\c@lemma\c@theorem\makeatother
\let\c@lemma\c@theorem\makeatother
\let\c@quest\c@theorem\makeatother
\newtheorem{prop}{Proposition}[subsection]
\let\c@prop\c@theorem\makeatother
\newtheorem{conj}{Conjecture}[subsection]
\let\c@conj\c@theorem\makeatother
\let\c@cor\c@theorem\makeatother
\newtheorem{defn}{Definition}[subsection]
\let\c@defn\c@theorem\makeatother
\theoremstyle{definition}
\let\c@remark\c@theorem\makeatother
\let\c@example\c@theorem\makeatother
\numberwithin{equation}{subsection}
\crefname{theorem}{Theorem}{Theorems}
\crefname{fact}{Fact}{Facts}
\crefname{note}{Note}{Notes}
\crefname{lemma}{Lemma}{Lemmas}
\crefname{alg}{Algorithm}{Algorithms}
\crefname{remark}{Remark}{Remarks}
\crefname{example}{Example}{Examples}
\crefname{prop}{Proposition}{Propositions}
\crefname{conj}{Conjecture}{Conjectures}
\crefname{cor}{Corollary}{Corollaries}
\crefname{defn}{Definition}{Definitions}
\crefname{equation}{\!\!}{\!\!} 
\newcounter{listequation}
\begin{document}

\title[Homological Spectrum for Lie superalgebra representations]{The homological spectrum and nilpotence theorems for Lie superalgebra representations}

\begin{abstract}  In the study of cohomology of finite group schemes it is well known that nilpotence theorems play a key role in determining the spectrum of the cohomology ring. 
Balmer recently showed that there is a more general notion of a nilpotence theorem for tensor triangulated categories through the use of homological residue fields and the connection with the 
homological spectrum. The homological spectrum (like the theory of $\pi$-points) can be viewed as a topological space that provides an important realization of the Balmer spectrum. 

Let ${\mathfrak g}={\mathfrak g}_{\0}\oplus {\mathfrak g}_{\1}$ be a classical Lie superalgebra over ${\mathbb C}$. In this paper, the authors consider the 
tensor triangular geometry for the stable category of finite-dimensional Lie superalgebra representations: $\text{stab}({\mathcal F}_{({\mathfrak g},{\mathfrak g}_{\0})})$, 
The localizing subcategories for the detecting subalgebra ${\mathfrak f}$ are classified which answers a question of Boe, Kujawa, and Nakano. As a consequence of these results,  
the authors prove a nilpotence theorem and determine the homological spectrum for the stable module category of ${\mathcal F}_{({\mathfrak f},{\mathfrak f}_{\0})}$. The authors verify Balmer's ``Nerves of Steel'' Conjecture for ${\mathcal F}_{({\mathfrak f},{\mathfrak f}_{\0})}$.

Let $F$ (resp. $G$) be the associated supergroup (scheme) for ${\mathfrak f}$ (resp. ${\mathfrak g}$). Under the condition that $F$ is a splitting subgroup for $G$, the results for the detecting subalgebra can be 
used to prove a nilpotence theorem for $\text{stab}({\mathcal F}_{({\mathfrak g},{\mathfrak g}_{\0})})$, and to determine the homological spectrum in this case. Now using natural assumptions in terms of 
realization of supports, the authors provide a method to explicitly realize the Balmer spectrum of $\text{stab}({\mathcal F}_{({\mathfrak g},{\mathfrak g}_{\0})})$, and prove the 
Nerves of Steel Conjecture in this case. 
\end{abstract}

\author{\sc Matthew H. Hamil}
\address
{Department of Mathematics\\ University of Georgia \\
Athens\\ GA~30602, USA}
\thanks{Research of the first author was supported in part by
NSF grant DMS-2101941}
\email{matthew.hamil25@uga.edu}

\author{\sc Daniel K. Nakano}
\address
{Department of Mathematics\\ University of Georgia \\
Athens\\ GA~30602, USA}
\thanks{Research of the second author was supported in part by
NSF grant DMS-2101941}
\email{nakano@math.uga.edu}

\maketitle

\section{Introduction}

\subsection{}\label{S:homfields} For finite group representations, the detecting nilpotence in cohomology via restriction maps and elementary abelian subgroups is an important idea that was used in the study of support varieties in the work of Quillen, Avrunin and Scott. The spectrum of the cohomology for elementary abelian groups can be described through explicit realizations with polynomial rings and this yields a concrete description of the support varieties 
through a rank variety description. The cohomological nilpotence theorem plays an essential role in the theory because it allows one to describe these support varieties for a finite group with the support varieties for its elementary abelian subgroups. In the case for restricted Lie algebras, Friedlander and Parshall showed that detecting nilpotence entails the use of one-dimensional $p$-nilpotent subalgebras of the Lie algebras. Friedlander and Pevtsova developed a general theory of $\pi$-points that captures both nilpotence and the realization of supports for arbitrary finite group schemes. 

For a small rigid symmetric tensor triangulated category (TTC), $\bf K$, Balmer introduced the concept of homological primes and homological residue fields \cite{Bal20, BC21}. For a TTC, the collection of homological primes, $\text{Spc}^{\text{h}}(\bf K)$, forms a topological space that can potentially realize the Balmer spectrum, $\text{Spc}(\bf K)$,  and its support theory in a concrete way. The central problem in this identification for the theory of homological primes is the following ``Nerves of Steel'' Conjecture (cf. \cite{Bal20}).  

\begin{conj}\label{C:Nos} $\operatorname{[NoS\ Conj]}$ Let $\bf K$ be a small, rigid (symmetric) tensor triangulated category. Then the comparison map $\phi:\operatorname{Spc}^{h}(\bf K)\rightarrow \operatorname{Spc}(\bf K)$ is a bijection. 
\end{conj}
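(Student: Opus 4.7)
The plan is to attack the bijectivity of $\phi$ by separating surjectivity (which is essentially formal) from injectivity (which is the substance of the conjecture). Recall that each homological prime is a maximal Serre tensor-ideal $\mathfrak{B}$ of the abelian category $\mathrm{mod}\text{-}\mathbf{K}$ of finitely presented contravariant additive functors on $\mathbf{K}$, and one sets $\phi(\mathfrak{B})=\{a\in\mathbf{K}\mid \hat{y}(a)\in\mathfrak{B}\}$, where $\hat{y}\colon \mathbf{K}\to\mathrm{mod}\text{-}\mathbf{K}$ is the restricted Yoneda embedding. The target is always a prime tensor-ideal of $\mathbf{K}$, so $\phi$ is well-defined and continuous; it is surjective by Balmer's construction of a homological residue field over each $\mathfrak{P}\in\operatorname{Spc}(\mathbf{K})$ via a suitable Gabriel quotient of $\mathrm{mod}\text{-}\mathbf{K}$. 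So the heart of the matter is injectivity.

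For injectivity, suppose $\mathfrak{B}_{1}\neq \mathfrak{B}_{2}$ are distinct homological primes with $\phi(\mathfrak{B}_{1})=\phi(\mathfrak{B}_{2})=\mathfrak{P}$. Pick a finitely presented $M\in\mathfrak{B}_{1}\setminus\mathfrak{B}_{2}$. First I would push $M$ into the Gabriel quotient $\mathrm{mod}\text{-}\mathbf{K}/\mathfrak{B}_{2}$; by maximality of $\mathfrak{B}_{2}$ its image is a tensor-simple object of the residue field $\bar{A}_{\mathfrak{B}_{2}}$. The aim is to use this simple object to witness a morphism in $\mathbf{K}$ that is simultaneously tensor-invertible modulo $\mathfrak{P}$ through one residue field and tensor-nilpotent through the other, forcing a contradiction. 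Equivalently, one seeks a nilpotence detection statement: a morphism $f$ of $\mathbf{K}$ with $\hat{y}(f)$ zero in every residue field above $\mathfrak{P}$ must satisfy $f^{\otimes n}\in\mathfrak{P}$ for some $n$. Uniqueness of the homological prime above $\mathfrak{P}$ is equivalent to such a detection being realized by a single residue field.

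The central ingredient needed is therefore a \emph{tensor-nilpotence theorem at the abelian level}: for every $\mathfrak{P}\in\operatorname{Spc}(\mathbf{K})$ there is, up to tensor-equivalence, a unique minimal homological residue field lying over $\mathfrak{P}$. I would try to construct this minimal residue field as an appropriate Gabriel localization of $\mathrm{mod}\text{-}\mathbf{K}$ at the multiplicative system $\{\hat{y}(f)\mid f\notin\mathfrak{P}\}$, and then show via a Krull--Schmidt argument in the locally coherent Grothendieck category $\mathrm{Mod}\text{-}\mathbf{K}$ that any other homological prime above $\mathfrak{P}$ is pulled back from this minimal one. Surjectivity of $\phi$ gives the existence; injectivity then follows once uniqueness is in hand.

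The hard part---and what makes this a conjecture rather than a theorem---is precisely this uniqueness of the minimal residue field. In every setting where \cref{C:Nos} is currently known (noetherian schemes, stratified categories, finite group schemes, and the Lie-superalgebraic cases treated in the present paper), uniqueness is ultimately enforced by external geometric input such as rank varieties, $\pi$-points, or cohomological support data that already cut out $\operatorname{Spc}(\mathbf{K})$ set-theoretically. In complete generality no such external input is available, and the plan above reduces \cref{C:Nos} to producing a purely categorical substitute---plausibly an enhanced Brown representability statement, a pure-injective rigidity theorem inside $\mathrm{Mod}\text{-}\mathbf{K}$, or a Matlis-type duality for tensor-triangulated categories. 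Any proof of the full conjecture will, in my estimation, stand or fall on this single step.
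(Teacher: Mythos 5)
You have not proved the statement, and no proof should have been expected: the statement is Balmer's Nerves of Steel Conjecture, which the paper records as an open conjecture and never proves in general. Your proposal is a strategy outline whose decisive step is left open, and you say so yourself in the final paragraph. Concretely, the formal half (well-definedness, continuity, and surjectivity of $\phi$ from rigidity) is fine and agrees with Balmer's framework as recalled in the paper. The gap is the injectivity step: one must show that there is a unique homological prime lying over each Balmer prime $\mathfrak{P}$. Your proposed mechanism --- Gabriel localization of $\mod$-$\bK$ at $\{\hat{y}(f)\mid f\notin\mathfrak{P}\}$ yielding a ``minimal residue field,'' followed by a Krull--Schmidt argument in $\Mod$-$\bK$ showing every homological prime over $\mathfrak{P}$ is pulled back from it --- is not carried out: you give no argument that this localization corresponds to a maximal Serre $\otimes$-ideal, nor that maximality forces uniqueness, and in fact producing such an argument in full generality is precisely the content of the conjecture.

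For comparison, the paper treats the conjecture exactly as a conjecture and only verifies it in restricted settings, always with external input of the kind you correctly identify as necessary. For $\mathfrak{z}=\mathfrak{z}_{\bar 0}\oplus\mathfrak{z}_{\bar 1}$ with $\mathfrak{z}_{\bar 0}$ a torus and $[\mathfrak{z}_{\bar 0},\mathfrak{z}_{\bar 1}]=0$, bijectivity of $\phi$ (Theorem~\ref{T:zstratification}) is deduced from the stratification of $\Stab(\mathcal{C}_{(\mathfrak{z},\mathfrak{z}_{\bar 0})})$ by its cohomology ring, i.e.\ from the classification of localizing subcategories together with pure-injectivity, following Balmer's Example 5.13 and the general statement that stratification implies the conjecture. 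For a classical $\mathfrak{g}$ with a splitting subalgebra, bijectivity (Theorem~\ref{T:capstone}(c)) is obtained by combining the nilpotence theorems (Theorem~\ref{T:nilpotencetwo} and Theorem~\ref{T:SupNilpotence2}), Balmer's criterion that a nilpotence-detecting family of residue fields over the spectrum forces bijectivity, and a support-realization hypothesis, assembled via a commutative diagram comparing the spectra for $\mathfrak{z}$ and $\mathfrak{g}$. So your instinct that ``nilpotence detection over each prime'' is the operative mechanism matches the paper's method in its special cases, but your proposal supplies no purely categorical substitute for the stratification and realization hypotheses, and therefore does not establish the conjecture.
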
 

The [NoS Conj] was verified by using a deep stratification result (see \cite{Bal20, BIKP18}) for the stable module category for finite group schemes. In this setting, the homological primes play the role of the  
$\pi$-points.

More recently, Balmer \cite{Bal20} developed a nilpotence theorem for morphisms in a tensor triangulated category and Balmer and Cameron \cite{BC21} investigated properties of homological residue fields.  Balmer showed that (i) the homological primes naturally surject via the comparison map onto the categorical spectrum and (ii) if a nilpotence theorem holds for homological residue fields then the comparison map is a bijection. In the case of finite group schemes, the picture is complete: the categorical spectrum identifies with projectivization of the cohomology ring and the homological primes with the $\pi$-points (in a non-trivial way). Recent work on the Nerves of Steel Conjecture in relation to stratification and nilpotence can be found in work by Barthel, Castellana, Heard, Naumann, Sanders, and Pol  \cite{BHS23a, BHS23b}, \cite{BCHNP23} \cite{BCHS23}.  

\subsection{} \label{S:superdetecting} Let  ${\mathfrak g}={\mathfrak g}_{\0}\oplus {\mathfrak g}_{\1}$ be a finite-dimensional classical Lie superalgebra with $\text{Lie }G_{\0}={\mathfrak g}_{\0}$ and 
$G_{\0}$ a reductive algebraic group. Moreover, let ${\mathcal F}_{({\mathfrak g},{\mathfrak g}_{\0})}$ be the category of finite-dimensional ${\mathfrak g}$-supermodules that are completely reducible over 
${\mathfrak g}_{\0}$. In the mid 2000s, Boe, Kujawa and Nakano \cite{BKN1}  introduced the notion of a detecting subalgebra for classical simple Lie superalgebras by using geometric invariant theory applied to the $G_{\0}$ action on ${\mathfrak g}_{\1}$. The detecting subalgebras are important because they detect the ${\mathcal F}_{({\mathfrak g},{\mathfrak g}_{\0})}$-cohomology, and can be regarded as the analogs of elementary abelian subgroups. 

There are two families of detecting subalgebras ${\mathfrak e}$ and ${\mathfrak f}$. The detecting subalgebras, ${\mathfrak e}$, were used to provide a geometric interpretation of the well-known combinatorial notion of 
atypicality due to Kac and Wakimoto for basic classical Lie superalgebras. On the other hand, detecting subalgebras, ${\mathfrak f}$, obtained by stable actions, were used to investigate the tensor triangular geometry and 
describe the Balmer spectrum for $\text{stab}({\mathcal F}_{({\mathfrak g},{\mathfrak g}_{\0})})$ for ${\mathfrak g}=\mathfrak{gl}(m|n)$. For any classical simple Lie superalgebra ${\mathfrak g}$, the detecting subalgebras ${\mathfrak f}$ can also be used to form a natural triangular decomposition for ${\mathfrak g}={\mathfrak n}^{+}\oplus {\mathfrak f} \oplus {\mathfrak n}^{-}$ where ${\mathfrak b}={\mathfrak f}\oplus {\mathfrak n}^{-}$ is a BBW parabolic subalgebra. The BBW parabolic subgroups/subalgebras have well-behaved homological properties as demonstrated in \cite{GGNW21}. 

\subsection{} The focus of this paper will be to study the tensor triangular geometry of the stable module category ${\mathcal F}_{({\mathfrak g},{\mathfrak g}_{\0})}$ where 
${\mathfrak g}$ is a classical Lie superalgebra. In particular, we seek to investigate the homological and Balmer spectrum for $\text{stab}({\mathcal F}_{({\mathfrak g},{\mathfrak g}_{\0})})$.
For ${\mathfrak g}=\mathfrak{gl}(m|n)$, the Balmer spectrum was computed by Boe, Kujawa and Nakano using heavy representation theoretic techniques. The approach that we use involves using a circle of ideas 
developed by Benson, Iyengar, and Krause involving the classification of localizing subcategories and by Balmer on homological primes and nilpotence for tensor triangulated categories. 

The strategy first entails classifying localizing subcategories for the detecting subalgebras. This answers a question posed in \cite{BKN4}. This result involving stratification by the cohomology ring leads to 
a nilpotence thoerem for $\text{stab}({\mathcal F}_{({\mathfrak f},{\mathfrak f}_{\0})})$. We then employ the recent work of  Serganova and Sherman \cite{SS22} on the existence of ``splitting subgroups" where one can find a copy of the trivial module in the induction of the trivial module from the splitting subgroup to the ambient supergroup to prove a nilpotence theorem for $\text{stab}({\mathcal F}_{({\mathfrak g},{\mathfrak g}_{\0})})$. 

The nilpotence theorem entails the use of homological residue fields that yield homological primes. In order to show that the homological spectrum identifies with the Balmer spectrum we employ natural assumptions on the realization of supports stated in \cite{BKN4}, which was earlier verified for ${\mathfrak g}=\mathfrak{gl}(m|n)$. This new approach has the appeal that it is much more conceptual, streamlined, and applicable to a wider class of Lie superalgebras. 

\subsection{} The paper is organized as follows. In Section~\ref{S:prelim}, a discussion about classical Lie superalgebras is presented with information about ample detecting subalgebras. Examples are provided that 
illustrate the $G_{\0}$-orbit structure on ${\mathfrak g}_{\pm 1}$ for Type I Lie superalgebras. In the following section (Section~\ref{S:Supp-Proj}) we provide conditions for projectivity for classical Lie superalgebras 
using the detecting subalgebra for finite-dimensional modules and defining the concept of ample detecting subalgebras. For our purposes, stronger detection theorems will be needed to nilpotence theorems with infinitely generated modules which will be stated using the concept of ``splitting subalgebras" as introduced in \cite{SS22}.  

Section~\ref{S:TTG} focuses on Lie superalgebras ${\mathfrak z}$ whose commutator between the even and odd component is zero, and whose even component is a torus. This class of algebras includes all detecting subalgebras. Our analysis concludes with a classification of localizing subcategories for the stable module category of ${\mathcal C}_{({\mathfrak z},{\mathfrak z}_{\bar 0})}$. 

In Section~\ref{S:Nilpotence}, we present several important results on nilpotence theorems for Lie superalgebras (cf. Theorem~\ref{T:nilpotencetwo} and Theorem~\ref{T:SupNilpotence2}). In the following section, 
Section~\ref{S:HomSpec}, the homological spectrum is calculated for Lie superalgebras that contain a splitting subalgebra isomorphic to algebras considered in Section~\ref{S:TTG}. 

Additional conditions are presented in Section~\ref{S:NoSConj}, that allow one to verify the Nerves of Steel Conjecture for these Lie superalgebras (with a splitting subalgebra). Finally, the paper concludes by providing computations of the homological spectrum and the Balmer spectrum for Type A classical Lie superalgebras (that include $\mathfrak{sl}(m|n)$) via our nilpotence theorem in Section~\ref{S:Applications}. 

\vskip .25cm
\noindent {\bf Acknowledgements.}
We acknowledge Tobias Barthel for providing references to his work with his other coauthors with connections to this paper. Moreover, we thank David Benson and Jon Carlson for comments and discussions involving Section~\ref{SS:NilColimit} . 


\section{Preliminaries}\label{S:prelim}

\subsection{Classical Lie Superalgebras} Throughout this paper, let ${\mathfrak g}$ be a Lie superalgebra over the 
complex numbers ${\mathbb C}$ where ${\mathfrak g}={\mathfrak g}_{\0}\oplus {\mathfrak g}_{\1}$ is a $\Z_{2}$-graded vector space
with a supercommutator $[\;,\;]:\g \otimes \g \longrightarrow \g$. Let $U(\g)$ be the universal enveloping superalgebra of ${\mathfrak g}$. The representation theory 
entails (super)-modules which are $\Z_{2}$-graded left $U(\g)$-modules. Given ${\mathfrak g}$-modules $M$ and $N$ 
one can use the antipode and coproduct of $U({\mathfrak g})$ to define a ${\mathfrak g}$-module
structure on the dual $M^{*}$ and the tensor product $M\otimes N$. These constructions are essential for applying the theory of tensor triangular geometry. 
For definitions and detailed background, the reader is referred to \cite{BKN1, BKN2, BKN3, BKN4}. 

A finite dimensional Lie superalgebra ${\mathfrak g}$ is called \emph{classical} if there is a connected reductive algebraic group $G_{\0}$ such
that $\operatorname{Lie}(G_{\0})=\g_{\0},$ and the action of $G_{\0}$ on $\g_{\1}$ differentiates to
the adjoint action of $\g_{\0}$ on $\g_{\1}.$  

We will be interested in studying the category of finite-dimensional and infinite-dimensional 
Lie superalgebra representations. Let ${\mathcal F}_{({\mathfrak g},{\mathfrak g}_{\0})}$ (resp. ${\mathcal C}_{({\mathfrak g},{\mathfrak g}_{\0})})$ be the full subcategory of
finite dimensional (resp. infinite-dimensional) ${\mathfrak g}$-modules which are semisimple over ${\mathfrak g}_{\0}$. A
${\mathfrak g}_{\0}$-module is \emph{semisimple} if it decomposes into a direct sum of finite dimensional simple ${\mathfrak g}_{\0}$-modules.
The categories ${\mathcal F}_{(\g,\g_{\0})}$ and  ${\mathcal C}_{({\mathfrak g},{\mathfrak g}_{\0})}$ have enough projective (and injective) objects. Furthermore,
injectivity is equivalent to projectivity \cite{BKN3} so they are Frobenius categories. Since these categories are Frobenius, one can form the 
stable module categories, $\text{stab}({\mathcal F}_{(\g,\g_{\0})})$ and  $\text{Stab}({\mathcal C}_{({\mathfrak g},{\mathfrak g}_{\0})})$.

\subsection{Detecting Subalgebras} In \cite[Section 3,4]{BKN1}, Boe, Kujawa and Nakano introduced two families of detecting subalgebras that arise for classical simple 
Lie superalgebras. One of these families, often denoted by ${\mathfrak e},$ arises from a polar action of $G_{\0}$ on ${\mathfrak g}_{\1}$. These ${\mathfrak e}$-detecting subalgebras will not be germane to the results of the paper. The other family of detecting subalgebras arises from $G_{\0}$ having a {\em stable} action on ${\mathfrak g}_{\1}$
(cf. \cite[Section 3.2]{BKN1}). Let $x_{0}$ be a generic point in ${\mathfrak g}_{\1}$ (i.e., $x_0$ is semisimple and
regular). Set $H=\text{Stab}_{G_{\0}}x_0$ and $N:=N_{G_{\0}}(H)$.
In order to construct a detecting subalgebra, let
${\mathfrak f}_{\1}={\mathfrak g}_{\1}^{H}$, ${\mathfrak f}_{\0}=[{\mathfrak f}_{\1},{\mathfrak f}_{\1}]$, and set
$${\mathfrak f}={\mathfrak f}_{\0}\oplus {\mathfrak f}_{\1}.$$
Note our definition is slightly modified from the original construction in \cite{BKN1}. 

Then ${\mathfrak f}$ is a classical Lie superalgebra and a sub Lie superalgebra
of ${\mathfrak g}$. The stability of the action of $G_{\0}$ on ${\mathfrak g}_{\1}$ 
implies the following properties.

\begin{itemize}
\item[(2.2.1)] The restriction homomorphism $S(\g_{\1}^{*}) \longrightarrow S(\f_{\1}^{*})$ induces an isomorphism
\[
\operatorname{res}: \text{H}^{\bullet}({\mathfrak g},{\mathfrak g}_{\0},{\mathbb C}) \rightarrow
\text{H}^{\bullet}({\mathfrak f},{\mathfrak f}_{\0},{\mathbb C})^{N}.
\]
\item[(2.2.2)] The set $G_{\0}\cdot {\mathfrak f}_{\1}$ is dense in $\g_{\1}.$
\end{itemize}

Property (2.2.1) will be discussed later in the context of our nilpotence theorem. For much of our work, a stronger version of property (2.2.2) is needed in order to 
prove the nilpotence theorem and compute the Balmer spectrum. 

\subsection{ Type I Lie superalgebras:} A Lie superalgebra is {\em Type I} if
it admits a $\Z$-grading ${\mathfrak g}=\g_{-1}\oplus {\g}_{0}\oplus {\g}_{1}$ concentrated
in degrees $-1,$ $0,$ and $1$ with ${\mathfrak g}_{\0}={\mathfrak g}_{0}$ and
${\mathfrak g}_{\1}=\g_{-1}\oplus {\g}_{1}$. Examples of Type I Lie superalgebras include the general linear Lie superalgebra $\mathfrak{gl}(m|n)$ and simple Lie superalgebras of
types $A(m,n)$, $C(n)$ and $P(n)$. These all have stable actions of $G_{\0}$ on ${\mathfrak g}_{\1}$ which yields Type I detecting subalgebras.

For our paper, it will be important to distinguish Type I classical Lie superalgebras that contain a detecting subalgebra with favorable geometric properties. 

\begin{defn} Let ${\mathfrak g}$ be a Type I classical Lie superalgebra with a detecting subalgebra
${\mathfrak f}={\mathfrak f}_{-1}\oplus {\mathfrak f}_{0} \oplus {\mathfrak f}_{1}$. Then ${\mathfrak f}$ is an 
{\em ample detecting subalgebra} if ${\mathfrak g}_{j}=G_{0}\cdot {\mathfrak f}_{j}$ for $j=-1,1$. 
\end{defn} 
 
As we will see in next section, there is an abundance of examples of Type I Lie superalgebras with ample detecting subalgebras that encompass many cases of simple Lie superalgebras over ${\mathbb C}$.

\subsection{Examples of  Lie Superalgebras with Ample Detecting Subalgebras} 

In this section, we will provide examples of Type I classical Lie superalgebras that contain an ample detecting subalgebra. Many of these actions involving $G_{0}$ on 
${\mathfrak g}_{\pm1}$ arise naturally in the context of linear algebra. For a more detailed description of these actions, the reader is referred to \cite[Section 3.8]{BKN3}.

\subsubsection{General Linear Superalgebra:} Let ${\mathfrak g}={\mathfrak gl}(m|n)$. As a vector space this is 
isomorphic to the set of $m+n$ by $m+n$ matrices. For a basis, one can take the elementary matrices $E_{i,j}$ where $1\leq i,j \leq m+n$. The degree zero component
${\mathfrak g}_{0}\cong \mathfrak{gl}(m)\times \mathfrak{gl}(n)$ with corresponding reductive group $G_{0}\cong GL(m)\times GL(n)$. 

Constructions of detecting subalgebras for classical Lie superalgebras are explicitly described in
\cite[Section 8]{BKN1}. Set $r=\text{min}(m,n)$. A detecting subalgebra is given by $\f=\f_{-1}\oplus \f_{\0}\oplus \f_{1}$ where
$\f_{-1}$ is the span of $\{E_{m+i,i}:\  i=1,2,\dots,r\}$, $\f_{1}$ is the span of $\{E_{i,m+i}:\  i=1,2,\dots,r\}$, and $\f_{\0}=[{\mathfrak f}_{\1},{\mathfrak f}_{1}]$. 

The action of $G_{0}$ on $\g_{-1}$ is given by $(A,B).X=BXA^{-1}$ and on $\g_{1}$ by
$(A,B).X=AXB^{-1}$. It is a well-known fact from linear algebra that the orbits representatives are the matrices of a given rank in $\g_{\pm 1}$.
It follows that ${\mathfrak g}_{\pm1}=G_{0}\cdot {\mathfrak f}_{\pm 1}$, and ${\mathfrak f}$ is an ample detecting subalgebra. 

\subsubsection{Other Type A Lie Superalgebras} The other Type A Lie superalgebras ${\mathfrak g}$ are all Type I, and 
have ${\mathfrak g}_{\pm 1}\cong {\mathfrak gl}(m|n)_{\pm 1}$. Furthermore, one has ${\mathfrak f}$ as given above for ${\mathfrak gl}(m|n)$ as a subalgebra of 
${\mathfrak g}$ \cite[Section 3.8.2 and 3.8.3]{BKN3}. 

When $m\neq n$, $\fg = \mathfrak{sl}(m|n) \subseteq \gl (m|n)$ consists of the matrices of supertrace zero, and 
\[
G_{0} = \left\{(A,B) \in GL(m) \times GL(n) \mid \det (A) \det (B)^{-1}=1 \right\},
\]
The $G_{\0}$-orbits are the same as the $GL(m) \times GL(n)$-orbits, and ${\mathfrak f}$ is an ample detecting subalgebra. 

For the Lie superalgebra $\mathfrak{sl}(n|n)$ has a one dimensional center given by scalar multiples of the identity matrix, and one has  
\[
G_{0} \cong \left\{(A,B) \in GL(n) \times GL(n) \mid \det(A)\det(B)^{-1}=1 \right\}.
\]  For elements of $\fg_{\pm 1}$ with rank strictly less than $n$, the $G_{0}$-orbits coincide with the $GL(n) \times GL(n)$-orbits.  The orbits of full rank matrices form a one parameter family 
with each orbit containing a unique matrix which is a scalar multiple of the identity. The orbit theory for the ${\mathfrak g}=\mathfrak{psl}(n|n)$ case is analogous to $\mathfrak{sl}(n|n)$. 
Consequently, in both these setting the algebra ${\mathfrak f}$ is ample.

\subsubsection{Type C Lie Superalgebras which are Type I} In this case $\fg = \mathfrak{osp}(2|2n)$ with  $G_{\0} \cong \C^{\times} \times Sp(2n)$. One has $\g_{1}\cong V_{2n}$, the natural module for $Sp(2n)$. 
The action of $Sp(2n)$ is transitive on $V_{2n}\smallsetminus\{0\}$. One has an explicit detecting subalgebra ${\mathfrak f}=\f_{-1}\oplus \f_{\0}\oplus \f_{1}$ where $\dim {\mathfrak f}_{\pm1}=1$. The transitivity of the 
action of $G_{\0}$ on ${\mathfrak g}_{1}$ shows that ${\mathfrak g}_{1}=G_{0}\cdot {\mathfrak f}_{1}$. A similar argument demonstrates that ${\mathfrak g}_{-1}=G_{0}\cdot {\mathfrak f}_{-1}$.

\subsubsection{Type P Lie Superalgebras} For Type P Lie superalgebras ${\mathfrak g}=\tilde{\mathfrak{p}}(n)$ and ${\mathfrak g}=\tilde{\mathfrak{p}}(n)$ one 
has an explicit detecting subalgebra ${\mathfrak f}=\f_{-1}\oplus \f_{\0}\oplus \f_{1}$ where ${\mathfrak f}_{\pm 1}$ contains matrices of all possible ranks. 

Let $\fg = \tilde{\mathfrak{p}}(n)$. Then $G_{\0} \cong GL(n)$ and $g_{-1} 
\cong \Lambda^{2}(V^{*})$ and $g_{1} \cong S^{2}(V)$ as $G_{\0}$-modules, where $V$ denotes the natural $GL(n)$-module. There are a finite number of 
orbits given again by the condition on rank, and their closure relation forms a chain.  This shows that ${\mathfrak f}$ is ample.

Now let $\fg = \mathfrak{p}(n) = [\tilde{\mathfrak{p}}(n),\tilde{\mathfrak{p}}(n)]$ be the simple Lie superalgebra of type $P(n-1)$.  One has $\fg_{-1}$ and $\fg_{1}$ are as above but $G_{\0} \cong SL(n)$.  
This case follows the paradigm of  $\mathfrak{sl}(n|n)$.  The $GL(n)$-orbits corresponding to matrices of rank less than $n$ in ${\mathfrak g}_{\pm1}$ are also $G_{\0}$-orbits. The matrices of rank $n$ yield a one parameter family of orbits that have orbit representatives in ${\mathfrak f}_{\pm 1}$, which demonstrates the ampleness of ${\mathfrak f}$. 

\section{Supports and Projectivity}\label{S:Supp-Proj}

\subsection{Cohomological and Rank Varieties} We review  the constructions in \cite[Section 3.2]{BKN3} for Type I Lie superalgebras. Let 
${\mathfrak g}=\fg_{-1}\oplus {\mathfrak g}_{\0} \oplus \fg_{1}$ be a Type I Lie superalgebra. Then $\fg_{\pm 1}$ are abelian Lie superalgebras. Consequently, 
$U({\mathfrak g}_{\pm1})$ identifies with an exterior algebra, and the cohomology ring for these superalgebras identifies with the 
symmetric algebra on the dual of $\fg_{\pm 1}$. Set  $R_{\pm 1}=\HH^{\bullet} (\fg_{\pm 1}, \C ) \cong S^{\bullet}(\fg_{\pm 1}^{*})$. Let $M$ be a finite-dimensional $U({\mathfrak g}_{\pm 1}))$-module and let  
\[
J_{M} = \left\{r \in R_{\pm 1} \mid r.m=0 \text{ for all } m \in \Ext_{U({\mathfrak g}_{\pm 1})}^{\bullet}(M,M) \right\}
\] 
The (cohomological) {\em support variety} of $M$ is defined as 
\[
\V_{\fg_{\pm 1}}(M) = \operatorname{MaxSpec}\left(R_{\pm 1}/J_{M} \right).
\]  
Moreover, the support variety $\V_{\fg_{\pm 1}}(M)$ is canonically isomorphic to the following rank variety: 
\[
\V_{\fg_{\pm 1}}^{\text{rank}}(M) =\left\{ x \in \fg_{\pm 1} \mid M \text{ is not projective as a $U(\langle x \rangle)$-module} \right\} \cup \{ 0 \}. 
\]
These varieties satisfy many of the important properties of support theory that include (i) the detection of projectivity over $U({\mathfrak g}_{\pm 1})$ and (ii) the tensor product property. 

For a detecting subalgebra ${\mathfrak f}={\mathfrak f}_{-1}\oplus {\mathfrak f}_{0} \oplus {\mathfrak f}_{1}$, one can apply the prior construction to obtain support varieties for 
$M\in {\mathcal F}_{({\mathfrak f},{\mathfrak f}_{0})}$, namely $\V_{{\mathfrak f}_{\pm 1}}(M)$ and $\V^{\text{rank}}_{{\mathfrak f}_{\pm 1}}(M)$. 

\subsection{Projectivity for Type I Lie Superalgebras}\label{SS:tilting} For Type I classical Lie superalgebras, one can construct Kac and dual Kac modules (cf. \cite[Section 3.1]{BKN3}). A module in 
${\mathcal F}_{({\mathfrak g},{\mathfrak g}_{0})}$ is {\em tilting} if and only if it has both a Kac and a dual Kac filtration. The use of these filtrations was a key idea in proving the following criteria for projectivity in 
the category ${\mathcal F}_{({\mathfrak g},{\mathfrak g}_{0})}$ (see \cite[Section 3]{BKN3}). 

\begin{theorem}\label{T:projective} Let ${\mathfrak g}$ be a Type I classical Lie superalgebra and $M\in {\mathcal F}_{({\mathfrak g},{\mathfrak g}_{0})}$. The following are equivalent.
\begin{itemize} 
\item[(a)] $M$ is a projective module in ${\mathcal F}_{({\mathfrak g},{\mathfrak g}_{0})}$
\item[(b)] $M$ is a tilting module 
\item[(c)] $\V_{\fg_{1}}(M) =\{0\}$ and $\V_{\fg_{-1}}(M) =\{0\}$
\item[(d)] $\V^{\operatorname{rank}}_{\fg_{1}}(M) = \{0\}$ and $\V^{\operatorname{rank}}_{\fg_{-1}}(M) =\{0\}$
\end{itemize} 
\end{theorem}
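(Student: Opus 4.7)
The plan is to establish the cycle (a) $\Rightarrow$ (c) $\Rightarrow$ (b) $\Rightarrow$ (a) together with the equivalence (c) $\Leftrightarrow$ (d), which was already recorded in the preceding discussion as the canonical isomorphism between cohomological and rank varieties on each abelian piece $\fg_{\pm 1}$. So (c) $\Leftrightarrow$ (d) is essentially a citation, and the real content is in threading (a), (b), (c) together.

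For (a) $\Rightarrow$ (c), I would argue that every projective object in $\mathcal{F}_{(\fg,\fg_{\0})}$ is a summand of an object of the form $U(\fg)\otimes_{U(\fg_{\0})}N$ with $N$ a finite-dimensional semisimple $\fg_{\0}$-module. Since $\fg$ is Type I, the PBW decomposition $U(\fg)\cong U(\fg_{-1})\otimes U(\fg_{\0})\otimes U(\fg_{1})$ shows that the restriction of such a module to $U(\fg_{\pm 1})$ is free. Consequently the restriction of any projective to $U(\fg_{\pm 1})$ is projective, which forces $\V_{\fg_{\pm 1}}(M)=\{0\}$ by the detection-of-projectivity property recorded in 3.1.

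The heart of the proof is (c) $\Rightarrow$ (b). Here I would translate the vanishing of $\V_{\fg_{1}}(M)$ into vanishing of the higher derived functors of induction from the parabolic $\fg_{\0}\oplus\fg_{1}$ up to $\fg$ — equivalently, vanishing of $\Ext^{>0}_{U(\fg_{1})}(\mathbb{C},M)$, via the standard rank-variety/Koszul identification of $\HH^{\bullet}(\fg_{1},M)$ with $\Ext$ over the exterior algebra. This vanishing produces a Kac filtration on $M$ by induction on the length of a $\fg_{\0}$-composition series, lifting each simple $\fg_{\0}$-head summand to a Kac module. Dualizing (equivalently, running the same argument with $\fg_{-1}$ in place of $\fg_{1}$) produces a dual Kac filtration from $\V_{\fg_{-1}}(M)=\{0\}$. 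Having both filtrations is, by definition, the tilting property, giving (b).

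For (b) $\Rightarrow$ (a), I would use the standard characterization: in a highest weight category where tilting modules have both standard (Kac) and costandard (dual Kac) filtrations, a tilting object that is simultaneously $\Delta$- and $\nabla$-filtered satisfies $\Ext^{1}(-,M)=0$ on all Kac modules and $\Ext^{1}(M,-)=0$ on all dual Kac modules, which by a dimension shift gives injectivity and, since injectivity equals projectivity in $\mathcal{F}_{(\fg,\fg_{\0})}$, finishes the cycle. The main obstacle, as usual in this subject, is the implication (c) $\Rightarrow$ (b): producing an actual Kac filtration from a purely cohomological vanishing requires a careful inductive construction that lifts $\fg_{\0}$-composition factors through the parabolic induction functor, and it is where the Type I hypothesis (so that $\fg_{1}$ and $\fg_{-1}$ are abelian and $U(\fg_{\pm 1})$ are exterior algebras) is used most essentially.
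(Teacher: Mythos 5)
The paper does not actually prove this theorem; it records it as a citation to \cite[Section 3]{BKN3}, so you are reconstructing an argument that the paper leaves to Boe--Kujawa--Nakano. Your outline for (a)~$\Rightarrow$~(c), (c)~$\Leftrightarrow$~(d), and the broad strokes of (c)~$\Rightarrow$~(b) are in the right spirit of \cite{BKN3}: using PBW to see projectives restrict freely to $U(\fg_{\pm 1})$, the rank/cohomological variety identification, and the translation of $U(\fg_{\pm 1})$-projectivity into higher derived functors of (co)induction from the Type I parabolics via Frobenius reciprocity, which feeds the homological criterion for Kac/dual Kac filtrations.

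However, there is a genuine gap in your (b)~$\Rightarrow$~(a). You claim that from $\opExt^1(K(\lambda),M)=0$ for all $\lambda$ and $\opExt^1(M,K'(\mu))=0$ for all $\mu$ ``a dimension shift gives injectivity.'' This does not go through. In a generic highest weight category, a module with both a standard and a costandard filtration is tilting but emphatically need not be projective or injective (think of the tilting modules in category $\mathcal O$), and the vanishing $\opExt^1(K(\lambda),M)=0$ for all $\lambda$ gives no control over $\opExt^1(X,M)$ for arbitrary $X$: the long exact sequence for $0 \to N(\mu) \to K(\mu) \to L(\mu) \to 0$ only exhibits $\opExt^1(L(\mu),M)$ as a quotient of $\Hom(N(\mu),M)$, which is typically nonzero. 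The dimension-shift would require that every object of ${\mathcal F}_{(\fg,\fg_{\0})}$ admit a \emph{finite} resolution by Kac modules, which fails already in simple examples (e.g.\ for atypical weights in $\mathfrak{gl}(1|1)$). The fact that tilting $=$ projective in ${\mathcal F}_{(\fg,\fg_{\0})}$ for Type I superalgebras is a substantive structural theorem --- essentially a Ringel self-duality / ``projectives have dual Kac filtrations because injectives coincide with projectives'' statement --- and is precisely what \cite{BKN3} establishes; it cannot be obtained by a formal homological-algebra shift. To close your cycle you would need to cite or reprove that result, or avoid (b) altogether by arguing (c)~$\Rightarrow$~(a) directly via the structure of projective covers.
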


It should be noted that for an arbitrary (infinitely generated) module $M\in  {\mathcal C}_{({\mathfrak g},{\mathfrak g}_{0})}$, one can have projectivity over $U({\mathfrak g}_{\pm 1})$, but 
$M$ may not be projective in ${\mathcal C}_{({\mathfrak g},{\mathfrak g}_{0})}$. For example, if ${\mathfrak g}=\mathfrak{gl}(1|1)$, one can take an infinite coproduct of projective modules in the principal block
$P=\oplus_{m\in {\mathbb Z}} P(m|-m)$. By making suitable identifications, one can form an (infinite) ``zigzag module" (of radical length $2$) that has a Kac and dual Kac filtration, which is projective upon 
restriction to $U({\mathfrak g}_{\pm 1})$. The zigzag module is not projective in ${\mathcal C}_{({\mathfrak g},{\mathfrak g}_{0})}$ because it has radical length less than $4$. This construction can also be performed for projective modules in the principal block for the restricted enveloping algebra of ${\mathfrak sl}_{2}$ (cf. \cite{Po68}), and has been observed in other situations by Cline, Parshall and Scott \cite[(3.2) Example]{CPS85}.

\subsection{Projectivity via Ample Detecting Subalgebras} The following theorem allows us connect projectivity of a module in ${\mathcal F}_{({\mathfrak g},{\mathfrak g}_{0})}$ to 
projectivity when restricting the module to the detecting subalgebra.

\begin{theorem}\label{projectivitythm}
 Let ${\mathfrak g}$ be a Type I classical Lie superalgebra with an ample detecting subalgebra ${\mathfrak f}$ and $M\in {\mathcal F}_{({\mathfrak g},{\mathfrak g}_{0})}$. Then 
$M$ is projective in ${\mathcal F}_{({\mathfrak g},{\mathfrak g}_{0})}$ if and only if $M$ is projective in ${\mathcal F}_{({\mathfrak f},{\mathfrak f}_{0})}$.
\end{theorem}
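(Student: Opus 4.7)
The plan is to use \cref{T:projective} (the rank-variety characterization of projectivity) to reduce the statement to a comparison of the rank varieties computed over $\mathfrak{g}_{\pm 1}$ and $\mathfrak{f}_{\pm 1}$, and then exploit the ampleness hypothesis $\mathfrak{g}_j = G_0 \cdot \mathfrak{f}_j$ together with the $G_0$-integrability of modules in $\mathcal{F}_{(\mathfrak{g},\mathfrak{g}_0)}$ to compare one-dimensional restrictions along $G_0$-orbits.

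For the forward direction, suppose $M$ is projective in $\mathcal{F}_{(\mathfrak{g},\mathfrak{g}_0)}$. By \cref{T:projective} (d) applied to $\mathfrak{g}$, $\V^{\operatorname{rank}}_{\mathfrak{g}_{\pm 1}}(M) = \{0\}$. Since $\mathfrak{f}_{\pm 1} \subseteq \mathfrak{g}_{\pm 1}$, the trivial inclusion $\V^{\operatorname{rank}}_{\mathfrak{f}_{\pm 1}}(M) \subseteq \V^{\operatorname{rank}}_{\mathfrak{g}_{\pm 1}}(M) \cap \mathfrak{f}_{\pm 1}$ immediately gives $\V^{\operatorname{rank}}_{\mathfrak{f}_{\pm 1}}(M) = \{0\}$. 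Semisimplicity of $M$ over $\mathfrak{f}_0$ follows from $M$ being a rational $G_0$-module restricted to the (reductive) subgroup integrating $\mathfrak{f}_0 = [\mathfrak{f}_1,\mathfrak{f}_{-1}]$, so \cref{T:projective} applied to the Type I superalgebra $\mathfrak{f}$ yields that $M$ is projective in $\mathcal{F}_{(\mathfrak{f},\mathfrak{f}_0)}$.

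For the reverse direction, assume $M$ is projective in $\mathcal{F}_{(\mathfrak{f},\mathfrak{f}_0)}$, so that $\V^{\operatorname{rank}}_{\mathfrak{f}_{\pm 1}}(M) = \{0\}$ by \cref{T:projective}. I want to show $\V^{\operatorname{rank}}_{\mathfrak{g}_{\pm 1}}(M) = \{0\}$. Fix $0 \neq x \in \mathfrak{g}_1$; ampleness provides $g \in G_0$ and $y \in \mathfrak{f}_1$ with $x = g \cdot y$. Because $M$ is $G_0$-integrable, the map $\phi_g : M \to M$, $m \mapsto g \cdot m$, is a linear isomorphism satisfying $\phi_g(z \cdot m) = (g \cdot z) \cdot \phi_g(m)$ for all $z \in \mathfrak{g}_1$. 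In particular, $\phi_g$ intertwines the $U(\langle y \rangle)$-action on $M$ with the $U(\langle x \rangle)$-action along the algebra isomorphism $U(\langle y \rangle) \xrightarrow{\sim} U(\langle x \rangle)$, $y \mapsto x$. Hence $M$ is projective over $U(\langle x \rangle)$ if and only if it is projective over $U(\langle y \rangle)$, and the latter holds since $y \in \mathfrak{f}_1$ and $\V^{\operatorname{rank}}_{\mathfrak{f}_1}(M) = \{0\}$. Thus $\V^{\operatorname{rank}}_{\mathfrak{g}_1}(M) = \{0\}$, and the identical argument with $\mathfrak{g}_{-1}$ gives $\V^{\operatorname{rank}}_{\mathfrak{g}_{-1}}(M) = \{0\}$. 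Applying \cref{T:projective} to $\mathfrak{g}$ then yields projectivity of $M$ in $\mathcal{F}_{(\mathfrak{g},\mathfrak{g}_0)}$.

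The substantive point — and the only place where the ampleness hypothesis is used — is the $G_0$-conjugation argument identifying $U(\langle x \rangle)$-projectivity with $U(\langle y \rangle)$-projectivity. I expect this to be the main obstacle only insofar as one must verify that the twist $\phi_g$ really produces an isomorphism of $U(\langle x \rangle)$-modules (rather than merely a Morita-type equivalence); this is a formal consequence of $G_0$-integrability, but it is worth writing out explicitly to avoid ambiguity between the adjoint action on $\mathfrak{g}_1$ and the module action on $M$. Once that identification is in hand, the rest of the proof reduces to bookkeeping with \cref{T:projective}.
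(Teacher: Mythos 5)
Your proof is correct and follows the same approach as the paper: reduce to support varieties via \cref{T:projective} and exploit ampleness together with $G_0$-stability to pass between $\mathfrak{g}_{\pm 1}$ and $\mathfrak{f}_{\pm 1}$. The paper phrases the reverse direction as ``$\V_{\mathfrak{g}_1}(M)$ is $G_0$-stable since $M$ is a rational $G_0$-module,'' while you unwind this into the explicit conjugation isomorphism $\phi_g$ on the rank-variety side — same idea, just made concrete.
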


\begin{proof} Let $M$ be projective in  ${\mathcal F}_{({\mathfrak g},{\mathfrak g}_{0})}$. Then by Theorem~\ref{T:projective}, $\V_{\fg_{\pm 1}}(M) =\{0\}$. It follows that 
$\V_{{\mathfrak f}_{\pm1}}(M) =\{0\}$ and by Theorem~\ref{T:projective}, $M$ is a projective module in ${\mathcal F}_{({\mathfrak f},{\mathfrak f}_{0})}$.  

Conversely, assume that $M$ is a projective module in ${\mathcal F}_{({\mathfrak f},{\mathfrak f}_{0})}$. Then $\V_{{\mathfrak f}_{\pm1}}(M) =\{0\}$. Let $y\in \V_{{\mathfrak g}_{1}}(M)$. 
Then $y=g\cdot x$ where $g\in G_{0}$ and $x\in {\mathfrak f}_{1}$ since ${\mathfrak f}$ is an ample detecting subalgebra. Since $M$ is a rational $G_{0}$-module, $\V_{{\mathfrak g}_{1}}(M)$ is 
$G_{0}$-stable. This implies that $x\in \V_{{\mathfrak g}_{1}}(M)$ and $x\in \V_{{\mathfrak f}_{1}}(M)$, and $x=0$. Consequently, $\V_{{\mathfrak g}_{1}}(M)=\{0\}$, and by the same reasoning 
$\V_{{\mathfrak g}_{-1}}(M)=\{0\}$. One can now conclude by Theorem~\ref{T:projective} that $M$ is a projective module in ${\mathcal F}_{({\mathfrak g},{\mathfrak g}_{0})}$.
\end{proof}

\subsection{Splitting Subalgebras} For our purposes we will need a stronger projectivity criterion than the one given in Theorem~\ref{projectivitythm} that can be applied to infinite-dimensional modules. 
In order to state such a projectivity criterion, one need to employ the concept of splitting subgroups/subalgebras as introduced by Serganova and Sherman \cite{SS22}. The following definition which fits 
in our context is equivalent to their definition. 

\begin{defn} Let ${\mathfrak g}={\mathfrak g}_{\0}\oplus {\mathfrak g}_{\1}$ be a classical Lie superalgebra and $G$ be a Lie supergroup scheme with $\operatorname{Lie }G={\mathfrak g}$. Moreover, 
let $Z\leq G$ be a subgroup with ${\mathfrak z}={\mathfrak z}_{\0}\oplus {\mathfrak z}_{\1}$ being classical and $\operatorname{Lie }Z={\mathfrak z}$. 
Then ${\mathfrak z}$ is a {\em splitting subalgebra} if and only if the trivial module ${\mathbb C}$ is a direct summand of $\operatorname{ind}_{Z}^{G} {\mathbb C}$. 
\end{defn} 

The following theorem summarizes results in \cite[Section 2]{SS22}. We present a slightly different approach using the work for BBW parabolic subgroups by D. Grantcharov, N. Grantcharov, Nakano and Wu 
\cite{GGNW21}. 

\begin{theorem}\label{T:splittingprop} Let ${\mathfrak g}$ be a classical Lie superalgebra and ${\mathfrak z}$ be a splitting subalgebra. Let $M$, $N$ be modules in ${\mathcal C}_{({\mathfrak g},{\mathfrak g}_{\0})}$.
\begin{itemize} 
\item[(a)] $R^{j}\operatorname{ind}_{Z}^{G}{\mathbb C}=0$ for $j>0$. 
\item[(b)] $M$ is projective in ${\mathcal C}_{({\mathfrak g},{\mathfrak g}_{\0})}$  if and only if $M$ when restricted to ${\mathfrak z}$ is projective in ${\mathcal C}_{({\mathfrak z},{\mathfrak z}_{\0})}$.  
\item[(c)] For all $n\geq 0$, $\opExt^{n}_{({\mathfrak g},{\mathfrak g}_{\0})}(M,N \otimes \operatorname{ind}_{Z}^{G}{\mathbb C})\cong \opExt^{n}_{({\mathfrak z},{\mathfrak z}_{\0})}(M,N)$. 
\item[(d)] For all $n\geq 0$, the restriction map $\operatorname{res}:\opExt_{({\mathfrak g},{\mathfrak g}_{\0})}^{n}(M,N)\rightarrow \opExt^{n}_{({\mathfrak z},{\mathfrak z}_{\0})}(M,N)$ is injective. 
\end{itemize} 
\end{theorem}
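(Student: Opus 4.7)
The plan is to prove the four parts in the order (a), (c), (d), (b), since each subsequent item leans on the previous ones. The setup throughout is that $\mathbb{C}$ splits off $\operatorname{ind}_Z^G\mathbb{C}$ as a $G$-submodule, so fix a $G$-equivariant decomposition $\operatorname{ind}_Z^G\mathbb{C}\cong \mathbb{C}\oplus X$ and let $\iota\colon \mathbb{C}\hookrightarrow \operatorname{ind}_Z^G\mathbb{C}$ be the unit of the adjunction $\operatorname{res}\dashv \operatorname{ind}$.

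For (a), the splitting hypothesis alone does not yield vanishing of the higher derived functors, so I would invoke the structural input from \cite{GGNW21}. The point is that the splitting subalgebras appearing in our setting fit into a BBW-type triangular decomposition $\mathfrak{g}=\mathfrak{n}^+\oplus \mathfrak{z}\oplus \mathfrak{n}^-$ (as noted in Section~\ref{S:superdetecting}), and the PBW theorem gives an identification $U(\mathfrak{g})\cong U(\mathfrak{n}^-)\otimes U(\mathfrak{z})\otimes U(\mathfrak{n}^+)$ as super vector spaces. A Koszul-type resolution built from the complement to $\mathfrak{z}$ computes $R^\bullet \operatorname{ind}_Z^G\mathbb{C}$, and the BBW parabolic results of \cite{GGNW21} identify the higher terms with cohomology of $\mathfrak{n}^\pm$ against a module that is cohomologically trivial in positive degrees. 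This is the main obstacle: verifying that the Serganova--Sherman splitting hypothesis is strong enough to guarantee this vanishing, possibly by combining it with the GGNW framework or by pulling the argument directly from \cite[Section~2]{SS22}.

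With (a) in hand, part (c) follows by a standard tensor identity plus Frobenius reciprocity. Since $N$ is already a $G$-module, the projection formula gives $N\otimes \operatorname{ind}_Z^G\mathbb{C}\cong \operatorname{ind}_Z^G(N|_{\mathfrak{z}})$ naturally in $N$. Combined with the Grothendieck spectral sequence for $\operatorname{Ext}_{(\mathfrak{g},\mathfrak{g}_{\bar 0})}^n(M,-)\circ \operatorname{ind}_Z^G$, the vanishing from (a) collapses the spectral sequence and yields
\[
\operatorname{Ext}^n_{(\mathfrak{g},\mathfrak{g}_{\bar 0})}(M, N\otimes \operatorname{ind}_Z^G\mathbb{C})\cong \operatorname{Ext}^n_{(\mathfrak{g},\mathfrak{g}_{\bar 0})}(M, \operatorname{ind}_Z^G(N|_{\mathfrak{z}}))\cong \operatorname{Ext}^n_{(\mathfrak{z},\mathfrak{z}_{\bar 0})}(M|_{\mathfrak{z}},N|_{\mathfrak{z}}).
\]
Part (d) then falls out: the restriction map is induced by $\iota$, so it factors as the unit-induced map $\operatorname{Ext}^n_{(\mathfrak{g},\mathfrak{g}_{\bar 0})}(M,N)\to \operatorname{Ext}^n_{(\mathfrak{g},\mathfrak{g}_{\bar 0})}(M, N\otimes \operatorname{ind}_Z^G\mathbb{C})$ followed by the isomorphism from (c). Because $\iota$ is split, tensoring by $N$ keeps it split, so the first map is split injective and the composition is injective.

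Finally, for (b), the forward direction uses PBW: $U(\mathfrak{g})$ is free as a $U(\mathfrak{z})$-module (on a basis of the exterior algebra on the complement of $\mathfrak{z}_{\bar 1}$ in $\mathfrak{g}_{\bar 1}$, times a polynomial piece on the even complement), and in the relative categories $\mathcal{C}_{(\mathfrak{g},\mathfrak{g}_{\bar 0})}$ and $\mathcal{C}_{(\mathfrak{z},\mathfrak{z}_{\bar 0})}$ projectivity is detected by relative Ext; restriction along a free extension preserves this. For the converse, suppose $M|_{\mathfrak{z}}$ is projective. By (c) we have $\operatorname{Ext}^n_{(\mathfrak{g},\mathfrak{g}_{\bar 0})}(M, N\otimes \operatorname{ind}_Z^G\mathbb{C})=0$ for all $n>0$ and all $N$, and since $N$ is a direct summand of $N\otimes\operatorname{ind}_Z^G\mathbb{C}$ (tensor the splitting $\mathbb{C}\oplus X\cong \operatorname{ind}_Z^G\mathbb{C}$ with $N$), this forces $\operatorname{Ext}^n_{(\mathfrak{g},\mathfrak{g}_{\bar 0})}(M,N)=0$ for all $n>0$, so $M$ is projective. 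The hard technical step in the whole package is (a); the remaining parts are formal once vanishing of $R^j\operatorname{ind}_Z^G\mathbb{C}$ is established.
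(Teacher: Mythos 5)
Your proposal matches the paper on parts (c) and (d): both use the projection formula $N\otimes\operatorname{ind}_Z^G\mathbb{C}\cong\operatorname{ind}_Z^G(N|_{\mathfrak z})$, the Grothendieck spectral sequence collapsing via (a), and the observation that restriction is realized as a split piece of the isomorphism in (c). Your version of the converse of (b) (tensor the splitting $\mathbb{C}\oplus X\cong\operatorname{ind}_Z^G\mathbb{C}$ with $N$ to see $N$ as a summand of $N\otimes\operatorname{ind}_Z^G\mathbb{C}$, then appeal to (c)) is equivalent to the paper's argument, which phrases the same fact as ``$M$ is a summand of $\operatorname{ind}_Z^G(M|_Z)$, and the latter is injective.''

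The genuine gap is part (a), which you yourself flag as ``the main obstacle,'' and the route you sketch is both harder and less general than what is actually needed. You propose to use the BBW triangular decomposition $\mathfrak g=\mathfrak n^+\oplus\mathfrak z\oplus\mathfrak n^-$ and a Koszul-type resolution to compute $R^\bullet\operatorname{ind}_Z^G\mathbb{C}$; but the theorem is stated for an \emph{arbitrary} splitting subalgebra $\mathfrak z$, and as the paper points out, splitting subalgebras exist (e.g.\ for Type~Q) that are not detecting subalgebras and need not sit in a BBW triangular decomposition, so that route cannot prove the result in the stated generality. The actual argument is far simpler and avoids any resolution: the homogeneous space $G/Z$ is affine (this is essentially the content of \cite[Lemma~2.1]{SS22}), hence $\operatorname{ind}_Z^G$ is exact, hence $R^j\operatorname{ind}_Z^G\mathbb{C}=0$ for $j>0$. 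Note also that the splitting hypothesis itself plays no role in (a); it is the affineness of $G/Z$, a structural feature of the subgroup, that does all the work. Once you replace your proposed Koszul argument with that one-line affineness observation, your treatment of (b)--(d) goes through.

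A secondary soft spot is your forward direction of (b). You invoke PBW freeness of $U(\mathfrak g)$ over $U(\mathfrak z)$, but relative projectivity in $\mathcal C_{(\mathfrak g,\mathfrak g_{\bar0})}$ is not absolute $U(\mathfrak g)$-projectivity, and tracking the relative structure under a free ring extension requires some care for a general splitting $\mathfrak z$ (you would need to control how relative projectives restrict, not just how $U(\mathfrak g)$-projectives restrict). The paper's approach sidesteps this: affineness of $G/Z$ already implies that restriction $\operatorname{Mod}(G)\to\operatorname{Mod}(Z)$ preserves injectives (equivalently projectives, since both categories are Frobenius), with no appeal to a triangular decomposition.
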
 

\begin{proof} First observe that the category ${\mathcal C}_{({\mathfrak g},{\mathfrak g}_{\0})}$ (resp. ${\mathcal C}_{({\mathfrak z},{\mathfrak z}_{\0})}$) is equivalent to 
the rational category $\text{Mod}(G)$ (resp. $\text{Mod}(Z)$). The categories are Frobenius which means that projectivity is equivalent to injectivity. Furthermore, 
the quotient $G/Z$ is affine so the induction functor is exact (cf. \cite[Lemma 2.1]{SS22}). Thus (a) holds. 

For (b), since the quotient $G/Z$ is affine, if $M$ is projective/injective then $M$ upon restriction to $Z$ is projective/injective. Conversely, suppose that $M$ 
is a $G$-module that is injective upon restriction to $Z$. Then $\text{ind}_{Z}^{G}M$ is injective, and by the tensor identity $M\cong \text{ind}_{Z}^{G} {\mathbb C}\cong 
\text{ind}_{Z}^{G}M$. Now since $Z$ is a splitting subgroup of $G$, it follows that $M$ is a summand of $\text{ind}_{Z}^{G}M$, and is thus injective. 

For (c), there exists a spectral sequence, 
$$E_{2}^{i,j}=\opExt^{i}_{({\mathfrak g},{\mathfrak g}_{\0})}(M,N \otimes R^{j}\operatorname{ind}_{Z}^{G}{\mathbb C})\cong \opExt^{i+j}_{({\mathfrak z},{\mathfrak z}_{\0})}(M,N).$$
By using (a), the spectral sequence collapses and yields the result. For (d), one can use the isomorphism in (c), and note that the restriction map is realized by taking a summand of 
$\opExt^{n}_{({\mathfrak g},{\mathfrak g}_{\0})}(M,N \otimes \operatorname{ind}_{Z}^{G}{\mathbb C})$.  
\end{proof} 

Serganova and Sherman proved that the detecting subalgebra ${\mathfrak f}$ for classical Lie algebras of Type A are splitting subalgebras. They also present other examples of 
splitting subalgebras for Type Q classical ``simple'' Lie superalgebras that are not detecting subalgebras. See \cite[Theorem 1.1]{SS22}. An interesting question would be to 
determine for Type I classical Lie superalgebras with an ample detecting subalgebra whether the detecting subalgebra is a splitting subalgebra. 

\section{Tensor Triangular Geometry}\label{S:TTG}

\subsection{Triangulated Categories}

We will review the basic notions of triangulated categories, as these ideas lie at the foundation of the rest of the paper. Let $\mathbf{T}$ be a triangulated category. Recall that this means $\mathbf{T}$ is an additive category equipped with an auto-equivalence $\Sigma : \mathbf{T} \longrightarrow \mathbf{T}$ called the shift, and a set of distinguished triangles: 
$$
M \longrightarrow N \longrightarrow Q \longrightarrow \Sigma M
$$
all subject to a list of axioms the reader can find in \cite[Ch. 1]{Nee01}. 
\par A non-empty, full, additive subcategory $\bS$ of $\bT$ is called a \emph{triangulated subcategory} if (i) $M \in \bS$ implies that $\Sigma^n M \in \bS$ for all $n \in \mathbb{Z}$ and (ii) if $M \longrightarrow N \longrightarrow Q \longrightarrow \Sigma M$ is a distinguished triangle in $\bT,$ and two of $\{M, N, Q\}$ are objects in $\bS,$ then the third object is also in $\bS.$ A triangulated subcategory $\bS$ of $\bT$ is called \emph{thick} if $\bS$ is closed under taking direct summands. 
\par Assume that the triangulated category $\bT$ admits set-indexed coproducts. A triangulated subcategory $\bS$ of $\bT$ is called a \emph{localizing subcategory} if $\bS$ is closed under taking set-indexed coproducts. It follows from the Eilenberg swindle that localizing subcategories are necessarily thick. 
\par An object $C \in \bT$ is called \emph{compact} if $\Hom_{\bT}(C,-)$ commutes with set-indexed coproducts. The full subcategory of compact objects  in $\bT$ is denoted by $\bT^c,$ and the triangulated category $\bT$ is said to be \emph{compactly generated} if the isomorphism classes of compact objects form a set, and if for each non-zero object $M \in \bT$ there is an object $C \in \bT^c$ such that $\Hom_{\bT}(C,M) \neq 0.$

\subsection{Central Ring Actions}

Let $\mathbf{T}$ be a triangulated category with suspension $\Sigma,$ and let $R$ be a graded-commutative ring. Recall that $R$ being graded-commutative means that $R$ admits a $\mathbb{Z}$ grading and that for homogeneous elements $x, y \in R,$ $xy = (-1)^{|x||y|}yx.$ In this section we recall what it means for $R$ to act on $\mathbf{T}.$ 

Let $M$ and $N$ be objects in $\mathbf{T},$ and set  
$$
\Hom^{\bullet}_{\mathbf{T}}(M,N) := \bigoplus_{i \in \mathbb{Z}}\Hom_{\mathbf{T}}(M, \Sigma^i N). 
$$
Then $\Hom^{\bullet}_{\mathbf{T}}(M,N)$ is a graded abelian group, and $\End^{\bullet}_{\mathbf{T}}(M):= \Hom^{\bullet}_{\mathbf{T}}(M, M)$ is a graded ring where the multiplication is ``shift and compose". Notice that $\Hom^{\bullet}_{\mathbf{T}}(M,N)$ is a right $\End^{\bullet}_{\mathbf{T}}(M)$ and a left $\End^{\bullet}_{\mathbf{T}}(N)$-bimodule. 
\par The \emph{graded center} of $\mathbf{T}$ is the graded-commutative ring denoted by $Z^{\bullet}(\mathbf{T})$ whose degree $n$ component is given by 
$$
Z^n(\mathbf{T}) = \{ \eta : \text{Id}_{\mathbf{T}} \longrightarrow \Sigma^n \ | \ \eta\Sigma = (-1)^n \Sigma\eta \}.
$$
With this setup, an action of $R$ on $\mathbf{T}$ is a homomorphism $\phi: R \longrightarrow Z^{\bullet}(\mathbf{T})$ of graded rings, and if $\mathbf{T}$ admits such an action, $\mathbf{T}$ is called an $R$-linear triangulated category. It follows that if $\mathbf{T}$ is $R$-linear, then for each object $M$ in $\mathbf{T}$ there is an induced homomorphism of graded rings $\phi_M : R \longrightarrow \End^{\bullet}_{\mathbf{T}}(M)$ such that the induced $R$-module structures on $\Hom^{\bullet}_{\mathbf{T}}(M,N)$ by $\phi_M$ and $\phi_N$ agree up to sign.

\subsection{Tensor Triangulated Categories}
Our paper will concern triangulated categories that have an additional monoidal structure. Namely, we work with tensor triangulated categories as defined in \cite{Bal05}. A \emph{tensor triangulated category}, or TTC, is a triple $(\mathbf{K}, \otimes, \unit)$ where $\bK$ is a triangulated category, $\otimes: \bK \times \bK \longrightarrow \bK$ is a symmetric monoidal (tensor) product which is exact in each variable, and a monoidal unit $\unit.$
\par Let $\bK$ be a tensor triangulated category. With the additional structure of the tensor product on $\bK$ one can define analogues to the usual notions in commutative algebra like prime ideal and spectrum. A \emph{tensor ideal} in $\bK$ is a triangulated subcategory $\bI$ of $\bK$ such that $M \otimes N \in \bI$ for all $M \in \bI$ and $N \in \bK.$ A proper, thick tensor ideal $\bP$ of $\bK$ is said to be a \emph{prime ideal} if $M \otimes N \in \bP$ implies that either $M \in \bP$ or $N \in \bP.$ The \emph{Balmer spectrum} \cite[Definition 2.1]{Bal05} is defined as 
$$
\Spc(\bK) = \{\bP \subsetneq \bK \ | \ \bP \text{ is a prime ideal}\}.
$$
The Zariski topology on $\Spc(\bK)$ has as closed sets 
$$
Z(\mathcal{C}) = \{\bP \in \Spc(\bK) \ | \ \mathcal{C} \cap \bP \neq \emptyset\},
$$
where $\mathcal{C}$ is any family of objects in $\bK.$
\par The tensor triangulated category $\bK$ is said to be a \emph{compactly generated TTC} if $\bK$ is closed under set indexed coproducts, the tensor product preserves set indexed coproducts, $\bK$ is compactly generated as a triangulated category, the tensor product of compact objects is compact, $\unit$ is a compact object, and every compact object is rigid (i.e., strongly dualizable). 

\subsection{Stratifying Tensor Triangulated Categories} Given a tensor triangulated category $\bK,$ it is a difficult problem to classify its localizing subcategories and its thick tensor ideals. However, there are some circumstances in which such classifications are known, particularly in the case when $\bK$ is some category of representations. 


Stratification was introduced for triangulated categories in \cite{BIK}, and for tensor triangulated categories in \cite{BIK11}. The property of stratification allows for the classification of the localizing subcategories in terms of the $R$-support. Importantly, stratification is also easily tracked under change-of-categories, which plays a key role in these arguments. Since our work deals with tensor triangulated categories, we review that setting in this section. For a more detailed exposition the reader is referred to the original sources.

Let $\bK$ be a tensor triangulated category. Furthermore, assume that the unit object $\unit$ is a compact generator for $\bK$. Then $\End_{\bK}^{\bullet}(\unit)$ is a graded commutative ring which induces an action on $\bK$ by the homomorphisms 
$$
\End_{\bK}^{\bullet}(\unit) \xrightarrow{M \otimes -} \End_{\bK}^{\bullet}(M)
$$
for each object $M \in \bK.$ Let $R$ be a graded-commutative ring. A \emph{canonical} action of $R$ on $\bK$ is an action of $R$ on $\bK$ which is induced by a homomorphism of graded-commutative rings $R \longrightarrow \End_{\bK}^{\bullet}(\unit)$.  

Let $\Proj(R)$ denote the set of homogeneous prime ideals in $R.$ One can construct a notion of support for objects of $\bK$ by considering localization and colocalization functors. This support is described in detail in \cite[Section 3.1]{BIK}. Recall that the specialization closure of a subset $U \subseteq \Proj(R)$ is the set of all $\mathfrak{p} \in \Proj(R)$ for which there exists $\mathfrak{q} \in U$ with $\mathfrak{q} \subseteq \mathfrak{p}.$ A subset $V \subseteq \Proj(R)$ is \emph{specialization closed} if it equals its specialization closure. Given a specialization closed subset $V \subseteq \Proj(R)$, the full subcategory of $V$-torsion objects $\mathbf{T}_V $ is defined as 
$$
\mathbf{T}_V = \{M \in \mathbf{T} \ | \ \Hom_{\mathbf{T}}^{\bullet}(C,M)_{\mathfrak{p}} = 0 \ \text{for all} \ C \in \mathbf{T}^{c}, \ \mathfrak{p} \in \Proj(R) \setminus V\}.
$$
It can be checked that $\mathbf{T}_V$ is a localizing subcategory of $\mathbf{T},$ so there exists a localization functor $L_V: \mathbf{T} \longrightarrow \mathbf{T},$ and an induced colocalization functor $\mathit{\Gamma_V},$ called the \emph{local cohomology functors} of $V,$ which for each object $M \in \mathbf{T}$ give a functorial triangle
$$
\mathit{\Gamma}_VM \longrightarrow M \longrightarrow L_VM \longrightarrow .
$$

For each $\mathfrak{p} \in \Proj(R)$, let $V$ and $W$ be specialization closed subsets of $\Proj(R)$ such that $\{\mathfrak{p}\} = V \setminus W$. There is a local-cohomology functor $\mathit{\Gamma_{\mathfrak{p}}} := \mathit{\Gamma_V} L_{W}$ independent of the choice of $V$ and $W$. These functors give rise to a notion of support for objects $x \in \mathbf{T},$ and a notion of support for $\mathbf{T}$ itself. In particular, set 
$$
\supp_R(x) := \{ \mathfrak{p} \in \Proj(R) \ | \ \mathit{\Gamma_{\mathfrak{p}}}(x) \neq 0 \}, 
$$
and 
$$
\supp_R(\mathbf{T}) := \{ \mathfrak{p} \in \Proj(R) \ | \ \mathit{\Gamma_{\mathfrak{p}}}(\mathbf{T}) \neq 0 \}. 
$$
For each $\mathfrak{p} \in \Proj(R)$, the full subcategory $\mathit{\Gamma_{\mathfrak{p}}}(\bT)$ is a tensor ideal and localizing. The main definition of this section is the concept of stratification. 
\begin{defn} 
The $R$-linear tensor triangulated category $\bT$ is said to be \emph{stratified} by $R$ if $\mathit{\Gamma_{\mathfrak{p}}}(\bf T)$ is either zero or a minimal tensor ideal localizing subcategory of $\bT$ for all $\mathfrak{p} \in 
\Proj(R)$.
\end{defn}

\subsection{} For each localizing subcategory $\mathbf{C}$ of $\bT,$ set 
$$
\sigma(\mathbf{C}) = \supp_{R}(\mathbf{C}) = \{\mathfrak{p} \in \Proj(R) \ | \ \mathit{\Gamma_{\mathfrak{p}}}(\mathbf{C}) \neq 0\},
$$
and for each subset $V$ of $\Proj(R)$, set 
$$
\tau(V) = \{X \in \bT \ | \ \supp_R X \subseteq V\}.
$$ 
The following theorems \cite[Theorem 3.8]{BIK} classify localizing subcategories and thick tensor ideals via the aforementioned maps. 
\begin{theorem}
Let $\bK$ be a tensor triangulated category stratified by the action of $R.$ Then the maps $\sigma$ and $\tau$ are mutually inverse bijections between the tensor ideal localizing subcategories of $\bK$ and the subsets of $\supp_R \bK:$
$$
\{\text{tensor ideal localizing subcategories of } \bK\} \xleftrightarrow[\tau]{\sigma} \{\text{subsets of } \supp_R \bK\}.
$$
\end{theorem}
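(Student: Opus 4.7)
The plan is to verify that the two maps are mutually inverse by exhibiting, for every tensor ideal localizing subcategory $\mathbf{C} \subseteq \bK$ and every subset $V \subseteq \supp_{R}\bK$, the identities $\tau(\sigma(\mathbf{C})) = \mathbf{C}$ and $\sigma(\tau(V)) = V$. First I would collect the structural properties of the local cohomology functors $\mathit{\Gamma}_{\mathfrak{p}}$ that one gets from the machinery of \cite{BIK}: namely, each $\mathit{\Gamma}_{\mathfrak{p}}$ is an exact functor preserving coproducts, $\mathit{\Gamma}_{\mathfrak{p}}(\bK)$ is a tensor ideal localizing subcategory, $\supp_{R}(\mathit{\Gamma}_{\mathfrak{p}} X) \subseteq \{\mathfrak{p}\}$, and for any $X \in \bK$ one has $\supp_{R}(X) = \{\mathfrak{p} \in \Proj(R) \mid \mathit{\Gamma}_{\mathfrak{p}} X \neq 0\}$. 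From these it is immediate that $\tau(V)$ is a tensor ideal localizing subcategory and that $\sigma(\mathbf{C}) \subseteq \supp_{R}\bK$.

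For the inclusion $\mathbf{C} \subseteq \tau(\sigma(\mathbf{C}))$, take $X \in \mathbf{C}$; since $\mathbf{C}$ is a tensor ideal, $\mathit{\Gamma}_{\mathfrak{p}} X \in \mathbf{C}$ for every $\mathfrak{p}$, so $\supp_{R}(X) \subseteq \sigma(\mathbf{C})$. The reverse inclusion $\tau(\sigma(\mathbf{C})) \subseteq \mathbf{C}$ is the heart of the argument: given $X$ with $\supp_{R}(X) \subseteq \sigma(\mathbf{C})$, I want to show $X \in \mathbf{C}$. The plan is to reconstruct $X$ from its local pieces $\mathit{\Gamma}_{\mathfrak{p}} X$, using that one can assemble $X$ from $\{\mathit{\Gamma}_{\mathfrak{p}} X\}_{\mathfrak{p} \in \supp_{R}(X)}$ through a localizing-subcategory closure (this is the local-to-global principle from \cite{BIK}, which holds once the canonical action of $R$ satisfies the usual finiteness assumptions). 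For each such $\mathfrak{p} \in \sigma(\mathbf{C})$, by the stratification hypothesis $\mathit{\Gamma}_{\mathfrak{p}}(\bK)$ is a minimal tensor ideal localizing subcategory, and since $\mathit{\Gamma}_{\mathfrak{p}}(\mathbf{C}) \neq 0$, we get $\mathit{\Gamma}_{\mathfrak{p}}(\bK) \subseteq \mathbf{C}$ by minimality. In particular $\mathit{\Gamma}_{\mathfrak{p}} X \in \mathbf{C}$, and closure of $\mathbf{C}$ under the relevant reconstruction (triangles and coproducts) yields $X \in \mathbf{C}$.

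For the identity $\sigma(\tau(V)) = V$, the inclusion $\sigma(\tau(V)) \subseteq V$ is straightforward: any $X \in \tau(V)$ satisfies $\supp_{R}(X) \subseteq V$, and $\mathfrak{p} \in \sigma(\tau(V))$ means $\mathit{\Gamma}_{\mathfrak{p}} X \neq 0$ for some such $X$, forcing $\mathfrak{p} \in V$. For the reverse inclusion, fix $\mathfrak{p} \in V \cap \supp_{R}\bK$; then $\mathit{\Gamma}_{\mathfrak{p}}\bK$ contains a nonzero object $Y$ with $\supp_{R}(Y) \subseteq \{\mathfrak{p}\} \subseteq V$, so $Y \in \tau(V)$, and $\mathit{\Gamma}_{\mathfrak{p}} Y \neq 0$ gives $\mathfrak{p} \in \sigma(\tau(V))$.

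The main obstacle is unquestionably the reconstruction step in the second paragraph: going from knowing $\mathit{\Gamma}_{\mathfrak{p}} X \in \mathbf{C}$ for every $\mathfrak{p}$ back to $X \in \mathbf{C}$. This is the place where one must invoke the local-to-global principle and verify that the canonical $R$-action satisfies the hypotheses (most importantly, that $R$ is Noetherian on each homogeneous component, or more generally that the support functions behave well under the tensor product). Once that principle is in hand, the remainder of the argument reduces to organizing the local data using minimality, and is essentially formal.
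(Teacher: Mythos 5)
Your proof is correct and reproduces the standard argument for this classification result. A point worth flagging: the paper does not prove this theorem itself — it is cited directly from \cite[Theorem 3.8]{BIK} — so there is no ``paper's proof'' to compare against, but your reconstruction follows the BIK argument faithfully.

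Two small observations on the details. First, you are right that the local-to-global principle is the load-bearing ingredient in the inclusion $\tau(\sigma(\mathbf{C})) \subseteq \mathbf{C}$. Note, however, that the definition of ``stratified'' given in this paper records only the minimality condition on the $\mathit{\Gamma}_{\mathfrak{p}}(\bK)$; in \cite{BIK11} stratification is defined as the conjunction of the local-to-global principle together with that minimality condition, so LGP is not an extra verification one must make but is built into the hypothesis. (The paper itself remarks separately, for the concrete categories it cares about, that the local-to-global principle holds.) Second, in your argument for $\mathit{\Gamma}_{\mathfrak{p}}(\bK) \subseteq \mathbf{C}$ you invoke minimality via $\mathit{\Gamma}_{\mathfrak{p}}(\mathbf{C}) \neq 0$; to make this airtight one should note that $\mathbf{C} \cap \mathit{\Gamma}_{\mathfrak{p}}(\bK)$ is itself a tensor ideal localizing subcategory of $\bK$ (intersection of two such) which is nonzero, and minimality then forces it to equal $\mathit{\Gamma}_{\mathfrak{p}}(\bK)$. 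You have the right idea; this just closes a potential gap about what ``minimal'' is being applied to. The remaining steps — the easy inclusions and the verification that $\mathit{\Gamma}_{\mathfrak{p}}$, being a tensor idempotent, carries $\mathbf{C}$ into itself — are fine.
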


In the case when $\bK$ is stratified by a noetherian ring $R$, the maps $\sigma$ and $\tau$ also give a classification of thick tensor ideals in $\bK^c:$

\begin{theorem}
Let $\bK$ be a compactly generated tensor triangulated category which is stratified by the noetherian ring $R$.  Then the maps $\sigma$ and $\tau$ are mutually inverse bijections between the thick tensor ideal subcategories of $\bK^c$ and the specialization closed subsets of $\supp_R \bK$:
$$
\{\text{thick tensor ideal subcategories of } \bK^c\} \xleftrightarrow[\tau]{\sigma} \{\text{specialization closed subsets of } \supp_R \bK\}.
$$
\end{theorem}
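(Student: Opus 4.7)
The plan is to bootstrap the preceding classification theorem, which already gives a bijection between all tensor ideal localizing subcategories of $\bK$ and all subsets of $\supp_{R}\bK$, to a statement about compact objects. This requires two identifications: thick tensor ideals of $\bK^{c}$ should correspond to a distinguished class of localizing subcategories of $\bK$, and specialization closed subsets of $\supp_{R}\bK$ should correspond, under $\sigma$, to exactly that class.

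First I would invoke the Neeman--Thomason-style correspondence between thick tensor ideals $\bJ \subseteq \bK^{c}$ and \emph{compactly generated} tensor ideal localizing subcategories of $\bK$: the assignments $\bJ \mapsto \Loc^{\otimes}(\bJ)$ and $\bL \mapsto \bL \cap \bK^{c}$ are mutually inverse once one knows $\bK$ is compactly generated and the tensor product preserves compactness (both are part of the hypotheses). Combined with the first theorem, the proof then reduces to showing that, under $\sigma$, the compactly generated tensor ideal localizing subcategories of $\bK$ are precisely those $\bL$ with $\sigma(\bL)$ specialization closed.

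The forward direction is straightforward. For any $C \in \bK^{c}$, the support $\supp_{R}(C)$ is the closed subset of $\Proj(R)$ cut out by the annihilator of $\End^{\bullet}_{\bK}(C)$, and in particular is specialization closed. Since $\sigma$ commutes with coproducts of localizing subcategories, if $\bL = \Loc^{\otimes}(\bL \cap \bK^{c})$ then $\sigma(\bL) = \bigcup_{C \in \bL \cap \bK^{c}} \supp_{R}(C)$ is a union of closed subsets, hence specialization closed.

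The reverse direction is the main obstacle and is where the noetherian hypothesis on $R$ enters essentially. Given a specialization closed $V \subseteq \Proj(R)$, one must exhibit $\tau(V)$ as $\Loc^{\otimes}$ of a set of compact objects. For each $\mathfrak{p} \in V$, choose a finite generating set of $\mathfrak{p}$ (possible by noetherianness) and form the associated Koszul object $K_{\mathfrak{p}} \in \bK^{c}$ as a finite iterated cofiber of multiplication maps on $\unit$; this is compact with $\supp_{R}(K_{\mathfrak{p}}) = V(\mathfrak{p}) := \{\mathfrak{q} \in \Proj(R) \mid \mathfrak{p} \subseteq \mathfrak{q}\}$. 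Using specialization closure of $V$, we get $V = \bigcup_{\mathfrak{p} \in V} V(\mathfrak{p})$, and one then verifies via the local cohomology description of $\Gamma_{\mathfrak{p}}$ (which is controlled by these Koszul objects when $R$ is noetherian) that $\tau(V) = \Loc^{\otimes}\{K_{\mathfrak{p}} \mid \mathfrak{p} \in V\}$. The delicate point is that noetherianness is needed both to produce the Koszul compact generators and to ensure that the torsion subcategory $\bT_{V}$ is generated by them; once this is in hand, applying $\sigma$ and $\tau$ and invoking the first theorem completes the argument.
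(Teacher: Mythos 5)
The paper does not give its own proof of this statement; it is cited, together with the preceding theorem on tensor-ideal localizing subcategories, directly from Benson--Iyengar--Krause \cite[Theorem 3.8]{BIK}. Your outline reproduces the standard argument from that source: reduce via the Neeman--Thomason correspondence to compactly generated tensor-ideal localizing subcategories of $\bK$, then match these with specialization closed subsets under $\sigma$ and $\tau$ by using closedness of supports of compact objects in one direction and Koszul objects over the noetherian ring $R$ to generate $\tau(V)$ from compacts in the other.
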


\subsection{} The following result, whose proof of which can be found in \cite{Nee01}, demonstrates a close relationship between stratification of the derived category of a commutative ring with its 
canonical action, and is needed for our classifications. 

\begin{theorem}
Let $A$ be a commutative Noetherian ring and let $D(A)$ be the derived category of differential graded $A$-modules. Then $D(A)$ is stratified by the canonical action of $A.$
\end{theorem}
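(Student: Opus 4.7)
The plan is to reduce to Neeman's classical theorem classifying localizing subcategories of $D(A)$ by subsets of $\Spec(A)$, and then to identify the BIK local cohomology fibers $\Gamma_{\mathfrak{p}}(D(A))$ with Neeman's minimal localizing subcategories.

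Since $A$ is regarded as a DG ring concentrated in degree $0$, the grading is trivial and $\Proj(A) = \Spec(A)$, while the canonical action is the tautological map $A \to \End_{D(A)}^{\bullet}(A)$ landing in degree $0$. First I would match the BIK constructions to classical commutative algebra: for $\mathfrak{p} \in \Spec(A)$, taking $V = V(\mathfrak{p})$ and $W = V(\mathfrak{p}) \setminus \{\mathfrak{p}\}$, verify that $L_W$ agrees with derived localization $(-)_{\mathfrak{p}}$ and $\Gamma_V$ with derived local cohomology $R\Gamma_{V(\mathfrak{p})}(-)$. This identifies $\Gamma_{\mathfrak{p}}(M)$ with $R\Gamma_{\mathfrak{p} A_{\mathfrak{p}}}(M_{\mathfrak{p}})$ in $D(A_{\mathfrak{p}})$; by noetherianity, this is computable via the stable Koszul complex on a finite generating set of $\mathfrak{p} A_{\mathfrak{p}}$.

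Next I would show that the BIK support $\supp_A(M) = \{ \mathfrak{p} : \Gamma_{\mathfrak{p}}(M) \neq 0 \}$ coincides with Neeman's residue-field support $\{ \mathfrak{p} : M \otimes_A^L \kappa(\mathfrak{p}) \neq 0 \}$; both detect whether $M_{\mathfrak{p}}$ is nontrivial modulo $\mathfrak{p} A_{\mathfrak{p}}$, with the Koszul-complex description of $\Gamma_{\mathfrak{p}}(A)$ providing the bridge. Neeman's theorem (from ``The chromatic tower for $D(R)$'') then asserts that $M \mapsto \supp(M)$ is an inclusion-preserving bijection between localizing subcategories of $D(A)$ and subsets of $\Spec(A)$, with singletons corresponding to minimal nonzero localizing subcategories. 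Hence each fiber $\Gamma_{\mathfrak{p}}(D(A)) = \{ M : \supp_A(M) \subseteq \{\mathfrak{p}\} \}$ is either zero or a minimal localizing subcategory. That these are automatically tensor ideals follows from the compact generation of $D(A)$ by $A$: for any localizing $\mathbf{L}$ and $M \in \mathbf{L}$, the subcategory $\{ N : M \otimes_A^L N \in \mathbf{L} \}$ is localizing and contains $A$, hence all of $D(A)$.

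The main obstacle is the identification of the abstract BIK local cohomology $\Gamma_{\mathfrak{p}}$ with the classical derived local cohomology at $\mathfrak{p}$ after localization, which requires either a careful comparison of universal properties or an explicit Koszul-complex calculation to show the two assignments of supports agree. Once that bridge is in place, Neeman's deep classification theorem delivers the stratification essentially for free.
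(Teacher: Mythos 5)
The paper does not give its own proof of this statement; it simply cites Neeman (the theorem statement points to \cite{Nee01}, though the substance is really Neeman's 1992 chromatic-tower paper \cite{Nee92} together with its reinterpretation in BIK's framework). Your sketch is correct and is precisely the standard argument behind the citation: identify the BIK local cohomology functors $\Gamma_{\mathfrak{p}}$ with derived local cohomology at $\mathfrak{p}$ after localization (via the stable Koszul complex, using noetherianity for finite generation), conclude that BIK support agrees with Neeman's residue-field support, invoke Neeman's bijection between localizing subcategories of $D(A)$ and subsets of $\Spec(A)$ to get minimality of the fibers, and observe that every localizing subcategory is automatically a tensor ideal because $A$ generates $D(A)$ as a localizing subcategory. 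Two small remarks: (i) you should note that the local-global principle (the other half of BIK stratification, in the broader sense) holds automatically for a noetherian canonical action, so your verification of minimality is all that is needed under the definition used in this paper; (ii) the bridge between $\Gamma_{\mathfrak{p}}$ and classical local cohomology, which you flag as the main obstacle, is exactly the comparison carried out by BIK, so the step is routine once one cites their results directly rather than re-deriving the universal properties.
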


 \subsection{Lie Superalgebras of the Form $\mathfrak{z} = \mathfrak{z}_{\overline{0}} \oplus \mathfrak{z}_{\overline{1}}$} 

Throughout this section $\mathfrak{z} = \mathfrak{z}_{\overline{0}} \oplus \mathfrak{z}_{\overline{1}}$ denotes a Lie superalgebra where $ \mathfrak{z}_{\overline{0}} = \mathfrak{t}$ is a torus, and $[ \mathfrak{z}_{\overline{0}},  \mathfrak{z}_{\overline{1}}]=0$. This is a classical Lie superalgebra. As an example, $\mathfrak{z}$ could be any of the detecting subalgebras introduced in \cite[Section 4.4]{BKN1}. Let $\mathbf{K} = \Stab(\mathcal{C}_{ (\mathfrak{z},\mathfrak{z}_{\overline{0}})})$ be the stable module category whose objects are all $\mathfrak{z}$-modules which are finitely semi-simple as $ \mathfrak{z}_{\overline{0}}$-modules. Then $\mathbf{K}$ is a compactly generated tensor-triangulated category with full subcategory of compact objects $\mathbf{K}^{c}= \stab(\mathcal{F}_{ (\mathfrak{z},\mathfrak{z}_{\overline{0}})})$. Note that the objects of $\mathbf{K}^c$ consist precisely of the finite-dimensional objects in $\mathbf{K}$. Let $R$ denote the cohomology ring $\HH^*(\mathfrak{z},\mathfrak{z}_{\overline{0}}; \mathbb{C})$. According to \cite[Section 4.3]{BKN4}, 
$$
\HH^{\bullet}(\mathfrak{z},\mathfrak{z}_{\overline{0}}; \mathbb{C}) \cong S^{\bullet}({\mathfrak{z}_{\overline{1}}}^*)
$$
and thus $R$ is a polynomial algebra. The following theorem was established in \cite[Theorem 4.5.4]{BKN4}.  
\begin{theorem}
The thick tensor ideals in $\stab({\mathcal F}_{(\g,\g_{\0})})$ are in bijective correspondence with specialization closed subsets of $\Proj(S^{\bullet}({\mathfrak{z}_{\overline{1}}}^*)).$ Furthermore, $\Spc(\stab({\mathcal F}_{(\g,\g_{\0})}))$ is homeomorphic to $\Proj(S^{\bullet}({\mathfrak{z}_{\overline{1}}}^*))$. 
\end{theorem}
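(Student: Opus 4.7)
The plan is to establish that $\mathbf{K} = \Stab(\mathcal{C}_{(\mathfrak{z},\mathfrak{z}_{\bar 0})})$ is stratified (in the sense of Benson--Iyengar--Krause) by the canonical action of $R = H^\bullet(\mathfrak{z},\mathfrak{z}_{\bar 0};\mathbb{C}) \cong S^\bullet(\mathfrak{z}_{\bar 1}^*)$, and then to deduce both statements from the BIK classification theorems and Balmer's comparison map, all of which are quoted earlier in the excerpt.

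The first step is to exploit the hypotheses on $\mathfrak{z}$. Because $\mathfrak{z}_{\bar 0} = \mathfrak{t}$ is a torus $T$ and $[\mathfrak{z}_{\bar 0},\mathfrak{z}_{\bar 1}]=0$, every object of $\mathcal{C}_{(\mathfrak{z},\mathfrak{z}_{\bar 0})}$ decomposes as a direct sum of $T$-weight spaces, and the $\mathfrak{z}_{\bar 1}$-action preserves this decomposition. Combined with the fact that $\mathfrak{z}_{\bar 1}$ is a purely odd abelian Lie superalgebra, so that $U(\mathfrak{z}_{\bar 1}) \cong \Lambda(\mathfrak{z}_{\bar 1})$, this identifies $\mathcal{C}_{(\mathfrak{z},\mathfrak{z}_{\bar 0})}$ with the category of $X^*(T)$-graded $\Lambda(\mathfrak{z}_{\bar 1})$-modules. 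In particular $\mathbf{K}$ is a compactly generated tensor triangulated category whose underlying tensor structure is the graded enhancement of $\Stab(\Lambda(\mathfrak{z}_{\bar 1}))$. Because $T$ acts trivially on $\mathfrak{z}_{\bar 1}$, the $X^*(T)$-grading is completely inert on cohomology, and the identification $R \cong S^\bullet(\mathfrak{z}_{\bar 1}^*)$ already noted in the paper is compatible with it.

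Next I would establish the stratification of $\mathbf{K}$ by $R$. The cleanest route is through Koszul duality: the exterior algebra $\Lambda(\mathfrak{z}_{\bar 1})$ is a cocommutative Hopf superalgebra whose ext-algebra is the polynomial ring $S^\bullet(\mathfrak{z}_{\bar 1}^*)$, and Koszul duality promotes this to an equivalence between $\mathbf{K}$ and an appropriate localization of the derived category $D(S^\bullet(\mathfrak{z}_{\bar 1}^*))$, compatible with the $X^*(T)$-grading. By the theorem quoted from \cite{Nee01} just above, $D(S^\bullet(\mathfrak{z}_{\bar 1}^*))$ is stratified by the canonical action of $S^\bullet(\mathfrak{z}_{\bar 1}^*)$; transporting this along the Koszul equivalence endows $\mathbf{K}$ with a stratification by $R$. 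One must verify that the equivalence intertwines the canonical action on either side (follow this from functoriality of $\End^{\bullet}_{\mathbf{K}}(\unit)$) and that the $X^*(T)$-grading, being free and trivial on $R$, does not create new minimal localizing tensor ideals.

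With stratification in place, the first assertion is immediate from the BIK classification theorem cited above: since $R \cong S^\bullet(\mathfrak{z}_{\bar 1}^*)$ is noetherian, the maps $\sigma$ and $\tau$ restrict to mutually inverse bijections between thick tensor ideals of $\mathbf{K}^c = \stab(\mathcal{F}_{(\mathfrak{z},\mathfrak{z}_{\bar 0})})$ and specialization closed subsets of $\supp_R \mathbf{K}$. A short argument shows $\supp_R \mathbf{K} = \Proj(R)$: the unit $\unit$ is a compact generator and its support is the image of the canonical map $R \to \End^{\bullet}_{\mathbf{K}}(\unit)$, which is the identity, so every homogeneous prime is realized. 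For the topological statement, Balmer's comparison map $\rho\colon \Spc(\mathbf{K}^c) \to \Proj(\End^{\bullet}_{\mathbf{K}^c}(\unit)) = \Proj(R)$ is continuous, its underlying map of sets is a bijection by the classification just obtained (both sides parametrize the same thick tensor ideals through their complements of closed points), and because the Zariski topology on $\Spc(\mathbf{K}^c)$ is generated by support-closed sets determined by the same data, $\rho$ is a homeomorphism.

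The main obstacle I expect is the rigorous verification of Koszul-duality stratification in this $\mathbb{Z}_{2}$-graded, $X^*(T)$-equivariant setting: one must track sign conventions for the odd generators of $\mathfrak{z}_{\bar 1}$, ensure that compact generation and rigidity transport correctly, and confirm that the local cohomology functors $\Gamma_{\mathfrak{p}}$ on $\mathbf{K}$ agree with those pulled back from $D(S^\bullet(\mathfrak{z}_{\bar 1}^*))$. An alternative to circumvent part of this difficulty is to prove stratification directly by a Carlson-module / $\pi$-point argument: construct, for each homogeneous prime $\mathfrak{p} \in \Proj(R)$, a "minimal" Koszul-style object whose only non-zero local cohomology sits at $\mathfrak{p}$, and use these to witness minimality of each $\Gamma_{\mathfrak{p}}(\mathbf{K})$.
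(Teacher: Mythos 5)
Your proposal is correct and takes essentially the same route as the paper: the paper recovers this theorem (quoted from \cite[Theorem 4.5.4]{BKN4}) by proving that $\Stab(\mathcal{C}_{(\mathfrak{z},\mathfrak{z}_{\bar 0})})$ is stratified by $S^{\bullet}(\mathfrak{z}_{\bar 1}^{*})$, implementing your Koszul-duality step concretely via the BIK-style differential graded module $J$ over $U(\mathfrak{z}/\mathfrak{z}_{\bar 0})\otimes_{\mathbb{C}} R$ and the equivalence $\Hom_{U(\mathfrak{z}/\mathfrak{z}_{\bar 0})}(J,-)\colon K(\operatorname{Inj}\mathcal{C}_{(\mathfrak{z},\mathfrak{z}_{\bar 0})})\to D(R)$, and then deducing the classification of thick tensor ideals and the identification of the spectrum from the Benson--Iyengar--Krause theorems exactly as you do. The compatibility issues you flag (matching the canonical $R$-actions and passing from $K(\operatorname{Inj})$ to the stable category) are precisely the points the paper disposes of by citing \cite[Theorems 5.4 and 3.19]{BIK} and the recollement.
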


In \cite{BKN4} the authors point out that $\mathbf{K}$ is an $R$-linear triangulated category, and that the local-global principle holds. It was conjectured that $R$ stratifies $\mathbf{K},$ a result which would recover the theorem. The goal of this section is to pursue the stratification avenue, and to prove the following theorem.

\begin{theorem}
The tensor-triangulated category $\Stab(\mathcal{C}_{ (\mathfrak{z},\mathfrak{z}_{\overline{0}})})$ is stratified by the action of the cohomology ring $\HH^{\bullet}(\mathfrak{z},\mathfrak{z}_{\overline{0}}; \mathbb{C}) \cong S^{\bullet}({\mathfrak{z}_{\overline{1}}}^*)$. 
\end{theorem}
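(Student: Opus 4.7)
The plan is to verify stratification via the Benson--Iyengar--Krause criterion. Since the excerpt records that $\mathbf{K} := \Stab(\mathcal{C}_{(\mathfrak{z},\mathfrak{z}_{\overline{0}})})$ is $R$-linear (with $R = S^{\bullet}(\mathfrak{z}_{\overline{1}}^{*})$) and that the local--global principle for the cohomological support holds by \cite{BKN4}, the work reduces to showing that for every homogeneous prime $\mathfrak{p}\in \Proj(R)$ the local stratum $\Gamma_{\mathfrak{p}}\mathbf{K}$ is either zero or a minimal non-zero tensor-ideal localizing subcategory of $\mathbf{K}$.

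First I would record the structural simplifications supplied by the hypotheses. Because $\mathfrak{z}_{\overline{0}} = \mathfrak{t}$ is a torus acting semisimply and $[\mathfrak{z}_{\overline{0}},\mathfrak{z}_{\overline{1}}] = 0$, every object of $\mathcal{C}_{(\mathfrak{z},\mathfrak{z}_{\overline{0}})}$ decomposes into $\mathfrak{t}$-weight spaces preserved by $\mathfrak{z}_{\overline{1}}$. The simple objects of $\mathbf{K}$ are the tensor-invertible one-dimensional weight modules $\mathbb{C}_{\lambda}$ for $\lambda \in X(\mathfrak{t})$, each with $\End^{\bullet}_{\mathbf{K}}(\mathbb{C}_{\lambda})\cong R$; the family $\{\mathbb{C}_{\lambda}\}_{\lambda}$ is a set of compact generators. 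Since $\mathfrak{t}$ acts trivially on $\mathfrak{z}_{\overline{1}}$, the ring $R$ lies in trivial weight, so tensoring with $\mathbb{C}_{\lambda}$ is an $R$-linear auto-equivalence of $\mathbf{K}$ that preserves cohomological supports.

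To establish minimality of $\Gamma_{\mathfrak{p}}\mathbf{K}$, I would pursue the Koszul-object strategy. Using the canonical homomorphism $R\to \End^{\bullet}_{\mathbf{K}}(\unit)$ together with the theorem (recalled in the excerpt) that $D(R)$ is stratified by its canonical action, I would construct, for each $\mathfrak{p} \in \Proj(R)$, a residue-field-type Koszul object $K_{\mathfrak{p}}\in\Gamma_{\mathfrak{p}}\mathbf{K}$ by killing in $\unit$ a regular sequence of homogeneous generators of $\mathfrak{p}$. The first substep is to show that $K_{\mathfrak{p}}$ generates $\Gamma_{\mathfrak{p}}\mathbf{K}$ as a tensor-ideal localizing subcategory, which I would verify by transferring the known generation of $\Gamma_{\mathfrak{p}}D(R)$ by the Koszul object $\kappa(\mathfrak{p})$ along the $R$-linear functor $\unit\otimes_{R}^{L}(-)$ and using that the family $\{\mathbb{C}_{\lambda}\}$ exhausts $\mathbf{K}$ up to tensoring with invertibles. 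The second and more delicate substep is to show that for any nonzero $M\in \Gamma_{\mathfrak{p}}\mathbf{K}$ the tensor-ideal localizing subcategory generated by $M$ contains $K_{\mathfrak{p}}$.

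The main obstacle will be the second substep: passing from $M\otimes K_{\mathfrak{p}} \neq 0$ (which will follow from the tensor-product property of the $R$-support together with $\mathfrak{p}\in \supp_{R}M$) to the conclusion that $K_{\mathfrak{p}}$ itself lies in the tensor-ideal localizing closure of $M$. This requires a residue-field-style decomposition of $M\otimes K_{\mathfrak{p}}$ as a (possibly infinite) coproduct of shifts of $K_{\mathfrak{p}}$, analogous to the standard argument for elementary abelian $p$-groups in the BIKP framework, and specializing the general arguments of \cite{BIK11} to our setting. The $X(\mathfrak{t})$-grading adds only bookkeeping---tracking weights of summands---but no essentially new obstruction, because $R$ is concentrated in trivial weight and the simples $\mathbb{C}_{\lambda}$ are tensor-invertible. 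Combined with the local--global principle, the verification of minimality at each $\mathfrak{p}$ then yields the stratification of $\mathbf{K}$ by $R$.
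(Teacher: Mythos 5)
Your proposal takes a genuinely different route from the paper. The paper does not verify minimality of the local strata $\Gamma_{\mathfrak{p}}\mathbf{K}$ pointwise. Instead, following the elementary abelian $2$-group argument of \cite[Section 5.2]{BIK}, it builds an explicit BGG-style Koszul duality equivalence: taking $U(\mathfrak{z}/\mathfrak{z}_{\overline{0}})\cong \Lambda^{\bullet}(\mathfrak{z}_{\overline{1}}^{*})$, it forms the differential graded $U(\mathfrak{z}/\mathfrak{z}_{\overline{0}})\otimes_{\mathbb{C}}R$-module $J$ (with differential $\delta=\sum z_i\otimes y_i$) and shows that $\Hom_{U(\mathfrak{z}/\mathfrak{z}_{\overline{0}})}(J,-)$ gives an equivalence $K(\Inj\,\mathcal{C}_{(\mathfrak{z},\mathfrak{z}_{\overline{0}})})\simeq D(R)$ compatible with the canonical $R$-actions. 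Stratification then transports directly from $D(R)$ (already known to be stratified by its canonical action), using the equivalence $\Stab(\mathcal{C}_{(\mathfrak{z},\mathfrak{z}_{\overline{0}})})\simeq K_{\mathrm{ac}}(\Inj\,\mathcal{C}_{(\mathfrak{z},\mathfrak{z}_{\overline{0}})})$ and the recollement of \cite[Theorem 3.19]{BIK}. In other words, the paper outsources the entire minimality problem to $D(R)$, which is precisely what the Koszul duality equivalence buys.

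This matters because the step you flag as the main obstacle in your plan --- decomposing $M\otimes K_{\mathfrak{p}}$ as a coproduct of shifts of $K_{\mathfrak{p}}$ for an arbitrary nonzero $M\in\Gamma_{\mathfrak{p}}\mathbf{K}$ --- is not a citable black box from \cite{BIK11}; it is exactly what the Koszul-duality equivalence is used to establish in the exterior-algebra/elementary-abelian setting, and it is where all of the difficulty in the theorem lives. The general BIK machinery reduces stratification to this minimality statement but does not prove it; for elementary abelian groups Benson--Iyengar--Krause also prove it via the BGG correspondence rather than by a direct residue-field decomposition of big modules. So your plan is morally consistent with the literature, and your structural observations about weights, tensor-invertible simples $\mathbb{C}_{\lambda}$, and $R$ being concentrated in trivial weight are correct and do reduce bookkeeping, but as written the argument has a genuine gap at the decisive substep. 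To close it you would either need an independent proof of the decomposition of $M\otimes K_{\mathfrak{p}}$ over the exterior algebra (infinite-dimensional $M$ included), or you would end up constructing the equivalence with $D(R)$ anyway, at which point you might as well transport stratification globally as the paper does.
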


\begin{proof} 

The following argument follows closely that of \cite[Section 5.2]{BIK} where the category $\text{Stab}(kE)$ is considered, where $k$ is an algebraically closed field of characteristic two, and $E$ is an elementary abelian $2$-group. The key observation that makes these situations similar is that the cohomology rings in both cases are polynomial rings, and both the group algebra and the universal enveloping superalgebra 
of ${\mathfrak z}/{\mathfrak z}_{\0}$ are exterior algebras. 

Consider the universal enveloping superalgebra of the quotient $U(\mathfrak{z}/\mathfrak{z}_{\overline{0}})$. Then there is an isomorphism of $\mathbb{C}$-algebras $U(\mathfrak{z}/\mathfrak{z}_{\overline{0}}) \cong \Lambda^{\bullet}(\mathfrak{z}^*_{\overline{1}}).$ Therefore, there is an isomorphism of rings $\Lambda^{\bullet}(\mathfrak{z}^*_{\overline{1}}) \cong \mathbb{C}[z_1, \dots, z_r]/(z_i^2)$. Set $R=\operatorname{H}^{\bullet}({\mathfrak z},{\mathfrak z}_{\0},\mathbb C)$. Choose a basis $\{y_1, \dots, y_r\}$ of $\mathfrak{z}^*_{\overline{1}}$ so that $R \cong \mathbb{C}[y_1, \dots, y_r]$ is an isomorphism of rings, and view $R$ as a differential graded algebra with zero differential and $|y_i| = 1$ for each $i$.  

The $\mathbb{C}$-algebra $U(\mathfrak{z}/\mathfrak{z}_{\overline{0}}) \otimes_{\mathbb{C}} R$ is graded with degree $i$ component $U(\mathfrak{z}/\mathfrak{z}_{\overline{0}}) \otimes_{\mathbb{C}} R^i$ and with multiplication defined by $(a \otimes s)(b \otimes t)= ab \otimes st.$ Consider $U(\mathfrak{z}/\mathfrak{z}_{\overline{0}}) \otimes_{\mathbb{C}} R$ as a differential graded algebra with zero differential. The degree one element $\delta$ defined as 
$$
\delta = \sum_{i =1}^r z_i \otimes_{\mathbb{C}} y_i.
$$
satisfied $\delta^2=0.$ Let $J$ denote the differential graded module over $U(\mathfrak{z}/\mathfrak{z}_{\overline{0}}) \otimes_{\mathbb{C}} R$ with graded module and differential given by 
$$
J=U(\mathfrak{z}/\mathfrak{z}_{\overline{0}}) \otimes_{\mathbb{C}} R,\  d(e) = \delta e.
$$

Since $J$ is a differential graded module over $U(\mathfrak{z}/\mathfrak{z}_{\overline{0}}) \otimes_{\mathbb{C}} R,$ for each differential graded module $M$ over $U(\mathfrak{z} / \mathfrak{z}_{\overline{0}})$ there is an induced structure of a differential graded $R$-module on $\Hom_{{U}(\mathfrak{z} / \mathfrak{z}_{\overline{0}})}(J,M).$ Then the functor 
$$
\Hom_{U(\mathfrak{z}/\mathfrak{z}_{\overline{0}})}(J,-) : K(\Inj \mathcal{C}_{ (\mathfrak{z},\mathfrak{z}_{\overline{0}})}) \longrightarrow D(R)
$$
is an equivalence of triangulated categories. 

To see this first observe that as a complex, $J$ consists of injective $U(\mathfrak{z}/\mathfrak{z}_{\overline{0}})$-modules. This follows from the fact that $U(\mathfrak{z}/\mathfrak{z}_{\overline{0}})$ is self-injective. That the $R$ actions coincide follows from \cite[Theorem 5.4]{BIK}, since the R-action on $K(\Inj \mathcal{C}_{ (\mathfrak{z},\mathfrak{z}_{\overline{0}})})$ is also canonical. Finally, the result follows from the equivalence of categories $\Stab(\mathcal{C}_{ (\mathfrak{z},\mathfrak{z}_{\overline{0}})}) \cong K_{\text{ac}}(\Inj \mathcal{C}_{ (\mathfrak{z},\mathfrak{z}_{\overline{0}})})$ and the recollement from \cite[Theorem 3.19]{BIK}. 

\end{proof}

\subsection{} Theorems 4.4.2 and 4.4.3 yield the following result on localizing subcategories of thick tensor ideals for the stable module categories associated with ${\mathfrak z}$. 

\begin{theorem} Let  $\mathfrak{z} = \mathfrak{z}_{\overline{0}} \oplus \mathfrak{z}_{\overline{1}}$ denotes a Lie superalgebra where $ \mathfrak{z}_{\overline{0}} = \mathfrak{t}$ is a torus, and $[ \mathfrak{z}_{\overline{0}},  \mathfrak{z}_{\overline{1}}]=0$. 
\begin{itemize} 
\item[(a)] There is an equivalence of triangulated categories $K(\Inj \mathcal{C}_{ (\mathfrak{z},\mathfrak{z}_{\overline{0}})}) \cong D( S^{\bullet}({\mathfrak{z}_{\overline{1}}}^*))$. 
\item[(b)] The localizing subcategories of $\Stab(\mathcal{C}_{ (\mathfrak{z},\mathfrak{z}_{\overline{0}})})$ are in bijective correspondence with subsets of $\Proj(S^{\bullet}({\mathfrak{z}_{\overline{1}}}^*))$. 
\item[(c)] The thick tensor ideals in $\stab({\mathcal F}_{(\g,\g_{\0})})$ are in bijective correspondence with specialization closed subsets of $\Proj(S^{\bullet}({\mathfrak{z}_{\overline{1}}}^*))$. 
\end{itemize} 
\end{theorem}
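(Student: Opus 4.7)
The plan is to deduce the three parts from the equivalence of triangulated categories constructed in the proof of the preceding stratification theorem, together with the standard classification of localizing subcategories of the derived category of a commutative noetherian graded ring.

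Part (a) is essentially already in hand. The proof of the preceding theorem exhibits the functor
$$
\Hom_{U(\mathfrak{z}/\mathfrak{z}_{\overline{0}})}(J,-) : K(\Inj \mathcal{C}_{(\mathfrak{z},\mathfrak{z}_{\overline{0}})}) \longrightarrow D(R),
$$
where $R = S^{\bullet}(\mathfrak{z}_{\overline{1}}^*)$ and $J$ is the explicit DG Koszul-type object built from the element $\delta = \sum z_i \otimes y_i$, and shows that this is an equivalence of triangulated categories. I would simply invoke that construction and highlight that the hypotheses on $\mathfrak{z}$ (torus even part, zero super bracket) force $U(\mathfrak{z}/\mathfrak{z}_{\overline{0}})$ to be an exterior algebra so that the Koszul duality apparatus applies.

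For part (b), I would transport the classification of localizing subcategories across the equivalence from (a), combined with the recollement $\Stab(\mathcal{C}_{(\mathfrak{z},\mathfrak{z}_{\overline{0}})}) \cong K_{\operatorname{ac}}(\Inj \mathcal{C}_{(\mathfrak{z},\mathfrak{z}_{\overline{0}})})$ mentioned in the previous proof. Neeman's theorem in its graded form identifies the localizing subcategories of $D(R)$ with arbitrary subsets of $\Spec^{h}(R)$. Passing from $K(\Inj)$ to its acyclic part corresponds to quotienting $D(R)$ by the subcategory of perfect complexes, which removes precisely the closed point corresponding to the irrelevant maximal ideal $R_{+} = (y_1,\dots,y_r)$. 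What remains is in bijection with subsets of $\Proj(R)$, which is the claimed classification. This mirrors the argument of \cite[Section 5.2]{BIK} for $\text{Stab}(kE)$, an $E$ elementary abelian $2$-group, and the template transfers directly because $R$ is again polynomial and $U(\mathfrak{z}/\mathfrak{z}_{\overline{0}})$ is again exterior.

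Part (c) follows as a formal consequence of the preceding stratification theorem plus the noetherianness of $R$. Since $\mathbf{K} = \Stab(\mathcal{C}_{(\mathfrak{z},\mathfrak{z}_{\overline{0}})})$ is a compactly generated tensor triangulated category stratified by the noetherian graded ring $R = S^{\bullet}(\mathfrak{z}_{\overline{1}}^*)$, the classification theorem recorded earlier (the second bijection statement of the BIK theorem) gives that thick tensor ideals of $\mathbf{K}^{c} = \stab(\mathcal{F}_{(\mathfrak{z},\mathfrak{z}_{\overline{0}})})$ correspond bijectively to specialization closed subsets of $\supp_{R}(\mathbf{K}) = \Proj(R)$. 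The main obstacle in the whole argument is the verification in (b): one must check carefully that the recollement between $\Stab$ and $K(\Inj)$ respects localizing subcategories, and that quotienting by perfect complexes in $D(R)$ corresponds to the stable projective/injective quotient on the other side, so that the homogeneous maximal ideal is the unique point removed. Once this bookkeeping is in place, the desired bijection with $\Proj(R)$ is immediate.
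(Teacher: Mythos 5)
Your proof is correct and takes essentially the same route as the paper: the paper's own proof is a one-sentence deferral to the preceding stratification theorem (whose proof contains the Koszul-duality equivalence establishing (a)) together with the Benson--Iyengar--Krause $\sigma$--$\tau$ classification theorems, and your proposal simply fleshes out those steps, including the recollement between $K(\Inj)$ and the stable category and the removal of the irrelevant maximal ideal to pass from $\operatorname{Spec}$ to $\Proj$.
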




\section{Nilpotence Theorems}\label{S:Nilpotence} 

In this section, we first recall the important ideas involving the homological spectrum, homological residue fields, and abstract nilpotence theorems in tensor triangulated categories. These concepts were developed by Balmer in \cite{Bal20}. For the ease of exposition it is convenient to modify notation slightly. Unless otherwise stated, $\bK$ will denote a tensor triangulated category satisfying the assumptions from \cite[2.2]{Bal20}, namely $\bK$ is essentially small and rigid. At times for our purposes we view $\bK$ as sitting inside of a ``large" compactly tensor triangulated category $\bT$ with $\bK = \bT^{c}.$ This is in anticipation of applying the ``Balmer machine'' to the categories $\stab(\mathcal{F}_{(\g, \g_{\overline{0}})})$ and $\Stab(\mathcal{C}_{(\g, \g_{\overline{0}})})$ which are the main point of study for this article. 

\subsection{The Homological Spectrum}\label{SS:defHomSpec} The Grothendieck abelian category of right $\bK$-modules denoted $\mathbf{A} = \Mod$-$\bK$ has as objects the additive functors $M: \bK^{\text{op}} \longrightarrow \text{Ab}$ from the opposite category of $\bK$ to the category of abelian groups, and has as morphisms the natural transformations between functors. Let $\text{h}$ denote the Yoneda embedding: 
\begin{align*}
\text{h} : \ \bK &\hookrightarrow \Mod\text{-}\bK =  \Add(\bK^{\text{op}}, \text{Ab})\\
x &\mapsto \hat{x} := \Hom_{\bK}(-,x) \\
f &\mapsto \hat{f}.
\end{align*}
For details about $\bK$-modules, the reader is referred to \cite[Section 2.6]{Bal20} and \cite[Appendix A]{BKS19}. The main facts needed in what follows are that (i) the category $\Mod$-$\bK$ admits a tensor product $\otimes: \Mod$-$\bK \longrightarrow \Mod$-$\bK$ which is right-exact in each variable and which makes $\text{h}$ a tensor functor, (ii) $\text{h}$ preserves rigidity, (iii) $\hat{x}$ is compact, projective, and $\otimes$-flat in $\bA,$ and (iv)  the tensor subcategory $\mathbf{A}^c = \mod$-$\bK$ of compact objects is abelian and is the Freyd envelope of $\bK$ (i.e., $\text{h}: \bK \longrightarrow \mod$-$\bK$ is the universal homological functor out of $\bK$.)  Recall that a functor from a triangulated category to an abelian category is {\em homological} if it maps distinguished triangles to exact sequences. Also, recall that a subcategory $\bB$ of an abelian category is called a {\em Serre subcategory} if it is closed under subobjects, quotients, and extensions. The main definitions of this subsection, which are given in \cite[Definition 3.1]{Bal20} are the following. 
\begin{defn}
\begin{itemize} 
\item [(a)] \normalfont A \emph{coherent homological prime} for $\bK$ is a maximal proper Serre $\otimes$-ideal subcategory $\bB$ of $\bA^c = \mod$-$\bK$ of the Freyd envelope of $\bK.$
\item[(b)] \normalfont The \emph{homological spectrum} of $\bK,$ denoted $\Spc^{\text{h}}(\bK)$ as a set consists of all the homological primes of $\bK:$
$$
\Spc^{\text{h}}(\bK) = \{\mathbf{B} \subsetneq \mod\text{-}\bK \ | \ \bB \text{ is maximal proper Serre ideal}\},
$$ 
and has as topology that generated by the basis of closed subsets $\supp^\text{h}(x)$ for all $x \in \bK,$ where 
$$
\supp^\text{h}(x) = \{\mathbf{B} \in \Spc^\text{h}(\bK) \ | \ \hat{x} \notin \mathbf{B} \}. 
$$
\end{itemize} 
\end{defn}



It should be noted that one can check that $(\Spc^\text{h}, \supp^\text{h})$ is a support data on $\bK.$ Therefore, there exists a unique continuous map 
$$
\phi: \Spc^\text{h}(\bK) \longrightarrow \Spc(\bK)
$$
such that $\supp^\text{h}(x) = \phi^{-1}(\supp(x))$ for all $x \in \bK.$ See \cite[Thm. 3.2]{Bal05}. This map $\phi$ is often called the \emph{comparison map}, and is surjective as long as $\bK$ is rigid.  When $\bK$ is rigid, there are many examples where the comparison map is a bijection. At the time of the writing of this article, all known examples have the property that $\phi$ is bijective. See \cite[Section 5]{Bal20}.  

\subsection{Homological Residue Fields} In this section, we recall Balmer's construction \cite{Bal20} of homological residue fields. One of the main questions in  tensor triangular geometry is to find the appropriate tensor triangular analogue to ordinary fields in commutative algebra. In particular, given $\bK,$ how does one construct functors $F: \bK \longrightarrow \bF$ to its ``residue fields''?  This question is explored in \cite{BKS19}, and some major takeaways are that there are several important properties one would like the notion of field to have. Moreover,  there are many examples of tensor triangulated categories that should be considered as tensor triangulated fields. However, it is not clear exactly what the definition should be. The following definition was proposed in \cite[Definition 1.1]{BKS19}, and will be the running definition in this work. 
\begin{defn}
\normalfont A non-trivial (big) tensor triangulated category $\bF$ is a \emph{tensor triangulated field} if every object of $\bF$ is a coproduct of compact-rigid objects of $\bF^c,$ and if every non-zero object in $\bF$ is tensor-faithful. 
\end{defn}
While this definition encapsulates many of the desired properties of fields, there is not yet a purely tensor triangular construction of them analogous to extracting residue fields in commutative algebra. Instead, Balmer uses the homological spectrum to construct homological tensor functors to abelian categories:

\begin{defn}\label{ttfield}
\normalfont Given a coherent homological prime $\mathbf{B} \in \Spc^\text{h}(\bK),$ the \emph{homological residue field} corresponding to $\mathbf{B}$ is the functor 
$$
\overline{\text{h}}_{\mathbf{B}} = Q_{\mathbf{B}} \circ \text{h} : \bK \hookrightarrow \mathbf{A} = \Mod\text{-}\bK \twoheadrightarrow \overline{\mathbf{A}}(\bK, \mathbf{B}) := \frac{\Mod\text{-}\bK}{\langle \bB \rangle}
$$
composed of the Yoneda embedding followed by the Gabriel quotient. 
\end{defn}

A natural question at this point is whether or not homological residue fields are related to the tensor triangular fields of Definition~\ref{ttfield}. The answer is yes, and an explicit connection useful for the computation of homological residue fields in examples is the content of the following theorem stated in \cite[Lemma 2.2]{BC21}.
\begin{theorem}\label{diagramthm}
Given a big tensor-triangulated category $\bT,$ a tensor-triangulated field $\bF,$ and a monoidal exact functor $F: \bT \longrightarrow \bF$ with right adjoint $U,$ one has the following diagram:


{\normalfont
\[\begin{tikzcd}
	{\mathbf{T}} & {\text{Mod-}\mathbf{T}^c} \\
	{} & {} & {\text{Mod-}\mathbf{T}^c/\Ker(\hat{F})=\overline{\mathbf{A}}_{\mathbf{B}}} \\
	{\mathbf{F}} & {\text{Mod-}\mathbf{F}^c}
	\arrow["F"', shift right=1, from=1-1, to=3-1]
	\arrow["U"', shift right=1, from=3-1, to=1-1]
	\arrow["{\text{h}}", shift left=1, from=3-1, to=3-2]
	\arrow["{\overline{F}}"', shift right=1, from=2-3, to=3-2]
	\arrow["{\text{h}}", from=1-1, to=1-2]
	\arrow["{\hat{F}}"', shift right=1, from=1-2, to=3-2]
	\arrow["{\hat{U}}"', shift right=1, from=3-2, to=1-2]
	\arrow["{\overline{U}}"', shift right=1, from=3-2, to=2-3]
	\arrow["Q"', shift right=1, from=1-2, to=2-3]
	\arrow["R"', shift right=1, from=2-3, to=1-2]
\end{tikzcd}\]
}
\vskip .15cm
\noindent
where $\hat{F}$ is the exact cocontinuous functor induced by $F,$ the functor $Q$ is the Gabriel quotient with respect to $\Ker(\hat{F})$ and the functor $\overline{F}$ is induced by the universal property, hence $\hat{F}=\overline{F}Q$ and $\overline{F}$ is exact and faithful. 

The adjunctions $F \dashv U, \hat{F} \dashv \hat{U}, \overline{F} \dashv \overline{U},$ and $Q \dashv R,$ are depicted with $\hat{F}h = hF$ and $\hat{U}h=hU$. Moreover, $\bB:=\Ker(\hat{F}) \cap \bA^{fp}$ is a homological prime and $\Ker(\hat{F}) = \langle \bB \rangle$ and $\overline{\text{h}}_{\bB}=Q \circ \text{h}: \bT \longrightarrow \overline{\bA}_{\bB}$ is a homological residue field of $\bT$.

\end{theorem}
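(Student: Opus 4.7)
The strategy is to extend the adjunction $F\dashv U$ to the module categories using the universal property of $\Mod\text{-}\bT^c$, identify the kernel of the induced functor $\hat F$ as a Serre $\otimes$-ideal whose Gabriel quotient factors $\hat F$, and finally invoke the tensor-triangulated field hypothesis on $\bF$ to argue that the compact part of this kernel is a \emph{maximal} Serre $\otimes$-ideal in $\mod\text{-}\bT^c$, hence a coherent homological prime.

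First I would construct $\hat F$ and $\hat U$. Since $F$ is a coproduct-preserving monoidal exact functor between compactly generated TTCs whose right adjoint $U$ preserves coproducts, it restricts to $F^c:\bT^c\to\bF^c$. The category $\Mod\text{-}\bT^c$ is the free cocompletion of $\bT^c$, so $\mathrm{h}\circ F^c$ extends uniquely to an exact cocontinuous tensor functor $\hat F:\Mod\text{-}\bT^c\to\Mod\text{-}\bF^c$ automatically satisfying $\hat F\circ \mathrm{h} = \mathrm{h}\circ F$. The right adjoint $\hat U$ is obtained by pre-composition with $F^c$, and the identity $\hat U\circ \mathrm{h} = \mathrm{h}\circ U$ is then a direct consequence of the Yoneda lemma applied to the adjunction $F\dashv U$. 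Because $\hat F$ is a cocontinuous tensor functor out of a Grothendieck category, $\Ker(\hat F)$ is a localizing Serre $\otimes$-ideal in $\bA$; the standard Gabriel localization yields the exact quotient $Q:\bA\to \overline{\bA}_{\bB}$ with fully faithful right adjoint $R$, and by the universal property of $Q$ the functor $\hat F$ factors uniquely as $\hat F=\overline F\circ Q$ for an exact faithful $\overline F$, whose right adjoint $\overline U$ arises by composing $\hat U$ with $Q$. Since $\hat F$ preserves filtered colimits and every module is a filtered colimit of finitely presented ones, $\Ker(\hat F)$ is generated by $\bB:=\Ker(\hat F)\cap \bA^{fp}$, so $\Ker(\hat F)=\langle \bB\rangle$. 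The subcategory $\bB\subseteq \mod\text{-}\bT^c$ is a Serre $\otimes$-ideal by construction, and it is proper because $\hat F(\widehat{1_{\bT}}) = \widehat{F(1_{\bT})} = \widehat{1_{\bF}} \neq 0$.

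The heart of the argument, and the main obstacle, is upgrading $\bB$ from a proper Serre $\otimes$-ideal to a maximal one. The plan is to fix $N\in\mod\text{-}\bT^c\setminus \bB$ and show that the Serre $\otimes$-ideal generated by $\bB\cup\{N\}$ is all of $\mod\text{-}\bT^c$. Applying the faithful exact functor $\overline F$, this reduces to showing that $\overline F(Q(N))\neq 0$ generates $\mod\text{-}\bF^c$ as a Serre $\otimes$-ideal. Here the hypothesis that $\bF$ is a tensor-triangulated field is decisive: every non-zero object of $\bF$ is $\otimes$-faithful, and every object of $\bF$ is a coproduct of compact-rigid objects. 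Combining $\otimes$-faithfulness with rigidity allows one to transport the faithfulness property from $\bF$ to finitely presented objects of $\Mod\text{-}\bF^c$, forcing any non-zero such module to generate the whole Serre $\otimes$-ideal $\mod\text{-}\bF^c$. Pulling back through $\overline F$ gives maximality of $\bB$ and identifies $\overline{\mathrm{h}}_{\bB}=Q\circ \mathrm{h}$ with a homological residue field of $\bT$. This final translation from ``$\otimes$-faithful in $\bF$'' to ``generates the whole Serre $\otimes$-ideal in $\mod\text{-}\bF^c$'' is the step that requires the full strength of the TT-field hypothesis and is where I expect the real work to lie.
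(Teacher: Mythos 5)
The paper does not prove this statement; it imports it verbatim from Balmer--Cameron \cite[Lemma~2.2]{BC21}, which in turn packages results from \cite{Bal20} and \cite{BKS19}. So there is no in-paper proof for your proposal to be measured against, and what you have written should be judged as a reconstruction of the Balmer--Cameron argument.

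Your outline of the formal package --- $\hat F$ as the exact cocontinuous extension from the free cocompletion, $\hat U$ by precomposition with $F^c$, identification of $\Ker(\hat F)$ as a localizing Serre $\otimes$-ideal generated by its finitely presented part because $\hat F$ commutes with filtered colimits, the Gabriel factorization $\hat F=\overline F\circ Q$ with $\overline F$ exact and faithful, and properness of $\bB$ from $\hat F(\hat{\unit})\neq 0$ --- is correct and matches the setup in \cite{BC21}. The use of the tensor-triangulated field hypothesis to conclude that $\mod\text{-}\bF^{c}$ is a ``simple'' tensor-abelian category (no nonzero proper Serre $\otimes$-ideals) is also the right ingredient; that is indeed a result in the direction of \cite{BKS19}.

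The genuine gap is in your maximality argument. You claim: ``Applying the faithful exact functor $\overline F$, this reduces to showing that $\overline F(Q(N))\neq 0$ generates $\mod\text{-}\bF^c$ as a Serre $\otimes$-ideal. \ldots Pulling back through $\overline F$ gives maximality of $\bB$.'' This reduction is not valid. Knowing that $\overline F(Q(N))$ generates all of $\mod\text{-}\bF^c$ does \emph{not} imply that $Q(N)$ generates all of $\overline{\bA}_{\bB}^{fp}$, because faithfulness of $\overline F$ lets you push containments forward, not pull generation back: $\overline F$ need not be full, and the Serre $\otimes$-ideal $\langle\overline F(Q(N))\rangle$ is built by tensoring with arbitrary objects of $\mod\text{-}\bF^c$, most of which are not in the image of $\overline F$. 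The missing ingredient is the rigidity of $\bK=\bT^c$ and the resulting duality/projection-formula structure on $\Mod\text{-}\bK$ and on the quotient (equivalently, the lax-monoidal right adjoint $\overline U$ and the fact that $\unit$ sits as a retract inside an object of the form $N\otimes\overline U(\text{--})$ when $\overline F(N)\neq 0$). In Balmer's treatment this is precisely what makes kernels of exact monoidal functors into simple tensor-abelian categories automatically \emph{maximal} (cf.\ \cite[Remark~3.11, Theorem~5.6]{Bal20}); without that input your last step is an implication running in the wrong direction. You flag this as ``where the real work lies,'' which is honest, but the issue is not only that the step needs the TT-field hypothesis --- the stated reduction itself is incorrect and would have to be replaced by the rigidity/projection-formula argument.
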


\subsection{Nilpotence and Colimits} \label{SS:NilColimit} In this section we clarify the notions of nilpotence in the stable categories of Lie superalgebra representations and relate them to colimit constructions in module categories and homotopy colimits in the stable categories. We first discuss the concept of nilpotence. 

\begin{defn}

Let $M$ and $N$ be modules in $\mathcal{C}_{(\g, \g_{\overline{0}})}$.

\begin{itemize}

\item[(a)] A map $f: M \longrightarrow N$ is called \emph{null} if $f=0$ in $\Stab(\mathcal{C}_{(\g, \g_{\overline{0}})})$; i.e., $f$ is null if and only if $f$ factors through a projective module.

\item[(b)] A map $f: M \longrightarrow N$ is called \emph{tensor nilpotent} if there exists some $n \in \mathbb{Z}_{\geq 0}$ such that the tensor power $f^{\otimes n}: M^{\otimes n} \longrightarrow N^{\otimes n}$ is null. 

\end{itemize}
\end{defn}

In the case when $M$ is compact, one can transform the condition of the nilpotence of the map $f$ to the adjoint map. 

\begin{lemma} Let $M$ be a compact object. 
A map $f: M \longrightarrow N$ is tensor nilpotent if and only if the adjoint map $\hat{f}: k \longrightarrow M^{*} \otimes N$ is tensor nilpotent. 
\end{lemma}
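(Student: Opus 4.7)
The plan is to exploit the duality adjunction afforded by rigidity. Since $M$ is compact, and in the setting $\bK = \Stab(\mathcal{C}_{(\g,\g_{\overline{0}})})$ compact objects coincide with rigid (strongly dualizable) objects, we have a natural isomorphism
\[
\Hom_{\bK}(M,N) \xrightarrow{\sim} \Hom_{\bK}(k, M^{*} \otimes N), \qquad g \mapsto \hat{g},
\]
where $k$ denotes the tensor unit and $\hat{g}$ is the composite $k \xrightarrow{\mathrm{coev}_{M}} M^{*} \otimes M \xrightarrow{1 \otimes g} M^{*} \otimes N$. The strategy is to apply this adjunction \emph{at each tensor power}, after verifying that adjunction commutes with taking tensor powers.

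First I would record that the adjunction descends to the stable category: because the unit and counit of the duality are genuine morphisms in $\bK$ and the isomorphism is natural, a morphism $g$ is null (i.e.\ zero in $\bK$) if and only if $\hat g$ is null. In particular, for any $n \geq 0$, the map $f^{\otimes n}$ is null if and only if its adjoint $\widehat{f^{\otimes n}} : k \to (M^{\otimes n})^{*} \otimes N^{\otimes n}$ is null.

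Next I would identify $\widehat{f^{\otimes n}}$ with $\hat f^{\otimes n}$. Since $M$ is rigid, so is $M^{\otimes n}$, and the canonical isomorphism $(M^{\otimes n})^{*} \cong (M^{*})^{\otimes n}$ (arising because the $n$-fold coevaluation factors as the tensor product of coevaluations, up to the symmetric braiding) together with the braiding isomorphism $(M^{*})^{\otimes n} \otimes N^{\otimes n} \cong (M^{*} \otimes N)^{\otimes n}$ fits into a commutative diagram identifying the composite
\[
k \xrightarrow{\mathrm{coev}_{M^{\otimes n}}} (M^{\otimes n})^{*} \otimes M^{\otimes n} \xrightarrow{1 \otimes f^{\otimes n}} (M^{\otimes n})^{*} \otimes N^{\otimes n}
\]
with $\hat f^{\otimes n} : k \to (M^{*} \otimes N)^{\otimes n}$. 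This identification is natural, so in particular $\widehat{f^{\otimes n}}$ is null if and only if $\hat f^{\otimes n}$ is null.

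Combining the two steps, $f^{\otimes n} = 0$ in $\bK$ if and only if $\hat f^{\otimes n} = 0$ in $\bK$, proving the equivalence. The only nonformal point is the coherence argument in the second step relating $\mathrm{coev}_{M^{\otimes n}}$ to the tensor power of $\mathrm{coev}_{M}$, which is standard symmetric monoidal bookkeeping; I expect this to be the main (but routine) obstacle.
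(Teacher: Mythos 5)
Your proof is correct and uses essentially the same approach as the paper: exploiting rigidity of the compact object $M$ so that the adjunction $\Hom_{\bK}(M,N)\cong\Hom_{\bK}(k,M^{*}\otimes N)$ is compatible with tensor powers and with nullity in the stable category. The paper phrases the compatibility step more tersely (via ``tensor products of projectives are projective''), whereas you spell out the coherence identification $\widehat{f^{\otimes n}}\cong\hat f^{\otimes n}$ and argue both directions symmetrically, but the underlying idea is identical.
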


\begin{proof}
By adjointness, we have an isomorphism $\Hom(M,N) \cong \Hom(k, M^{*} \otimes N)$. Since $f$ is tensor nilpotent, there exists some $n$ such that $f^{\otimes n}: M^{\otimes n} \longrightarrow N^{\otimes n}$ factors through a projective. But since tensor products of projective modules are projective, this implies that $\hat{f}^{\otimes n}$ factors through a projective; i.e., that $\hat{f}$ is tensor nilpotent. 
\end{proof}

Next we need to recall the definition of a colimit in the category $\mathcal{C}_{(\g,\g_{\0})}$ and a homotopy colimit in its stable module category. 

\begin{defn} Let ${\mathfrak g}$ be a classical Lie superalgebra. 
\begin{itemize} 
\item[(a)] Let 
$$
\theta : N_1 \overset{f_1}\longrightarrow N_2 \overset{f_2}\longrightarrow N_3 \overset{f_3}\longrightarrow \dots 
$$
be a system of modules and homomorphisms in $\mathcal{C}_{(\fg,\fg_{\0})}$. Let $\gamma: \bigoplus_{i=1}^{\infty}N_i \longrightarrow \bigoplus_{i=1}^{\infty}N_i$ be defined by $\gamma(m)=m-f_i(m)$ whenever $m \in N_i$. The \emph{colimit} of the system, if ti exists, is the module given by $\text{coker } \gamma$.
\item[(b)] Let
$$
\theta : X_1 \overset{f_1}\longrightarrow  X_2 \overset{f_2}\longrightarrow X_3 \overset{f_3}\longrightarrow \dots 
$$
be a system of modules and homomorphisms in $\Stab(\mathcal{C}_{(\g, \g_{\overline{0}})})$. The \emph{homotopy colimit} of the system is the module obtained by completing the map 
$$
\bigoplus X_i \overset{1-f}\longrightarrow \bigoplus X_i
$$

to a triangle: 

$$
\bigoplus X_i \overset{1-f}\longrightarrow \bigoplus X_i \longrightarrow \text{hocolim}(X_i) \longrightarrow . 
$$
\end{itemize} 
\end{defn}

The following lemmas are given in \cite{Ric97} and are modified here for Lie superalgebra representations. 

\begin{lemma}\label{lemma1}
Let $X_1 \overset{\alpha_1}\longrightarrow X_2 \overset{\alpha_2}\longrightarrow \cdots$ be a sequence of maps in a triangulated category with countable direct sums. If for each $i>0$ there exists $k>i$ such that $\alpha_1\dots\alpha_k = 0$, then $\text{hocolim}(X_i) \cong 0$.
\end{lemma}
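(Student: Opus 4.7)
The plan is to reduce the claim to the statement that the structural map $1-f\colon \bigoplus_i X_i \to \bigoplus_i X_i$ appearing in the defining triangle of the homotopy colimit is an isomorphism. Once this is established, the third vertex of a distinguished triangle whose first map is an isomorphism must be zero, so the triangle
\[
\bigoplus X_i \xrightarrow{\,1-f\,} \bigoplus X_i \longrightarrow \text{hocolim}(X_i) \longrightarrow
\]
immediately yields $\text{hocolim}(X_i)\cong 0$.

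To produce an inverse to $1-f$, I would construct a telescoping morphism $r\colon \bigoplus_i X_i \to \bigoplus_i X_i$ by specifying its restriction to each summand via the ``geometric series'' formula
\[
r|_{X_i} \;=\; \iota_i \;+\; \iota_{i+1}\circ\alpha_i \;+\; \iota_{i+2}\circ\alpha_{i+1}\circ\alpha_i \;+\;\cdots,
\]
where $\iota_j\colon X_j\hookrightarrow \bigoplus_k X_k$ denotes the coproduct inclusion. The hypothesis is precisely what makes this meaningful: for each fixed $i$ there is some $k>i$ for which the composition $\alpha_k\circ\alpha_{k-1}\circ\cdots\circ\alpha_i$ is zero, so the formal series for $r|_{X_i}$ is in fact a finite sum of morphisms, hence a well-defined map $X_i \to \bigoplus_k X_k$. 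The family $\{r|_{X_i}\}_{i\geq 1}$ then assembles, via the universal property of the coproduct, into a single morphism $r$.

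The remaining step is a direct telescoping computation verifying $(1-f)\circ r = \text{id}$ and $r\circ(1-f) = \text{id}$ on each summand. For the first identity, applying $1-f$ to the $n$-th term of $r|_{X_i}$ produces $\iota_{i+n}\circ\alpha_{i+n-1}\cdots\alpha_i - \iota_{i+n+1}\circ\alpha_{i+n}\cdots\alpha_i$; summing over $n$ collapses all interior terms and leaves $\iota_i$ together with a single tail of the form $-\iota_{i+N+1}\circ\alpha_{i+N}\cdots\alpha_i$, which vanishes by the hypothesis for sufficiently large $N$. The identity $r\circ (1-f)|_{X_i} = r|_{X_i} - r|_{X_{i+1}}\circ\alpha_i = \iota_i$ follows by the same cancellation pattern, but is immediate from the definition of $r$ since every term of $r|_{X_{i+1}}\circ\alpha_i$ appears in $r|_{X_i}$.

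The main (mild) obstacle is purely at the level of setup rather than substance: one must justify that the telescoping formula defines a bona fide morphism in an abstract triangulated category with countable coproducts, where arbitrary ``infinite sums'' of morphisms need not make sense. The nilpotence-type hypothesis on eventual vanishing of the compositions is exactly what reduces each local series to a finite sum of honest morphisms, after which the verification that $r$ inverts $1-f$ is formal.
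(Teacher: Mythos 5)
Your proof is correct, and it is essentially the classical argument from Rickard's paper (the source the paper itself cites for this lemma without reproducing a proof): one exhibits $1-f$ as an isomorphism by constructing the finite ``geometric series'' inverse, and then the cofiber $\text{hocolim}(X_i)$ of an isomorphism vanishes. The telescoping verifications of $(1-f)\circ r=\operatorname{id}$ and $r\circ(1-f)=\operatorname{id}$ are exactly right, and you correctly identified the sole technical point --- that the eventual-vanishing hypothesis is what makes each local series a genuine finite sum of morphisms, so no infinite sums need to be invoked in the ambient triangulated category.

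One small remark: the lemma as printed reads ``$\alpha_1\cdots\alpha_k=0$,'' which, taken literally, is both type-incorrect as a composition and, even under the charitable reading $\alpha_k\circ\cdots\circ\alpha_1=0$, an insufficient hypothesis (e.g.\ $X_1=0$, $\alpha_1=0$, and $\alpha_i=\operatorname{id}$ thereafter gives a nonzero hocolim). You silently and correctly read it as ``for each $i$ there is $k>i$ with $\alpha_k\circ\cdots\circ\alpha_i=0$,'' which is the hypothesis in Rickard's original lemma and the one your construction of $r|_{X_i}$ uses; it would be worth stating explicitly that this is the intended reading.
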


The next lemma clarifies the relationship between homotopy colimits in the stable category with colimits in the ordinary module category. 
\begin{lemma}\label{lemma2}
Let $X_1 \overset{\alpha_1}\longrightarrow X_2 \overset{\alpha_2}\longrightarrow \cdots$ be a sequence of modules and homomorphisms in $\Stab(\mathcal{C}_{(\g, \g_{\overline{0}})})$. The colimit $\text{colim}(X_i)$ in $\mathcal{C}_{(\g, \g_{\overline{0}})}$ is isomorphic in $\Stab(\mathcal{C}_{(\g, \g_{\overline{0}})})$ to the homotopy colimit $\text{hocolim}(X_i)$. 
\end{lemma}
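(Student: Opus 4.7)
The plan is to promote a short exact sequence in $\mathcal{C}_{(\g,\g_{\0})}$ to a distinguished triangle in $\Stab(\mathcal{C}_{(\g,\g_{\0})})$ and then recognize the result as (isomorphic to) the defining triangle of the homotopy colimit.

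First I would verify that the structural map
\[ 1 - f : \bigoplus_i X_i \longrightarrow \bigoplus_i X_i, \qquad (1-f)|_{X_i} = \mathrm{id}_{X_i} - \alpha_i, \]
is injective in $\mathcal{C}_{(\g,\g_{\0})}$. For a finitely supported tuple $(m_1, \dots, m_N, 0, 0, \dots)$ with $m_N \neq 0$, the image $(1-f)(m)$ has an uncancelled component $-\alpha_N(m_N)$ in the summand $X_{N+1}$, so vanishing of $(1-f)(m)$ forces $\alpha_N(m_N) = 0$, and then an induction proceeding downward through the indices forces every $m_i = 0$. Since by definition the colimit in $\mathcal{C}_{(\g,\g_{\0})}$ is the cokernel of $1-f$, this yields a short exact sequence
\[ 0 \longrightarrow \bigoplus_i X_i \overset{1-f}\longrightarrow \bigoplus_i X_i \longrightarrow \mathrm{colim}(X_i) \longrightarrow 0 \]
in $\mathcal{C}_{(\g,\g_{\0})}$.

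Next I would pass to the stable category. Because $\mathcal{C}_{(\g,\g_{\0})}$ is Frobenius, the standard construction of the triangulated structure on $\Stab(\mathcal{C}_{(\g,\g_{\0})})$ converts the short exact sequence above into a distinguished triangle
\[ \bigoplus_i X_i \overset{1-f}\longrightarrow \bigoplus_i X_i \longrightarrow \mathrm{colim}(X_i) \longrightarrow \Sigma \bigoplus_i X_i. \]
The defining triangle of $\mathrm{hocolim}(X_i)$ has identical first two terms and identical first morphism $1-f$.

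To conclude, I would invoke the fact that two distinguished triangles in a triangulated category that agree on their first morphism admit a morphism of triangles extending the identity on the first two terms, and the triangulated five-lemma promotes that morphism to an isomorphism on the third object. This yields the desired isomorphism $\mathrm{colim}(X_i) \cong \mathrm{hocolim}(X_i)$ in $\Stab(\mathcal{C}_{(\g,\g_{\0})})$. The only step requiring real care is the injectivity check for $1-f$; everything else is a formal consequence of the Frobenius structure, adapting Rickard's argument from the group algebra setting without modification.
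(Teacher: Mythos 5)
Your overall strategy --- establish injectivity of $1-f$, package the cokernel (= colimit) into a short exact sequence in $\mathcal{C}_{(\g,\g_{\0})}$, promote that conflation to a distinguished triangle in the Frobenius stable category, and then compare with the defining triangle of $\operatorname{hocolim}$ via TR3 and the triangulated five-lemma --- is exactly Rickard's argument, which is what the paper cites for this lemma. So the approach is the right one.

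The injectivity check, however, does not work as you have stated it. You observe that the $X_{N+1}$-component of $(1-f)(m)$ is $-\alpha_N(m_N)$ and conclude $\alpha_N(m_N)=0$; but $\alpha_N$ is an arbitrary morphism, so $\alpha_N(m_N)=0$ does not force $m_N=0$. Moreover the equations available for a ``downward induction'' are $m_j = \alpha_{j-1}(m_{j-1})$, which merely express each coordinate in terms of the one \emph{below} it --- no vanishing follows from them alone. The correct argument runs in the opposite direction: the $X_1$-component of $(1-f)(m)$ is $m_1$ itself (there is nothing in a lower summand to cancel against it), so $m_1=0$; then the $X_2$-component gives $m_2 = \alpha_1(m_1)=0$; and so on upward until $m_N=0$. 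Once the induction is turned around in this way, the injectivity of $1-f$ holds and the remainder of your proof (conflation $\Rightarrow$ triangle, then the five-lemma) goes through unchanged.
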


These two lemmas together allow one to derive an analog of \cite[Lemma 2.3]{BC18}

\begin{theorem} \label{T:Rickard} 
A map $f: k \longrightarrow N$ is $\otimes$-nilpotent if and only if the colimit of 
$$
k \overset{f}\longrightarrow N \overset{f \otimes 1}\longrightarrow N \otimes N \overset{f \otimes 1 \otimes 1}\longrightarrow N \otimes N \otimes N \longrightarrow \cdots
$$
is projective. 
\end{theorem}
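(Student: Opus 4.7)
The plan is to mimic Benson--Carlson's argument \cite[Lemma 2.3]{BC18} in the Lie superalgebra setting, combining the two preceding lemmas with the compactness of the unit $k = \mathbb{C}$ in $\Stab(\mathcal{C}_{(\g,\g_{\0})})$. Writing $X_i = N^{\otimes i}$ with $X_0 = k$, and letting $\alpha_i : X_{i-1} \to X_i$ denote the $i$-th map $f \otimes 1_{X_{i-1}}$ (with $\alpha_1 = f$), a direct computation in the symmetric monoidal structure gives $\alpha_i \circ \cdots \circ \alpha_1 = f^{\otimes i}$, and more generally $\alpha_{j+i} \circ \cdots \circ \alpha_{j+1} = f^{\otimes i} \otimes 1_{X_j}$ for all $i,j \geq 0$.

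For the forward direction, suppose $f$ is $\otimes$-nilpotent, with $f^{\otimes n}$ factoring through a projective $P$ in $\mathcal{C}_{(\g,\g_{\0})}$. Then for each $j$ the composition $\alpha_{j+n} \circ \cdots \circ \alpha_{j+1}$ equals $f^{\otimes n} \otimes 1_{X_j}$ and thus factors through $P \otimes X_j$, which remains projective since projectivity in $\mathcal{C}_{(\g,\g_{\0})}$ is preserved under tensoring with arbitrary objects. Hence each such composition is null in the stable category, so \cref{lemma1} yields that $\text{hocolim}(X_i)$ is zero in $\Stab(\mathcal{C}_{(\g,\g_{\0})})$, and \cref{lemma2} identifies this with the ordinary colimit in $\mathcal{C}_{(\g,\g_{\0})}$, which is therefore projective. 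For the converse, assume the colimit is projective, so by \cref{lemma2} the homotopy colimit is zero in $\Stab(\mathcal{C}_{(\g,\g_{\0})})$, and in particular the structure map $\iota_0 : X_0 = k \to \text{hocolim}(X_i)$ vanishes stably. Since $k$ is finite-dimensional it lies in $\stab(\mathcal{F}_{(\g,\g_{\0})})$, hence is a compact object of $\Stab(\mathcal{C}_{(\g,\g_{\0})})$; applying $\Hom_{\Stab}(k,-)$ to the triangle
\[
\bigoplus X_i \xrightarrow{1-\alpha} \bigoplus X_i \longrightarrow \text{hocolim}(X_i) \longrightarrow
\]
and using compactness gives
\[
\Hom_{\Stab}(k, \text{hocolim}(X_i)) \cong \operatorname{colim}_i \Hom_{\Stab}(k, X_i).
\]
Under this isomorphism $\iota_0$ is the image of $1_k \in \Hom_{\Stab}(k, X_0)$, and its vanishing forces $1_k$ to be annihilated at some finite stage $i$; that is, $f^{\otimes i} = \alpha_i \circ \cdots \circ \alpha_1 = 0$ in $\Stab$, which is tensor nilpotence of $f$.

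The main subtlety is ensuring that $P \otimes X_j$ is projective for all $j$, including when $X_j$ is infinite-dimensional, so that nullity propagates through the sequence in the forward direction; this is a standard property of the Frobenius category $\mathcal{C}_{(\g,\g_{\0})}$ already invoked elsewhere in the paper. The other delicate point, the identification of $\Hom_{\Stab}(k,-)$ with a filtered colimit on the homotopy colimit, is routine once one knows that $k$ lies in $\Stab(\mathcal{C}_{(\g,\g_{\0})})^c = \stab(\mathcal{F}_{(\g,\g_{\0})})$; everything else is symmetric-monoidal bookkeeping and an application of \cref{lemma1} and \cref{lemma2}.
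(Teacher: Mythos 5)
Your proof is correct and takes essentially the same route as the paper's (reduce to homotopy colimits via \cref{lemma1} and \cref{lemma2}), but it is considerably more complete: the paper's four-sentence proof leaves the hypothesis of \cref{lemma1} unverified and merely asserts that a vanishing homotopy colimit ``gives the tensor nilpotence of $f$.'' You supply the missing ingredients. In the forward direction, the explicit monoidal identity $\alpha_{j+n}\circ\cdots\circ\alpha_{j+1} = f^{\otimes n}\otimes 1_{X_j}$, together with the fact that projectives in $\mathcal{C}_{(\g,\g_{\0})}$ are stable under tensoring with arbitrary objects, is exactly what is needed to invoke \cref{lemma1}, and the paper elides it. In the converse direction, the paper's conclusion genuinely requires the compactness of the unit $k$ (as in the cited \cite[Lemma 2.3]{BC18}): your argument that $\Hom_{\Stab}(k,\mathrm{hocolim}(X_i)) \cong \operatorname{colim}_i\Hom_{\Stab}(k,X_i)$, hence that $[1_k]$ dies at a finite stage, is the precise content of the step the paper compresses into one clause. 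One small point worth flagging: the stability of projectives under $-\otimes X_j$ is not a consequence of the Frobenius property per se but of the Hopf-algebraic (tensor identity) structure of $\mathcal{C}_{(\g,\g_{\0})}$; the fact itself is standard and used elsewhere in the paper, so this is a matter of attribution rather than correctness.
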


\begin{proof}
First suppose that $f: k \longrightarrow N$ is $\otimes$-nilpotent. Then Lemma~\ref{lemma1} implies that the homotopy colimit of this system viewed in the stable category is isomorphic to zero, which is to say the colimit of the system is zero by Lemma~\ref{lemma2}. Now suppose that the colimit of the system is projective. Again, when viewed in the stable category this implies that the homotopy colimit is zero, which gives the tensor nilpotence of $f$.
\end{proof}

\subsection{Nilpotence Theorems} Nilpotence theorems have played an important role in cohomology and representation theory. Devinatz, Hopkins, and Smith showed in \cite{DHS88} that a map between finite spectra which gets annihilated by all Morava $K$-theories must be tensor-nilpotent. Neeman \cite{Nee92} and Thompson \cite{Tho97} proved nilpotence theorems for maps in derived categories using ordinary residue fields, and Benson, Carlson, and Rickard \cite{BCR97} proved nilpotence theorems in modular representation theory, where the residue fields are given by cyclic shifted subgroups, or, in the case of finite group schemes, $\pi$-points \cite{FP07}. With these examples in mind, Balmer in \cite{Bal20} using homological residue fields.presented a unified treatment that applies to all tensor triangulated categories. In the case where $\bK$ sits inside of a big rigidly compactly generated tensor triangulated category $\bT$ with $\bK = \bT^c,$ one can make a connection to the homological spectrum. In particular, he proved the following theorem \cite[Corollary 4.7]{Bal20}: 


\begin{theorem}\label{abstractnilpotencebig}
Let $\bT$ be a rigidly-compactly generated ``big" tensor-triangulated category with $\bK = \bT^c.$ Let $f: x \longrightarrow Y$ be a morphism in $\bT$ with $x \in \bK$ compact and $Y$ arbitrary. If $\overline{\emph{h}}(f)=0$ in $\overline{\bA}(\bK;\bB)$ for every homological residue field $\overline{\emph{h}}_{\bB}$ for every homological prime $\bB \subsetneq \mod$-$\bK,$ then there exists $n \geq 1$ such that $f^{\otimes n}=0$ in $\bT.$
\end{theorem}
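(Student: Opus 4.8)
The plan is to reproduce Balmer's argument from \cite[Section~4]{Bal20}, transporting the problem along the Yoneda embedding $\text{h}\colon\bT\to\Mod\text{-}\bK$ into the abelian category $\mod\text{-}\bK$, where the homological residue fields become Gabriel quotients with good exactness properties, and then converting the hypothesis into a statement about the cofiber of $f$.

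First I would reduce to the case $x=\unit$. Since $x\in\bK=\bT^{c}$ is rigid, $f$ has a mate $\tilde f\colon\unit\to x^{\vee}\otimes Y$, and $f^{\otimes n}=0$ if and only if $\tilde f^{\otimes n}=0$. Each homological residue field $\overline{\text{h}}_{\bB}=Q_{\bB}\circ\text{h}$ is monoidal and preserves rigidity (because $\text{h}$ does, by the basic facts about $\text{h}$ recalled above, and $Q_{\bB}$ is a Gabriel quotient by the $\otimes$-ideal generated by $\bB$), so it carries mates to mates; hence $\overline{\text{h}}_{\bB}(f)=0$ for all $\bB$ exactly when $\overline{\text{h}}_{\bB}(\tilde f)=0$ for all $\bB$, and we may assume $f\colon\unit\to Y$.

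Now complete $f$ to a distinguished triangle $\unit\xrightarrow{f}Y\to C\to\Sigma\unit$ in $\bT$, where $C$ denotes the cofiber of $f$, and apply the homological functor $\overline{\text{h}}_{\bB}$. Using that $\Sigma$ is the functor $\Sigma\unit\otimes-$ and that $\overline{\text{h}}_{\bB}$ is monoidal, one gets $\overline{\text{h}}_{\bB}(\Sigma f)=\overline{\text{h}}_{\bB}(\Sigma\unit)\otimes\overline{\text{h}}_{\bB}(f)=0$; hence the long exact sequence attached to the triangle degenerates into a short exact sequence $0\to\overline{\text{h}}_{\bB}(Y)\to\overline{\text{h}}_{\bB}(C)\to\overline{\text{h}}_{\bB}(\Sigma\unit)\to 0$. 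Since $\Sigma\unit$ is $\otimes$-invertible, $\overline{\text{h}}_{\bB}(\Sigma\unit)$ is $\otimes$-invertible in $\overline{\bA}_{\bB}$, hence nonzero as $\bB$ is proper; being a quotient of $\overline{\text{h}}_{\bB}(C)$ it forces $\overline{\text{h}}_{\bB}(C)\neq0$. Thus $\overline{\text{h}}_{\bB}(C)\neq0$ for \emph{every} homological prime $\bB$, i.e.\ the homological support of $C$ is all of $\Spc^{\text{h}}(\bK)$. Since this support satisfies $\supp^{\text{h}}=\phi^{-1}\circ\supp$ and $\phi\colon\Spc^{\text{h}}(\bK)\to\Spc(\bK)$ is surjective ($\bK$ being rigid), we conclude $\supp(C)=\Spc(\bK)$, and Balmer's classification of thick $\otimes$-ideals \cite{Bal05} then places the unit $\unit$ in the thick $\otimes$-ideal generated by $C$.

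It remains to deduce that $f$ is $\otimes$-nilpotent from the membership of $\unit$ in the thick $\otimes$-ideal generated by $C$. This is the classical ``generation implies nilpotence'' step, going back to the Hopkins--Neeman--Thomason circle of ideas: using that $\Sigma C$ is a direct summand of $C^{\otimes2}$ (valid precisely because the source of $f$ is $\unit$), a Koszul-style induction on the length of a finite presentation of $\unit$ out of $C$ by cones, retracts and tensoring yields $f^{\otimes n}=0$ for a suitable $n$. I expect the principal obstacle to be carrying out this whole chain with an \emph{arbitrary}, possibly non-compact, target $Y$: then $C$ and the auxiliary objects are ``big,'' and one must verify that the homological support of the non-compact object $C$ still obeys $\supp^{\text{h}}(C)=\phi^{-1}(\supp(C))$ and that the classification step legitimately places the \emph{compact} object $\unit$ inside a \emph{thick} (rather than merely localizing) $\otimes$-ideal generated by $C$. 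These technical points are exactly what is handled in \cite[Section~4]{Bal20}, which is why the theorem is credited to Balmer.
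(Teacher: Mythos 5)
This statement is not proved in the paper at all: it is quoted verbatim as \cite[Corollary 4.7]{Bal20}, so the only "proof" on offer is Balmer's, and Balmer's argument is structurally different from yours. Balmer works entirely inside the Freyd envelope: he transports $f$ to $\hat f=\text{h}(f)$ in $\Mod$-$\bK$ via restricted Yoneda, proves by a Zorn's-lemma argument on Serre $\otimes$-ideals that a finitely presented object lying in every maximal Serre $\otimes$-ideal is $\otimes$-nilpotent in $\Mod$-$\bK$, and then uses that restricted Yoneda is faithful and monoidal on morphisms out of compact objects (so $\text{h}(f^{\otimes n})=\text{h}(f)^{\otimes n}=0$ forces $f^{\otimes n}=0$ even for big $Y$). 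At no point does he pass through $\Spc(\bK)$, the classification of thick $\otimes$-ideals, or the surjectivity of $\phi$.

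Your route has a genuine gap exactly at the point you flag and then wave away. The cofiber $C$ of $f:\unit\to Y$ is not compact when $Y$ is arbitrary, and everything after your short exact sequence step lives in the compact world: $\supp^{\text{h}}$ is defined via $\hat x\notin\bB$ for $x\in\bK$, the identity $\supp^{\text{h}}=\phi^{-1}\circ\supp$ is the universal property of $\Spc$ applied to a support datum on the \emph{small} category, and the classification of thick $\otimes$-ideals only lets you conclude $\unit\in\Thick^{\otimes}\langle C\rangle$ when $C$ is compact. Deferring these points to \cite[Section 4]{Bal20} does not close the gap, because that section does not extend support theory to non-compact objects -- it avoids the spectrum entirely, as described above. (For $Y$ compact your argument is a legitimate, if heavier, alternative derivation of Balmer's Theorem 4.4, resting on the surjectivity of $\phi$, the classification of thick $\otimes$-ideals, and Hopkins' lemma; but the whole point of Corollary 4.7 is the non-compact target.) One smaller error: the parenthetical claim that $\Sigma C$ is a direct summand of $C^{\otimes 2}$ whenever $f$ has source $\unit$ is false -- the mod-$2$ Moore spectrum is the standard counterexample, since $C\otimes C\cong (Y\otimes C)\oplus\Sigma C$ would force $f\otimes\operatorname{id}_C=0$. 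The Hopkins--Neeman induction does not need this splitting, but as stated that step is wrong.
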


The nilpotence theorem stated above can combined with the theory of detecting subalgebras developed by Boe, Kujawa, and Nakano, to the study of nilpotence in the stable categories of Lie superalgebra representations. The following nilpotence theorem via homological residue fields is a direct translation of Theorem~\ref{abstractnilpotencebig} in the context of superalgebra representatiions.



\begin{theorem}\label{T:nilpotencetwo}
Let $\fg = \fg_{\0} \oplus \fg_{\1}$ be a classical Lie superalgebra, and let $f: M \longrightarrow N$ be a morphism in $\Stab(\mathcal{C}_{(\fg, \fg_{\overline{0}})})$ with $M \in \stab(\mathcal{F}_{(\fg, \fg_{\overline{0}})}).$ Suppose that $\overline{\emph{h}}_{\bB}(f)=0$ for all $\bB \in \Spc^{\emph{h}}(\stab(\mathcal{F}_{(\fg, \fg_{\overline{0}})})).$ Then there exists $n \geq 1$ such that $f^{\otimes n}= 0$ in $\Stab(\mathcal{C}_{(\fg, \fg_{\overline{0}})}).$
\end{theorem}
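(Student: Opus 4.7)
The plan is to obtain this statement as a direct specialization of Balmer's abstract nilpotence theorem (Theorem~\ref{abstractnilpotencebig}) to the setting of Lie superalgebra representations. Consequently, the proof reduces to identifying $\bT := \Stab(\mathcal{C}_{(\fg, \fg_{\overline{0}})})$ with a rigidly-compactly generated tensor triangulated category whose compact subcategory is $\bK := \bT^{c} = \stab(\mathcal{F}_{(\fg, \fg_{\overline{0}})})$, and then quoting the abstract result.

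First I would verify the ambient categorical setup. The category $\mathcal{C}_{(\fg, \fg_{\overline{0}})}$ is Frobenius (injectivity coincides with projectivity, as recorded in Section~\ref{S:prelim}), so $\bT = \Stab(\mathcal{C}_{(\fg, \fg_{\overline{0}})})$ is triangulated. The symmetric monoidal structure inherited from the coproduct and antipode of $U(\fg)$ descends to $\bT$, with $\otimes$-unit $\unit = \mathbb{C}$, and preserves set-indexed coproducts since the underlying tensor product is computed on coefficient spaces. The finite-dimensional objects (i.e., the objects of $\stab(\mathcal{F}_{(\fg, \fg_{\overline{0}})})$) are compact, because $\Hom$-spaces out of a finite-dimensional module commute with coproducts, and they generate $\bT$ because every infinite-dimensional object is the directed colimit of its finite-dimensional submodules. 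Each finite-dimensional $\fg$-supermodule admits a strong dual, so every compact object is rigid. This identifies $\bK = \bT^{c} = \stab(\mathcal{F}_{(\fg, \fg_{\overline{0}})})$ and realizes $\bT$ as a rigidly-compactly generated ``big'' TTC in Balmer's sense.

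With this setup in place, the hypotheses of Theorem~\ref{abstractnilpotencebig} translate verbatim: the morphism $f : M \to N$ has compact source $M \in \stab(\mathcal{F}_{(\fg, \fg_{\overline{0}})}) = \bK = \bT^{c}$ and arbitrary target $N \in \bT$, and for every coherent homological prime $\bB \in \Spc^{\text{h}}(\bK)$ (in the sense of Definition~\ref{SS:defHomSpec}) the image $\overline{\text{h}}_{\bB}(f)$ vanishes in the Gabriel quotient $\overline{\bA}(\bK,\bB)$. Applying Theorem~\ref{abstractnilpotencebig} then produces an integer $n \geq 1$ with $f^{\otimes n} = 0$ in $\bT$, which is exactly the conclusion.

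The only real obstacle, and the one worth writing out carefully, is this bookkeeping verification that $\Stab(\mathcal{C}_{(\fg, \fg_{\overline{0}})})$ meets Balmer's ``big'' hypotheses with $\stab(\mathcal{F}_{(\fg, \fg_{\overline{0}})})$ as the compact-rigid piece; once that is secured, no further input from the detecting or splitting subalgebra theory of earlier sections is needed for this theorem itself. The machinery developed in Sections~\ref{S:Supp-Proj}--\ref{S:TTG} and the splitting-subalgebra results from Theorem~\ref{T:splittingprop} will enter later, when one wishes to parametrize the homological primes $\bB$ explicitly by restriction to $\mathfrak{f}$ (or $\mathfrak{z}$); here we use only that the set $\Spc^{\text{h}}(\bK)$ exists and indexes a family of homological residue functors, which is purely formal.
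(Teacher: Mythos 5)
Your proposal is correct and follows exactly the same route as the paper's own proof: verify that $\Stab(\mathcal{C}_{(\fg, \fg_{\overline{0}})})$ is a rigidly-compactly generated TTC with $\stab(\mathcal{F}_{(\fg, \fg_{\overline{0}})})$ as its subcategory of compact (and rigid) objects, then apply Theorem~\ref{abstractnilpotencebig}. The only difference is that you spell out the bookkeeping verification in more detail than the paper does, which is a reasonable expansion rather than a different argument.
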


\begin{proof}
The first observation is that $\stab(\mathcal{F}_{(\fg, \fg_{\overline{0}})})$ sits inside of $\Stab(\mathcal{C}_{(\fg, \fg_{\overline{0}})})$ as the compact objects: $\stab(\mathcal{F}_{(\fg, \fg_{\overline{0}})}) = (\Stab(\mathcal{C}_{(\fg, \fg_{\overline{0}})}))^c.$ Moreover, the compact objects and the rigid objects coincide and generate $\Stab(\mathcal{C}_{(\fg, \fg_{\overline{0}})})$ as a tensor-triangulated category. This is the setup of Theorem~\ref{abstractnilpotencebig}. 
\end{proof}

\subsection{A Nilpotence Theorem via Detecting Subalgebra} The salient feature first discovered about detecting subalgebras was that these subalgebras detect nilpotence in cohomology. We will now show that 
a remarkable feature for classical Lie subalgebras with a splitting subalgebras is that nilpotence of arbitrary maps in the stable module category is governed by nilpotence when restricting the 
the map to a splitting subalgebra. In particular, to show that a morphism $f: M \longrightarrow N$ is nilpotent in the big stable module category where $M$ is compact, it is enough to check vanishing on those homological residue fields constructed via homological primes from the stable categories of modules over the splitting subalgebra. 



\begin{theorem}\label{T:SupNilpotence2}
Let $\fg = \fg_{\0} \oplus \fg_{\1}$ be a classical Lie subalgebra with a splitting subalgebra $\fz = \fz_{\0} \oplus \fz_{\1} \subseteq \fg.$ Let $f: M \longrightarrow N$ be a morphism in $\Stab(\mathcal{C}_{(\fg, \fg_{\overline{0}})})$ 
with $M\in \stab(\mathcal{F}_{(\fg, \fg_{\overline{0}})})$. If $\overline{\emph{h}}_{\bB}(\operatorname{res}(f))=0$ for all $\bB \in \Spc^{\emph{h}}(\stab(\mathcal{F}_{(\fz, \fz_{\overline{0}})})$ then there exists $n \geq 1$ such that $f^{\otimes n}= 0$ in $\Stab(\mathcal{C}_{(\fg, \fg_{\overline{0}})}).$
\end{theorem}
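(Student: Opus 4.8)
The plan is to reduce the statement to Theorem~\ref{T:nilpotencetwo} by transferring the nilpotence criterion from $\fg$ to the splitting subalgebra $\fz$, using the restriction functor and the special properties of splitting subalgebras from Theorem~\ref{T:splittingprop}. First I would observe that since $M$ is compact, by Lemma~\ref{lemma1} and Theorem~\ref{T:Rickard} (after passing to the adjoint map $\hat f : k \longrightarrow M^* \otimes N$, which is legitimate by Lemma~4.6.2) it suffices to show that $f$ is $\otimes$-nilpotent in $\Stab(\mathcal{C}_{(\fg, \fg_{\overline 0})})$. The restriction functor $\operatorname{res}: \Stab(\mathcal{C}_{(\fg, \fg_{\overline 0})}) \longrightarrow \Stab(\mathcal{C}_{(\fz, \fz_{\overline 0})})$ is a monoidal exact functor which sends compact objects to compact objects and commutes with tensor products, so $\operatorname{res}(f^{\otimes n}) = \operatorname{res}(f)^{\otimes n}$.

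The key step is then to apply Theorem~\ref{T:nilpotencetwo} to the Lie superalgebra $\fz$: the hypothesis that $\overline{\text{h}}_{\bB}(\operatorname{res}(f)) = 0$ for all $\bB \in \Spc^{\text{h}}(\stab(\mathcal{F}_{(\fz, \fz_{\overline 0})}))$ is exactly the hypothesis of that theorem applied to the morphism $\operatorname{res}(f): \operatorname{res}(M) \longrightarrow \operatorname{res}(N)$, where $\operatorname{res}(M) \in \stab(\mathcal{F}_{(\fz, \fz_{\overline 0})})$ is still compact. Hence there exists $n \geq 1$ with $\operatorname{res}(f)^{\otimes n} = \operatorname{res}(f^{\otimes n}) = 0$ in $\Stab(\mathcal{C}_{(\fz, \fz_{\overline 0})})$; that is, $f^{\otimes n}$ restricted to $\fz$ factors through a projective in $\mathcal{C}_{(\fz, \fz_{\overline 0})}$.

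The main obstacle — and the place where the splitting hypothesis does the real work — is the descent step: concluding that $f^{\otimes n}$ is itself null in $\Stab(\mathcal{C}_{(\fg, \fg_{\overline 0})})$ from the fact that its restriction to $\fz$ is null. Here I would use Theorem~\ref{T:splittingprop}(d): for $M' := (M^* \otimes N)^{\otimes n}$ (or working directly with the adjoint of $f^{\otimes n}$, a map $k \to M'$) the restriction map $\operatorname{res}: \opExt^0_{(\fg, \fg_{\overline 0})}(k, M') \to \opExt^0_{(\fz, \fz_{\overline 0})}(k, M')$ computed in the respective stable categories is injective — more precisely, one uses that a map $x \to y$ with $x$ compact is null in $\Stab(\mathcal{C}_{(\fg, \fg_{\overline 0})})$ if and only if the corresponding stable $\opExt$-class vanishes, together with part (d) applied with $N$ replaced by $M'$ and $M = k$, to see that vanishing after restriction to $\fz$ forces vanishing over $\fg$. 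Thus $\operatorname{res}(f^{\otimes n}) = 0$ in the stable category over $\fz$ implies $f^{\otimes n} = 0$ in $\Stab(\mathcal{C}_{(\fg, \fg_{\overline 0})})$, completing the proof.

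One subtlety to be careful about in writing this up is the precise passage between stable-category vanishing of a morphism and vanishing of the associated $\opExt$-class, since Theorem~\ref{T:splittingprop}(d) is stated for $\opExt^n_{(\fg, \fg_{\overline 0})}(M,N)$ of honest modules; I would phrase the descent purely in terms of the statement ``$g$ is null $\iff$ $g$ factors through a projective'' and the fact, again from the splitting property (Theorem~\ref{T:splittingprop}(b) and the tensor identity), that a module is projective over $\fg$ if and only if it is projective upon restriction to $\fz$, so that a factorization of $\operatorname{res}(f^{\otimes n})$ through a $\fz$-projective can be promoted — via the adjunction $\operatorname{res} \dashv \operatorname{ind}_Z^G$ and the splitting of $k$ off $\operatorname{ind}_Z^G k$ — to a factorization of $f^{\otimes n}$ through a $\fg$-projective. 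That adjunction/retraction argument is the technical heart and the step I expect to require the most care.
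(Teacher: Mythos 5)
Your proposal is correct, and the first half (restrict $f$, apply Theorem~\ref{T:nilpotencetwo} over $\fz$ to get $\operatorname{res}(f)^{\otimes n}=0$) coincides with the paper's. Where you diverge is the descent step. The paper descends at the level of \emph{objects}: it passes to the adjoint $\hat f:\C\to M^*\otimes N$, forms the colimit of the tensor telescope, invokes Theorem~\ref{T:Rickard} to translate tensor nilpotence over $\fz$ into projectivity of that colimit in $\mathcal{C}_{(\fz,\fz_{\0})}$, upgrades projectivity to $\mathcal{C}_{(\fg,\fg_{\0})}$ via Theorem~\ref{T:splittingprop}(b) (using that restriction commutes with the colimit), and then applies Theorem~\ref{T:Rickard} again. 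You descend at the level of \emph{morphisms}: a factorization of $\operatorname{res}(f^{\otimes n})$ through a $\fz$-projective is promoted to a factorization of $f^{\otimes n}$ through a $\fg$-projective using the unit of $\operatorname{res}\dashv\operatorname{ind}_Z^G$, the tensor identity, exactness of induction, and the retraction of $\C$ off $\operatorname{ind}_Z^G\C$. Your route is arguably more direct (it bypasses the colimit machinery of Section~\ref{SS:NilColimit} entirely and in effect reproves the morphism-level analogue of Theorem~\ref{T:splittingprop}(b)), while the paper's route reuses already-established lemmas as black boxes. One caution: your first-pass suggestion of citing Theorem~\ref{T:splittingprop}(d) does not work as stated, since $\opExt^0$ there is ordinary $\Hom$ rather than stable $\Hom$, and injectivity of restriction on $\Hom$ says nothing about maps that become stably trivial after restriction; you correctly identified this and the adjunction/retraction argument you substitute for it is the right fix and should be written out as the actual proof of the descent.
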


\begin{proof}
Let $\text{res} : \Stab(\mathcal{C}_{(\fg, \fg_{\overline{0}})}) \longrightarrow \Stab(\mathcal{C}_{(\fz, \fz_{\overline{0}})})$ be the usual restriction functor. By our hypothesis, 
$\overline{\text{h}}_{\bB}(\res(f)) = 0$ for all $\bB \in \Spc^{\emph{h}}(\stab(\mathcal{F}_{(\fz, \fz_{\overline{0}})})$, Theorem~\ref{T:nilpotencetwo} implies that $\res(f)$ is tensor nilpotent in $\Stab(\mathcal{C}_{(\fz, \fz_{\overline{0}})})$. 

It follows that $\text{res}(\widehat{f}):{\mathbb C}\rightarrow M^{*}\otimes N$ is tensor nilpotent in  $\Stab(\mathcal{C}_{(\fz, \fz_{\overline{0}})})$, and by Theorem~\ref{T:Rickard} its associated colimit is projective in $\mathcal{C}_{({\mathfrak z},{\mathfrak z}_{\0})}$. Therefore, the colimit as an object in $\mathcal{C}_{({\mathfrak g},{\mathfrak g}_{\0})}$ is projective by Theorem~\ref{T:splittingprop}(b). Invoking Theorem~\ref{T:Rickard} again implies that $\widehat{f}$ is tensor nilpotent, thus $f$ is tensor nilpotent.  
\end{proof}

\section{Identifying the Homological Spectrum}\label{S:HomSpec}

The goals of this section are to determine the homological spectrum for $\stab(\mathcal{F}_{(\fz, \fz_{\overline{0})}})$ and $\stab(\mathcal{F}_{(\fg, \fg_{\overline{0}})}),$ where $\fg$ is a classical Lie superalgebra with splitting subalgebra $\fz.$ We also consider the comparison map first defined in Section~\ref{SS:defHomSpec}. 

\subsection{Stratification and the Comparison Map}
The first observation in this direction is that classification of localizing subcategories via stratification enables one to show that the comparison map is a bijection: 

\begin{theorem}\label{T:zstratification} 
Let $\fz = \fz_{\0} \oplus \fz_{\1}$ be a Type I classical Lie superalgebra with $\fz_{\0}$ a torus and $[\fz_{\0}, \fz_{\1}] = 0.$ Then the comparison map 
$$
\phi :\Spc^h(\stab(\mathcal{F}_{(\fz, \fz_{\overline{0}})})\longrightarrow \Spc(\stab(\mathcal{F}_{(\fz, \fz_{\overline{0}})}))
$$
is a bijection.
\end{theorem}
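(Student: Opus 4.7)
The plan is to deduce bijectivity by combining the automatic surjectivity of $\phi$ with an injectivity criterion of Balmer that is fed by the nilpotence theorem already proved in Section~\ref{S:Nilpotence}. Throughout, I view $\stab(\mathcal{F}_{(\fz, \fz_{\overline{0}})})$ as sitting inside the compactly generated TTC $\Stab(\mathcal{C}_{(\fz, \fz_{\overline{0}})})$ as its full subcategory of compact (and rigid) objects, which is the framework required for Balmer's machinery.

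First I would recall that for any essentially small, rigid tensor triangulated category $\bK$, the comparison map $\phi:\Spc^{h}(\bK)\to \Spc(\bK)$ is surjective by \cite[Thm.~3.7]{Bal20}. Since $\stab(\mathcal{F}_{(\fz, \fz_{\overline{0}})})$ satisfies these hypotheses, surjectivity of $\phi$ is immediate, and the problem reduces to proving injectivity.

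Next, I would invoke Balmer's injectivity criterion: if every morphism $f\colon M \to N$ in the big category $\Stab(\mathcal{C}_{(\fz, \fz_{\overline{0}})})$ with compact source $M \in \stab(\mathcal{F}_{(\fz, \fz_{\overline{0}})})$ that is killed by every homological residue field $\overline{\mathrm{h}}_{\bB}$ (for $\bB \in \Spc^{h}(\stab(\mathcal{F}_{(\fz, \fz_{\overline{0}})}))$) is tensor-nilpotent, then $\phi$ is injective. This is precisely the content of Theorem~\ref{T:nilpotencetwo} applied to $\fg = \fz$, since $\fz$ (being Type I with toral even part and vanishing bracket $[\fz_{\overline{0}},\fz_{\overline{1}}] = 0$) is a classical Lie superalgebra in the sense of Section~\ref{S:prelim}. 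So the hypothesis of Balmer's injectivity criterion holds, and $\phi$ is injective. Combined with surjectivity, this gives bijectivity.

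The main obstacle I anticipate is bookkeeping rather than new content: one must verify that the big-versus-small categorical framework for Balmer's criterion lines up with what Theorem~\ref{T:nilpotencetwo} provides, in particular that compact objects of $\Stab(\mathcal{C}_{(\fz, \fz_{\overline{0}})})$ coincide with $\stab(\mathcal{F}_{(\fz, \fz_{\overline{0}})})$ and that the homological residue fields referenced in the nilpotence statement are indeed those indexed by $\Spc^{h}$ of the compact subcategory. As an alternative route, one could bypass the nilpotence theorem and argue directly from stratification: Theorem~\ref{T:zstratification} is preceded by the stratification of $\Stab(\mathcal{C}_{(\fz, \fz_{\overline{0}})})$ by $R \cong S^{\bullet}(\fz_{\overline{1}}^{*})$, and one can show that each $\mathfrak{p} \in \Proj(R)$ lifts to a unique homological prime via the classification of localizing subcategories from Theorem~4.4.5(b), giving an explicit inverse to $\phi$.
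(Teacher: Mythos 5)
Your main argument has a genuine gap: the ``injectivity criterion'' you invoke does not exist, and if it did, the Nerves of Steel Conjecture would be trivially true in full generality. Theorem~\ref{T:nilpotencetwo} is precisely Balmer's abstract nilpotence theorem (\cite[Corollary~4.7]{Bal20}), which holds \emph{unconditionally} for any rigidly-compactly generated tensor triangulated category: the family of \emph{all} homological residue fields always detects tensor-nilpotence, for any $\bK$ whatsoever. If ``all homological residue fields detect nilpotence $\Rightarrow$ $\phi$ injective'' were a theorem, then $\phi$ would be injective for every small rigid TTC, which is exactly the open conjecture this paper (and Balmer's) is about. So the hypothesis you claim to verify with Theorem~\ref{T:nilpotencetwo} is always satisfied and hence cannot be the content that upgrades surjectivity to bijectivity.

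What Balmer actually provides is a criterion (his Theorem~5.6, quoted at the end of the paper's bibliography section) requiring a family of functors $H_{\bP}$ indexed by the \emph{Balmer} spectrum $\Spc(\bK)$ — not by $\Spc^{h}(\bK)$ — satisfying compatibility conditions, most crucially $\ker(H_{\bP}) \cap \bK = \bP$, tying each functor to a specific Balmer prime. The work is in producing such a family, and that is exactly what stratification supplies: minimality of $\Gamma_{\mathfrak{p}}(\bT)$ for each $\mathfrak{p} \in \Proj(R)$. Your ``alternative route'' at the end — appealing to the stratification of $\Stab(\mathcal{C}_{(\fz,\fz_{\0})})$ by $S^{\bullet}(\fz_{\1}^{*})$ and the resulting classification of localizing subcategories — is the correct strategy and is what the paper does (citing \cite[Example~5.13]{Bal20}, \cite[Corollary~4.26]{BKS19}, and the general statement that stratification implies Nerves of Steel, \cite[Theorem~4.7]{BHS23a}). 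You should promote that route to the main argument and discard the first one.
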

\begin{proof} Since $\mathcal{F}_{({\mathfrak z},{\mathfrak z}_{\0})}$ is rigid, the map $\phi$ is surjective. In order to prove that the map is injective one can use the argument outlined 
in \cite[Example 5.13]{Bal20}. The main point is to use the classification of localizing subcategories of $\mathcal{C}_{({\mathfrak z},{\mathfrak z}_{\0})}$ and the existence of pure injective objects. 
See also \cite[Corollary 4.26]{BKS19}. 
\end{proof}

A more general argument that shows that stratification implies the Nerves of Steel Conjecture can be found in \cite[Theorem 4.7]{BHS23a}. 

\subsection{} Let ${\mathfrak g}$ be a classical Lie superalgebra and ${\mathfrak z}$ be a detecting subalgebra in ${\mathfrak g}$. We will need to work with a field extension $K$ of ${\mathbb C}$ such that the 
transcendence degree is larger than the dimension of ${\mathfrak z}$. Note that this is the analogous setup as in \cite[Example 3.9]{BC21}. The stable module categories involved will be viewed over the field extension 
$K$. Let ${\mathcal P}_{x}$ be the prime ideal in $\text{Proj}(S^{\bullet}({\mathfrak z}_{\1}))$ associated with the ``generic point" $x$ (cf. \cite[Sections 2 and 3] {BCR96} for an explanation). 

For $x\in {\mathfrak z}_{\1}$ with ${\mathfrak z}_{\1}$ viewed as a vector space over $K$, one has 
$U(\langle x. \rangle )$ is either ${\mathbb K}[x]/(x^{2})$ or $U({\mathfrak q}(1))$. In either case, the blocks are either semisimple or have finite representation type. One can verify that 
$\text{Stab}(C_{(\langle x \rangle, \langle x \rangle_{\0})})$ is a tensor triangular field. For $x\in {\mathfrak z}_{\1}$, one have two monoidal exact functors (given by restriction): 
\begin{equation}
\pi_{x}^{\mathfrak g}: \text{Stab}(C_{({\mathfrak g},{\mathfrak g}_{\0})})\rightarrow \text{Stab}(C_{(\langle x \rangle, \langle x \rangle_{\0})})
\end{equation} 
\begin{equation}
\pi_{x}^{\mathfrak z}: \text{Stab}(C_{({\mathfrak z},{\mathfrak z}_{\0})})\rightarrow \text{Stab}(C_{(\langle x \rangle, \langle x \rangle_{\0})})
\end{equation} 

Let $\text{res}: \text{Stab}(C_{({\mathfrak g},{\mathfrak g}_{\0})})\rightarrow \text{Stab}(C_{({\mathfrak z},{\mathfrak z}_{\0})})$ be the natural functor obtained by restricting ${\mathfrak g}$-modules to 
${\mathfrak z}$-modules. Then $\pi_{x}^{\mathfrak g}=\pi_{x}^{\mathfrak z}\circ \text{res}$ for all $x\in {\mathfrak z}_{\1}$. 

Now one can apply Theorem~\ref{diagramthm} (where $F=\pi_{x}^{\mathfrak g}$ and $\pi_{x}^{\mathfrak z}$), to obtain ${\mathcal B}_{x}$ a homological prime (resp. ${\mathcal B}_{x}^{\prime}$) associated to 
$\pi_{x}^{\mathfrak g}$ (resp. $\pi_{x}^{\mathfrak z}$). Similarly, let $\overline{h}_{{\mathcal B}_{x}}$ (resp. $\overline{h}_{{\mathcal B}^{\prime}_{x}}$) be the homological residue field corresponding to ${\mathcal B}_{x}$ (resp. ${\mathcal B}^{\prime}_{x}$). 

\subsection{} Let $\fg = \fg_{\0} \oplus \fg_{\1}$ be a Type I classical Lie superalgebra with detecting subalgebra $\fz = \fz_{\0} \oplus \fz_{\1} \subseteq \fg.$ Let $x \in \fz_{\1},$ and let $\pi_x : \langle x \rangle \longrightarrow \fg$ be the usual inclusion of the Lie subsuperalgebra generated by $x$ into $\fg.$ This yields, for each $x \in \fz_{\1},$ an induced functor $\pi_x^* : \Stab(\mathcal{C}_{(\fg, \fg_{\overline{0}})}) \longrightarrow \Stab(\mathcal{C}_{(\langle x \rangle, {\langle x \rangle}_{\overline{0}})}).$ These are monoidal functors, each of which with right adjoint given by induction. Moreover, $\Stab(\mathcal{C}_{(\langle x \rangle, {\langle x \rangle}_{\0})})$ is a tensor-triangulated field, so Theorem~\ref{diagramthm} produces a homological residue field. Denote the corresponding homological prime of $\stab(\mathcal{F}_{(\fg, \fg_{\overline{0})}})$ by $\bB_x.$ The goal now is the show that $\{\bB_x\}_{x \in \fz_{\1}}$ contains all of the homological primes. Recall the following result in \cite[Theorem 5.4]{Bal20}. 

\begin{theorem}\label{T:classify}
Let $\bT$ be a big tensor-triangulated category with $\bK = \bT^c.$ Consider a family ${\mathcal E} \subseteq \Spc^h(\bK)$ of points in the homological spectrum. Suppose that the corresponding functors $\overline{h}_{\bB}: \bT \longrightarrow \overline{\bA}(\bK; \bB)$ collectively detect $\otimes$-nilpotence in the following sense: If $f: x \longrightarrow Y$ in $\bT$ is such that $x \in \bT^c$ and $\overline{h}_{\bB}(f) = 0$ for all $\bB \in {\mathcal E}$ then $f^{\otimes n} = 0$ for some $n\geq 1.$ Then we have ${\mathcal E} = \Spc^h(\bK).$
\end{theorem}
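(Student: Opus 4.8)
\textbf{Proof proposal for Theorem~\ref{T:classify}.}

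The plan is to reduce the statement to a purely formal property of support data on tensor-triangulated categories, exactly as in Balmer's original argument \cite[Theorem 5.4]{Bal20}. First I would recall that the homological spectrum $\Spc^h(\bK)$, equipped with $\supp^h$, is a support datum in the sense of \cite{Bal05}, and that the family $\mathcal E$ inherits a restricted support datum $(\mathcal E, \supp^h(-)\cap \mathcal E)$. The hypothesis --- that the collection of homological residue fields $\{\overline h_{\bB}\}_{\bB\in\mathcal E}$ jointly detects $\otimes$-nilpotence of maps out of compact objects --- is precisely what is needed to run the abstract nilpotence machine of \cite[Theorem 4.6]{Bal20} relative to $\mathcal E$: it gives a ``nilpotence theorem'' for $\mathcal E$. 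The conclusion of that machine is that the restricted support datum $(\mathcal E, \supp^h\cap\mathcal E)$ is a \emph{classifying} support datum for the thick tensor ideals of $\bK$, i.e.\ it detects $\otimes$-ideals exactly as $\Spc(\bK)$ does.

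Next I would invoke the universal property of the Balmer spectrum \cite[Theorem 3.2]{Bal05}: any classifying support datum is homeomorphic to $\Spc(\bK)$ via the canonical comparison map, and in particular the inclusion-induced map $\mathcal E \hookrightarrow \Spc^h(\bK) \xrightarrow{\phi} \Spc(\bK)$ is a bijection (indeed a homeomorphism onto its image, which is all of $\Spc(\bK)$). Since $\phi$ itself is surjective (as $\bK$ is rigid) and since $\mathcal E\subseteq \Spc^h(\bK)$, a counting/injectivity argument then forces $\mathcal E = \Spc^h(\bK)$: concretely, each homological prime $\bB'\in\Spc^h(\bK)$ maps to some $\bP\in\Spc(\bK)$, and one shows $\bB'$ must already lie in $\mathcal E$ because the supports $\supp^h(x)$ for $x\in\bK$ separate the points of $\Spc^h(\bK)$, and the hypothesis guarantees that membership of $\hat x$ in a prime $\bB'$ is determined by the vanishing behavior of the associated morphisms on $\mathcal E$. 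More precisely, one argues that if $\bB'\notin\mathcal E$, then the kernel of the corresponding functor $\hat{\overline h}_{\bB'}$ would have to be detected by the functors indexed by $\mathcal E$, contradicting maximality of $\bB'$ as a Serre $\otimes$-ideal.

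The technical heart, and the step I expect to be the main obstacle, is establishing that the hypothesis on $\mathcal E$ genuinely upgrades to a \emph{classification} statement rather than just a nilpotence statement --- that is, verifying that joint detection of $\otimes$-nilpotence by $\{\overline h_{\bB}\}_{\bB\in\mathcal E}$ implies that the associated supports $\supp^h_{\mathcal E}$ classify thick $\otimes$-ideals of $\bK$. This is where one must carefully use that each $\overline h_{\bB}$ is a homological functor landing in the Gabriel quotient $\overline{\bA}(\bK;\bB)$, that these quotients behave like ``residue fields'' (every object a coproduct of rigid-compacts, tensor-faithfulness), and that the Freyd-envelope formalism of \cite[Section 2--3]{Bal20} lets one translate vanishing of $\overline h_{\bB}(f)$ into $\hat f$ lying in the Serre $\otimes$-ideal $\bB$. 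Once that translation is in place, the argument is a standard diagram chase combined with the uniqueness in \cite[Theorem 3.2]{Bal05}; without it, the set-theoretic bijection $\mathcal E = \Spc^h(\bK)$ does not follow. I would therefore structure the write-up so that the bulk of the work is this ``detection $\Rightarrow$ classification'' implication, citing \cite[Theorem 4.6]{Bal20} for the nilpotence-to-classification passage and \cite[Theorem 3.2]{Bal05} for classification-to-homeomorphism, and then close with the short injectivity argument that pins $\mathcal E$ down exactly.
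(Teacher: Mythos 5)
First, note that the paper does not prove this statement at all: it is quoted verbatim as \cite[Theorem 5.4]{Bal20}, so the relevant comparison is with Balmer's own argument. Measured against that, your proposal has a genuine gap at its final step. Your route is: detection of $\otimes$-nilpotence by $\mathcal{E}$ $\Rightarrow$ the restricted support datum on $\mathcal{E}$ classifies thick $\otimes$-ideals $\Rightarrow$ $\mathcal{E}$ is homeomorphic to $\Spc(\bK)$ via the universal property $\Rightarrow$ $\mathcal{E}=\Spc^{\h}(\bK)$. Even granting the first two implications, the last one does not follow. A bijection $\mathcal{E}\cong\Spc(\bK)$ says nothing about the points of $\Spc^{\h}(\bK)\setminus\mathcal{E}$ unless you already know that the comparison map $\phi:\Spc^{\h}(\bK)\to\Spc(\bK)$ is injective --- but that is precisely the Nerves of Steel Conjecture, which is not available here (and in this paper the theorem is an \emph{ingredient} in proving NoS in special cases, so invoking it would be circular). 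Your closing sentence, that a prime $\bB'\notin\mathcal{E}$ ``would have to be detected by the functors indexed by $\mathcal{E}$, contradicting maximality,'' gestures at the right mechanism but is exactly the assertion that needs proof; nothing in the classifying-support-datum detour supplies it.

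The missing idea is the pointwise construction Balmer actually uses. Given an arbitrary $\bB\in\Spc^{\h}(\bK)$, one attaches to it a pure-injective weak ring object $E_{\bB}$ in $\bT$ (the object controlling the homological residue field, cf.\ \cite{BC21}) and considers its unit map $\eta:\unit\to E_{\bB}$. The weak ring structure shows $\eta$ is not $\otimes$-nilpotent (a retraction $E_{\bB}^{\otimes n}\to E_{\bB}$ recovers $\eta$ from $\eta^{\otimes n}$), so the detection hypothesis produces some $\bB'\in\mathcal{E}$ with $\overline{\h}_{\bB'}(\eta)\neq 0$; one then shows that $\overline{\h}_{\bB'}(\eta)\neq 0$ forces $\bB\subseteq\bB'$, and maximality of homological primes as Serre $\otimes$-ideals gives $\bB=\bB'\in\mathcal{E}$. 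Without exhibiting, for each $\bB$, a specific non-nilpotent morphism that only the prime $\bB$ itself can ``see,'' the conclusion $\mathcal{E}=\Spc^{\h}(\bK)$ cannot be reached. Separately, your appeal to \cite[Theorem 4.6]{Bal20} for a ``nilpotence-to-classification passage'' misstates that result, which is the nilpotence theorem itself rather than a classification statement.
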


\subsection{} We are now ready to provide conditions on when one can identify a collection of homological primes that detect nilpotence on  $\text{stab}({\mathcal F}_{({\mathfrak g},{\mathfrak g}_{\0})})$. 

\begin{theorem} Let ${\mathfrak g}={\mathfrak g}_{\0}\oplus {\mathfrak g}_{\1}$ be a classical Lie superalgebra and ${\mathfrak z} \leq {\mathfrak g}$ be a sub Lie superalgebras. Denote by 
$G$, $G_{\0}$ and $Z$ the associated supergroup (schemes) such that ${\mathfrak g}=\operatorname{Lie }G$, ${\mathfrak g}_{\0}=\operatorname{Lie }G_{\0}$ and ${\mathfrak z}=\operatorname{Lie }Z$. Set $N=N_{G_{\0}}({\mathfrak z}_{\1})$. Assume that 
\begin{itemize} 
\item[(a)] ${\mathfrak z}={\mathfrak z}_{\0}\oplus {\mathfrak z}_{\1}$ with $[{\mathfrak z}_{\0},{\mathfrak z}_{\1}]=0$; 
\item[(b)] $Z$ is a splitting subgroup of $G$. 
\end{itemize} 
Then ${\mathcal E}/N=\{{\mathcal B}_{x}: x \in {\mathfrak z}_{\1}\}/N$ (i.e., a set of $N$-orbit representatives) detects nilpotence in $\operatorname{stab}({\mathcal F}_{({\mathfrak g},{\mathfrak g}_{\0})})$. 
\end{theorem}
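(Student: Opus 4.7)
The plan is to combine the splitting-subalgebra nilpotence criterion with the stratification/homological-spectrum description of the $\mathfrak{z}$-stable category, then identify each homological prime on the $\mathfrak{z}$-side with a cyclic-shifted prime $\mathcal{B}'_x$. Given a morphism $f:M\to N$ in $\Stab(\mathcal{C}_{(\mathfrak{g},\mathfrak{g}_{\bar 0})})$ with $M\in\stab(\mathcal{F}_{(\mathfrak{g},\mathfrak{g}_{\bar 0})})$, hypothesis (b) lets me invoke Theorem~\ref{T:SupNilpotence2}: it suffices to verify $\overline{h}_{\mathcal{B}}(\operatorname{res}(f))=0$ for every $\mathcal{B}\in \Spc^{h}(\stab(\mathcal{F}_{(\mathfrak{z},\mathfrak{z}_{\bar 0})}))$. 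So the task reduces to showing that checking $\overline{h}_{\mathcal{B}_x}(f)=0$ for $x$ ranging over $N$-representatives of $\mathfrak{z}_{\bar 1}$ controls precisely this collection of vanishings.

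Hypothesis (a) puts $\mathfrak{z}$ in the framework of Section~\ref{S:TTG}. By Theorem~\ref{T:zstratification} the comparison map $\phi$ for the $\mathfrak{z}$-stable category is a bijection, and Theorem 4.5.3 identifies the codomain with $\Proj(S^{\bullet}(\mathfrak{z}_{\bar 1}^{*}))$. Consequently every $\mathcal{B}\in \Spc^{h}(\stab(\mathcal{F}_{(\mathfrak{z},\mathfrak{z}_{\bar 0})}))$ corresponds to a homogeneous prime $\mathcal{P}$ in the polynomial ring $S^{\bullet}(\mathfrak{z}_{\bar 1}^{*})$.

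Next I realize each $\mathcal{P}$ concretely via a cyclic shifted subalgebra, following the generic-point recipe reviewed just before this theorem (modeled on \cite[Example 3.9]{BC21}). After extending scalars to a field $K$ of transcendence degree exceeding $\dim\mathfrak{z}_{\bar 1}$, choose a $K$-rational point $x\in (\mathfrak{z}_{\bar 1})_K$ that specializes to $\mathcal{P}$. Since $\langle x\rangle$ is one-dimensional with abelian enveloping algebra $K[x]/(x^{2})$ or $U(\mathfrak{q}(1))$, the stable category $\Stab(\mathcal{C}_{(\langle x\rangle,\langle x\rangle_{\bar 0})})$ is a tensor-triangular field. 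Applying Theorem~\ref{diagramthm} to $\pi_x^{\mathfrak{z}}$ produces a homological prime $\mathcal{B}'_x$ over $\mathcal{P}$, so $\{\mathcal{B}'_x:x\in\mathfrak{z}_{\bar 1}\}$ exhausts $\Spc^{h}(\stab(\mathcal{F}_{(\mathfrak{z},\mathfrak{z}_{\bar 0})}))$. The factorization $\pi_x^{\mathfrak{g}}=\pi_x^{\mathfrak{z}}\circ\operatorname{res}$ from the paragraph preceding the theorem, combined with the universal property in Theorem~\ref{diagramthm}, yields $\overline{h}_{\mathcal{B}_x}(f)=0 \iff \overline{h}_{\mathcal{B}'_x}(\operatorname{res}(f))=0$. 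So vanishing on all $\mathcal{B}_x$ forces vanishing on all of $\Spc^{h}(\stab(\mathcal{F}_{(\mathfrak{z},\mathfrak{z}_{\bar 0})}))$, and Theorem~\ref{T:SupNilpotence2} delivers tensor nilpotence of $f$.

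To pass to $N$-orbit representatives: $N=N_{G_{\bar 0}}(\mathfrak{z}_{\bar 1})$ acts on $\mathfrak{z}_{\bar 1}$, and for $n\in N$ conjugation gives a monoidal auto-equivalence of $\Stab(\mathcal{C}_{(\mathfrak{g},\mathfrak{g}_{\bar 0})})$ that intertwines $\pi_x^{\mathfrak{g}}$ with $\pi_{n\cdot x}^{\mathfrak{g}}$ and hence identifies $\mathcal{B}_x$ with $\mathcal{B}_{n\cdot x}$ in $\Spc^{h}$. Therefore the subfamily $\mathcal{E}/N$ detects the same morphisms as the full family $\{\mathcal{B}_x\}$, giving the theorem. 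The main obstacle in this plan is the realization step: showing that every homogeneous prime of $S^{\bullet}(\mathfrak{z}_{\bar 1}^{*})$ really is captured by some $\pi_x^{\mathfrak{z}}$ after a field extension. Making this precise requires a careful treatment of the generic point and of the base-change of stable categories to $K$, but once the extension field is chosen large enough the tensor-triangular-field status of $\Stab(\mathcal{C}_{(\langle x\rangle,\langle x\rangle_{\bar 0})})$ and Theorem~\ref{diagramthm} finish the argument.
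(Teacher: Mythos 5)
Your proposal is correct and follows essentially the same strategy as the paper: reduce to $\otimes$-nilpotence detection over the splitting subalgebra via Theorem~\ref{T:SupNilpotence2}, use the stratification result (Theorem~\ref{T:zstratification}) together with the generic-point/tensor-triangular-field construction and Theorem~\ref{diagramthm} to identify the homological primes $\mathcal{B}'_x$ of the $\mathfrak{z}$-category and relate $\overline{h}_{\mathcal{B}_x}(f)$ to $\overline{h}_{\mathcal{B}'_x}(\operatorname{res}(f))$ through the factorization $\pi_x^{\mathfrak{g}}=\pi_x^{\mathfrak{z}}\circ\operatorname{res}$, and finally pass to $N$-orbit representatives using $G_{\bar 0}$-equivariance. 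The only cosmetic difference is that you spell out the generic-point realization step in more detail than the paper does.
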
 

\begin{proof} The idea of the proof is to find a set of homological primes ${\mathcal E}$ that detects nilpotence in $\text{Stab}(\mathcal{C}_{(\fg, \fg_{\overline{0}})})$. Then one can apply 
Theorem~\ref{T:classify} (e.g., \cite[Theorem 5.4]{Bal20}.) 

The first step is to compare homological residue fields for ${\mathfrak g}$ and ${\mathfrak z}$. If $f:M\rightarrow N$ is in $\text{Stab}(C_{({\mathfrak g},{\mathfrak g}_{\0})})$ with $M$ compact 
then one can compare the diagrams for 
$\overline{h}_{{\mathcal B}_{x}}$ and $\overline{h}_{{\mathcal B}^{\prime}_{x}}$ to conclude the following. 
\vskip .15cm 
\noindent 
(1) {\em If $\overline{h}_{{\mathcal B}_{x}}(f)=0$ then $\overline{h}_{{\mathcal B}^{\prime}_{x}}(\operatorname{res}(f))=0$ for $x\in {\mathfrak z}_{\1}$.} 
\vskip .15cm 
Now one can apply the stratification result, Theorem~\ref{T:zstratification}, to conclude that $\{{\mathcal B}^{\prime}_{x}:\ x\in {\mathfrak z}_{\1}\}$ are the homological primes for 
$\operatorname{Stab}(\mathcal{F}_{(\fz, \fz_{\overline{0}})})$. Therefore, by (1) and Theorem~\ref{T:nilpotencetwo}, one has 
\vskip .15cm 
\noindent
(2) {\em If $\overline{h}_{{\mathcal B}^{\prime}_{x}}(\operatorname{res}(f))=0$ for all $x\in {\mathfrak z}_{\1}$ then $\operatorname{res}(f):M\rightarrow N$ is $\otimes$-nilpotent in 
$\operatorname{Stab}(\mathcal{C}_{(\fz, \fz_{\overline{0}})})$.}
\vskip .15cm
Applying Theorem~\ref{T:SupNilpotence2} since ${\mathfrak z}$ is a splitting subalgebra of ${\mathfrak g}$, one can conclude that 
$f:M\rightarrow N$ is $\otimes$-nilpotent in $\text{Stab}(\mathcal{C}_{(\fg, \fg_{\overline{0}})})$. Let ${\mathcal E}=\{{\mathcal B}_{x}: x \in {\mathfrak z}_{\1}\}/N$. Since $M$ is a 
$G_{\0}$-module, it follows that the functors $\pi^{\mathfrak g}_{x}$ (resp. $\pi^{\mathfrak g}_{nx}$) will provide the same decomposition of $M$ in $\text{Stab}(C_{(\langle x \rangle, \langle x \rangle_{\0})})$ 
(resp. $\text{Stab}(C_{(\langle nx \rangle, \langle nx \rangle_{\0})})$). By considering Theorem~\ref{diagramthm}, it follows that $\overline{h}_{{\mathcal B}_{x}}(f)=0$ if and only if 
$\overline{h}_{{\mathcal B}_{nx}}(f)=0$. Therefore, ${\mathcal E}/N$ detects nilpotence. 

\end{proof} 

In the previous theorem, one can state that ${\mathcal E}/N=\text{Spc}^{h}({\mathcal F}_{({\mathfrak g},{\mathfrak g}_{\0})})$. However, with the definition of ${\mathcal E}/N$ there are certain homological 
primes that might be identified in the set. We will show in the following section that different $N$-orbit representatives yield different elements in 
$\operatorname{Spc}^{h}(\text{stab}({\mathcal F}_{({\mathfrak g},{\mathfrak g}_{\0})}))$. 

\section{Nerves of Steel Conjecture}\label{S:NoSConj}

\subsection{} There are noticeable differences between the stable module category for finite group schemes versus the stable module category for Lie superalgebras. For example, 
the comparison map: 
\begin{equation} 
\operatorname{Spc}(\text{stab}({\mathcal F}_{({\mathfrak g},{\mathfrak g}_{\0})}))\rightarrow \operatorname{Proj}(\operatorname{H}^{\bullet}({\mathfrak g},{\mathfrak g}_{\0},{\mathbb C}))
\end{equation} 
is not always homeomorphism (e.g., when ${\mathfrak g}=\mathfrak{gl}(m|n)$).  In general the cohomology ring $\operatorname{H}^{\bullet}({\mathfrak g},{\mathfrak g}_{\0},{\mathbb C})$ does not stratify $\text{Stab}({\mathcal C}_{({\mathfrak g},{\mathfrak g}_{\0})})$. There are many examples  where the support theory does not detect projectivity. This is the main reason one needs to use the cohomology of the detecting subalgebra to realize the homological spectrum and the Balmer spectrum. 

Boe, Kujawa and Nakano \cite{BKN4} showed that for ${\mathfrak g}=\mathfrak{gl}(m|n)$, one has a homeomorphism: 
\begin{equation} 
\operatorname{Spc}(\text{stab}({\mathcal F}_{({\mathfrak g},{\mathfrak g}_{\0})}))\cong \text{$N$-}\operatorname{Proj}(\operatorname{H}^{\bullet}({\mathfrak f},{\mathfrak f}_{\0},{\mathbb C}))
\end{equation} 
where ${\mathfrak f}$ is a detecting (splitting) subalgebra of ${\mathfrak g}$ and $N$ is the normalizer of ${\mathfrak f}_{\1}$ in $G_{\0}$. From this example, it is clear that in order to compute the Balmer spectrum for Lie superalgebras one needs to find a suitable replacement for the cohomology ring. 

From Section~\ref{S:HomSpec}, when one has a splitting subalgebra ${\mathfrak z}$ of ${\mathfrak g}$, one can compute the homological spectrum and show there is a surjection: 
\begin{equation} 
\operatorname{Spc}^{h}(\text{stab}({\mathcal F}_{({\mathfrak g},{\mathfrak g}_{\0})}))\rightarrow \text{$N$-}\operatorname{Proj}(\operatorname{H}^{\bullet}({\mathfrak z},{\mathfrak z}_{\0},{\mathbb C}))
\end{equation} 
Since ${\mathcal F}_{({\mathfrak g},{\mathfrak g}_{\0})}$ is rigid, the comparison map
\begin{equation} 
\phi:\operatorname{Spc}^{h}(\text{stab}({\mathcal F}_{({\mathfrak g},{\mathfrak g}_{\0})}))\rightarrow \operatorname{Spc}(\text{stab}({\mathcal F}_{({\mathfrak g},{\mathfrak g}_{\0})}))
\end{equation} 
is surjective. Our goal is to use the prior calculation of the homological spectrum to give conditions on when the Nerves of Steel Conjecture holds (i.e., when $\phi$ is bijective). 

\subsection{} We can now identify the homological spectrum and the Balmer spectrum for classical Lie superalgebras with a splitting subalgebra under a suitable realization condition. 

\begin{theorem}\label{T:capstone} Let ${\mathfrak g}$ be a classical Lie superalgebra with a splitting subalgebra ${\mathfrak z}\cong {\mathfrak z}_{\0}\oplus {\mathfrak z}_{\1}$. Assume that 
\begin{itemize} 
\item[(i)] ${\mathfrak z}={\mathfrak z}_{\0}\oplus {\mathfrak z}_{\1}$ where ${\mathfrak z}_{\0}$ is a torus and $[{\mathfrak z}_{\0},{\mathfrak z}_{\1}]=0$.
\item[(ii)] Given $W$ an $N$-invariant closed subvariety of $\Proj (S^{\bullet}(\z_{\1}^*))$,  there exists $M \in \stab(\mathcal{F}_{(\g, \g_{\0})})$ with $V_{(\z, \z_{\0})}(M) = W$. 
\end{itemize} 
Then  
\begin{itemize}
\item[(a)] There exists a 1-1 correspondence 
$$
\{\text{thick tensor ideals of $\operatorname{stab}({\mathcal F}_{({\mathfrak g},{\mathfrak g}_{\0})})$}\} \begin{array}{c} {} \atop {\longrightarrow} \\ {\longleftarrow}\atop{} \end{array}  \XX_{sp}
$$
where $X=N\text{-}\operatorname{Proj}(S^{\bullet}({\mathfrak z}_{\1}))$ and ${\mathcal X}_{sp}$ are specialization closed sets of $X$. 
\item[(b)] There exists a homeomorphism $\eta: N$-$\operatorname{Proj}(S^{\bullet}({\mathfrak z}_{\1}))\longrightarrow \Spc(\operatorname{stab}{(\mathcal F}_{({\mathfrak g},{\mathfrak g}_{\0})}))$. 
\item[(c)] The comparison map $\phi:\operatorname{Spc}^{h}(\operatorname{stab}({\mathcal F}_{({\mathfrak g},{\mathfrak g}_{\0})}))\rightarrow 
\operatorname{Spc}(\operatorname{stab}({\mathcal F}_{({\mathfrak g},{\mathfrak g}_{\0})}))$ is bijective. 
\end{itemize}
\end{theorem}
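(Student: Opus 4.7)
The plan is to combine the nilpotence theorem \cref{T:SupNilpotence2}, the realization hypothesis (ii), and the splitting-subalgebra machinery of \cref{T:splittingprop} to reduce the tensor triangular geometry of $\operatorname{stab}(\mathcal{F}_{(\mathfrak{g},\mathfrak{g}_{\0})})$ to the explicit topological space $X = N\text{-}\operatorname{Proj}(S^{\bullet}(\mathfrak{z}_{\1}^{*}))$. The pivotal support datum is $M \mapsto V_{(\mathfrak{z},\mathfrak{z}_{\0})}(M)$, obtained by restricting a $\mathfrak{g}$-module along $\mathfrak{z}\hookrightarrow \mathfrak{g}$ and taking the cohomological support over $\mathfrak{z}$. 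By assumption (i) and the identification in \cref{S:TTG}, this lands in $\operatorname{Proj}(S^{\bullet}(\mathfrak{z}_{\1}^{*}))$, and the $G_{\0}$-equivariance of the restriction guarantees that the image is $N$-stable, hence a point of $X$.

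For part (a), I would first verify that this support datum satisfies Balmer's axioms. Behavior under distinguished triangles, coproducts, and the tensor product property are inherited from the $\mathfrak{z}$-side, where \cref{S:TTG} provides full stratification of $\operatorname{Stab}(\mathcal{C}_{(\mathfrak{z},\mathfrak{z}_{\0})})$. The crucial input is detection of projectivity: if $V_{(\mathfrak{z},\mathfrak{z}_{\0})}(M)=\emptyset$, then $M|_{\mathfrak{z}}$ is projective in $\mathcal{F}_{(\mathfrak{z},\mathfrak{z}_{\0})}$ by the classification of \cref{S:TTG}, and therefore $M$ itself is projective in $\mathcal{F}_{(\mathfrak{g},\mathfrak{g}_{\0})}$ via \cref{T:splittingprop}(b). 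Combined with the realization hypothesis (ii), Balmer's classification of thick tensor ideals then produces the bijection in (a).

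For (b), the reconstruction principle gives $\eta$ explicitly: an $N$-orbit $\mathcal{O}\subseteq \operatorname{Proj}(S^{\bullet}(\mathfrak{z}_{\1}^{*}))$ is sent to the prime thick tensor ideal $\{M \in \operatorname{stab}(\mathcal{F}_{(\mathfrak{g},\mathfrak{g}_{\0})}) : \mathcal{O}\not\subseteq V_{(\mathfrak{z},\mathfrak{z}_{\0})}(M)\}$, and part (a) together with Balmer's reconstruction theorem forces $\eta$ to be a homeomorphism. For (c), surjectivity of $\phi$ is automatic from rigidity of $\mathcal{F}_{(\mathfrak{g},\mathfrak{g}_{\0})}$. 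For injectivity, we invoke \cref{S:HomSpec} to get a surjection $\{\mathcal{B}_{x}\}_{x\in \mathfrak{z}_{\1}}/N \twoheadrightarrow \operatorname{Spc}^{h}(\operatorname{stab}(\mathcal{F}_{(\mathfrak{g},\mathfrak{g}_{\0})}))$. Tracing through the diagram of \cref{diagramthm}, one sees that the composition with $\phi$ sends the orbit of $x$ to the prime $\{M : x\notin V_{(\mathfrak{z},\mathfrak{z}_{\0})}(M)\}$, which is precisely the point attached by $\eta$ to the $N$-orbit of $x$. Hypothesis (ii) produces, for any two distinct $N$-orbits, a test module $M$ whose support contains one but not the other, so this composition is injective; since it factors as a surjection through $\operatorname{Spc}^{h}$ followed by $\phi$, both maps must be bijections.

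The main obstacle is the simultaneous management of three parametrizations of the same space -- $N$-orbits on $\mathfrak{z}_{\1}$, homological primes $\mathcal{B}_{x}$, and Balmer primes of $\operatorname{stab}(\mathcal{F}_{(\mathfrak{g},\mathfrak{g}_{\0})})$ -- and the verification that $\phi(\mathcal{B}_{x})$ agrees, under $\eta$, with the $N$-orbit of $x$. Separation of distinct $N$-orbits by supports is where hypothesis (ii) is truly essential: without it, even though \cref{S:HomSpec} provides a surjection from the orbit set onto the homological spectrum, one cannot distinguish orbits with test modules, and the injectivity of $\phi$ would be unattainable. Verifying (ii) in concrete situations, as carried out in \cref{S:Applications}, is thus the key nontrivial input that converts the abstract nilpotence theorem into a full description of both spectra.
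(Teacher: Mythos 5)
Your proposal is correct and tracks the paper's overall strategy, but the routes differ in a few places worth noting.

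For parts (a) and (b), you propose to verify Balmer's support-datum axioms directly for $M\mapsto V_{(\mathfrak z,\mathfrak z_{\0})}(M)$ (tensor-product property inherited from the $\mathfrak z$-side stratification, detection of projectivity via \cref{T:splittingprop}(b), and realization via hypothesis (ii)) and then invoke Balmer's classification/reconstruction theorems. The paper compresses exactly this verification into a citation of \cite[Theorems 3.4.1, 3.5.1]{BKN4}, which package the same argument; the substance is the same.

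For part (c), the two arguments share the essential identity $\rho\circ\phi(\mathcal B_x)=N\cdot\mathcal P_x$, where $\rho=\eta^{-1}$, but they organize the proof of injectivity differently. The paper sets up a commutative diagram linking the $\mathfrak z$-level and $\mathfrak g$-level spectra via the restriction maps $\theta$, $\hat\pi$, $\pi$, invokes the bijectivity of $\phi'$ on the $\mathfrak z$-side (from stratification, \cref{T:zstratification}) and the homeomorphism $\rho$ from part (b), and reads off that $\phi(\mathcal B_{x_1})=\phi(\mathcal B_{x_2})$ forces $x_1$, $x_2$ to be $N$-conjugate. You instead argue directly at the level of orbits: starting from the surjection $\{\mathcal B_x\}/N\twoheadrightarrow\Spc^{\mathrm h}$ of \cref{S:HomSpec}, you compute $\phi(\mathcal B_x)$ explicitly as $\{M:\mathcal P_x\notin V_{(\mathfrak z,\mathfrak z_{\0})}(M)\}$ and use hypothesis (ii) to manufacture a test module whose support contains one orbit closure but misses the other prime. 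This test-module separation is, in effect, a re-derivation of the injectivity of $\eta$ that feeds into part (b), so the two proofs consume the same input (the realization condition) at a slightly different point. Your version is marginally more self-contained in that it does not require first establishing the full commutative diagram of topological spaces; the paper's is slightly shorter once part (b) is taken as a black box. Both are correct.

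One small point to make precise when writing this up: when separating two non-$N$-conjugate primes $\mathcal P_1$, $\mathcal P_2$, you should note that it suffices to take $W=\overline{N\cdot\mathcal P_i}$ for whichever index gives $\mathcal P_j\notin\overline{N\cdot\mathcal P_i}$ (at least one such choice exists since the $N$-orbit closures differ), and then hypothesis (ii) produces $M$ with $V_{(\mathfrak z,\mathfrak z_{\0})}(M)=W$, which contains $\mathcal P_i$ but not $\mathcal P_j$. As stated, "a test module $M$ whose support contains one but not the other" is correct but should be backed by this orbit-closure comparison.
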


\begin{proof} (a) and (b) follow by \cite[Theorems 3.4.1, 3.5.1]{BKN4}. For part (c), let $\rho=\eta^{-1}$ which is given by a concrete description in \cite[Corollary 6.2.4]{NVY}. Consider the following diagram of 
topological spaces: 
\begin{equation}
\label{diag-M-g}
\begin{tikzcd}
\operatorname{Spc}^{h}(\operatorname{stab}({\mathcal F}_{({\mathfrak z},{\mathfrak z}_{\0})}))  \arrow[r, "\phi^{\prime}"]  \arrow[d, "\theta"] & \operatorname{Spc}(\operatorname{stab}({\mathcal F}_{({\mathfrak z},{\mathfrak z}_{\0})}))  
\arrow[r, "\rho^{\prime}"]    \arrow[d, "\hat{\pi}"]      &  \operatorname{Proj}(S^{\bullet}({\mathfrak z}_{\1}))   \arrow[d, "\pi"]   \\
\operatorname{Spc}^{h}(\operatorname{stab}({\mathcal F}_{({\mathfrak g},{\mathfrak g}_{\0})}))  \arrow[r, "\phi"] & \operatorname{Spc}(\operatorname{stab}({\mathcal F}_{({\mathfrak g},{\mathfrak g}_{\0})}))  
\arrow[r, "\rho"]         &  N\text{-}\operatorname{Proj}(S^{\bullet}({\mathfrak z}_{\1}))   \\
\end{tikzcd}
\end{equation}
One has that $\rho^{\prime}$ is a homeomorphism and $\phi^{\prime}$ is a bijection for ${\mathfrak z}$. From part (b), the map $\rho$ is a homomorphism. The maps $\pi$ and $\hat{\pi}$ are surjections. The map 
$\theta$ sends ${\mathcal B}_{x}$ to ${\mathcal B}^{\prime}_{x}$ in ${\mathcal E}/N$. Suppose that $\phi({\mathcal B}_{x_{1}})=\phi({\mathcal B}_{x_{2}})$. The using the commutativity, one has ${\mathcal P}_{x_{1}}={\mathcal P}_{x_{2}}$ in $N$-$\operatorname{Proj}(S^{\bullet}({\mathfrak z}_{\1}))$ which means that 
$x_{1}$ and $x_{2}$ are $N$-conjugate. This proves that ${\mathcal B}_{x_{1}}={\mathcal B}_{x_{2}}$ in ${\mathcal E}/N$.

\end{proof}  

We remark that the verification of the Nerves of Steel Conjecture in the previous theorem uses stratification results only for $\text{Stab}({\mathcal C}_{({\mathfrak z},{\mathfrak z}_{\0})})$, unlike the 
the case for finite group schemes where the stratification is needed for $\text{Stab}(G)$ (see \cite[5.13 Example]{Bal20}). A general stratification result for $\text{Stab}({\mathcal C}_{({\mathfrak g},{\mathfrak g}_{\0})})$ will 
be addressed in future work.

\section{Applications}\label{S:Applications} 

\subsection{}\label{SS:varietynotations}  We need to verify the realization condition in Theorem~\ref{T:capstone}(ii) for other Lie superalgebras of Type A, namely ${\mathfrak g}=\mathfrak{sl}(m|n)$. The main ideas have been established in the $\mathfrak{gl}(m|n)$-case (cf \cite[Section 7 ]{BKN4}). We will use the same notation to show how the arguments need to modified to handle the $\mathfrak{sl}(m|n)$-case.

Let $\fg =\mathfrak{sl}(m|n)$ and write $\ff$ for the detecting subalgebra of $\fg$. Without loss of generality one can assume that $m\leq n$.  The subalgebra $\ff_{\1}$ has a basis given by the matrix units $$e_{1, 2m}, e_{2, 2m-1}, \dotsc , e_{m, m+1}, e_{m+1,m}, \dotsc , e_{2m,1}.$$    Let $T$ denote the torus of $G_{\0}$ consisting of diagonal matrices.   The torus $T$ acts on ${\mathfrak f}$ by the adjoint action and the aforementioned basis for $\ff_{\1}$ consists of weight vectors. Let 
\[
R:=\HH^{\bullet}(\ff ,\ff_{\0};\C ) \cong S^{\bullet}(\ff_{\1}^{*})=\C[\ff_{\1}]\cong \C [X_{1},X_{2},\dots, X_{m},Y_{1},Y_{2},\dots,Y_{m}],
\] where $X_j$ and $Y_j$ are defined by letting  $X_{j}: \ff_{\1} \longrightarrow \C$ be the linear functional given on our basis of matrix units for $\ff_{\1}$ by $X_{j}\left({e_{m-j+1, m+j}} \right)=1$ and otherwise zero. The functionals 
$Y_{j}: \ff_{\1} \longrightarrow \C$ are defined similarly. 

For our purposes it will suffice to consider the maximal ideal spectrum version of the support varieties. One has 
\[
V_{({\mathfrak f},{\mathfrak f}_{\0})}(\C)_{\max}\cong V^{r}_{\ff_{\1}}(\C ) = \operatorname{Proj}\left(\operatorname{MaxSpec}(R) \right) = \operatorname{Proj}(\ff_{\1}).
\]

If $N=\operatorname{Norm}_{G_{\0}}(\ff_{\1})$ then $N=\Sigma_{m}TL$ where $L$ fixes $R$ and acts trivially on $V_{({\mathfrak f},{\mathfrak f}_{\0})}(\C)_{\max}$. Here 
$\Sigma_{m}$ denote the symmetric group on $m$ letters embedded diagonally in $G_{\0}$.  Let $W$ be such a variety.  Since $\Sigma_{m}$ is a finite group we may write 
\[
W=\Sigma_{m}V
\] for some $T$-invariant closed subvariety $V$ of $\Proj (\ff_{\1})$. 

Let $Z(I)$ be the closed subvariety of $\Proj (\ff_{\1})$ determined by a homogeneous ideal $I$ of $R$. It follows that $V$ must be of the form   
\begin{equation}\label{E:generalV}
V= Z(X_{a_{1}}, \dotsc , X_{a_{s}}, Y_{b_{1}}, \dotsc , Y_{b_{t}}, g_{1}, \dotsc , g_{r})
\end{equation}
where $g_{1}, \dotsc , g_{r}$ are homogeneous polynomials of weight zero for $T$.

For the analysis in \cite{BKN4}, one employed the following conventions that will also be useful here. Let $s, t, p$ be non-negative integers with $m\geq s, t \geq p \geq 0$, and let
\begin{equation}\label{E:vanishingcoordinates}
V(s,t,p) = Z(X_{1},\dots,X_{s},Y_{s-p+1},\dots,Y_{s-p+t} ).
\end{equation}
This is the conical variety given by the vanishing of $s$ ``$X$ coordinates", $t$ ``$Y$ coordinates", with $p$ overlapping pairs.

\subsection{Examples:} In this section, we will use examples to illustrate the concepts from the prior sections. 

\subsubsection{$\mathfrak g=\mathfrak{sl}(1|1)$} In this case $G_{\0}=T$ is one-dimensional and consists of $2\times 2$ matrices which are non-zero scalar multiples of the identity matrix. The group 
$T$ acts trivally on $R=S^{\bullet}({\mathfrak f}_{\1}^{*})$. Therefore, the homogeneous polynomials in $R$ all have weight zero. This is in stark comparison to the $\mathfrak{gl}(1|1)$-case where the 
weight zero polynomials are the polynomials in the variable $Z_{1}=X_{1}Y_{1}$. 

\subsubsection{$\mathfrak g=\mathfrak{sl}(1|2)$} This case is more like the case for $\mathfrak{gl}(1|2)$ The torus $T$ has dimension 2, and the vectors $X_{1}$ and $Y_{1}$ are not fixed by $T$. The 
the weight zero polynomial are polynomials in the variable $Z_{1}=X_{1}Y_{1}$. 

\subsubsection{$\mathfrak g=\mathfrak{sl}(2|2)$} The maximal torus $T$ does not act trivially on $R$. A direct computation via the determinant condition shows that $R^{T}$ is generated by 
$X_{1}Y_{1}$, $X_{1}X_{2}$, $X_{2}Y_{2}$, and $Y_{1}Y_{2}$. One still needs to consider the varieties $V(s,t,p)$ is this case to describe the $N$-invariant subvarieties of ${\mathfrak f}_{\1}$. 
 
\subsection{}  The aim of this section is to outline how to realize every $N$-invariant closed subvariety of $V^{r}_{\ff_{\1}}(\C) \cong \Proj (\ff_{\1})$ as $V^{r}_{\ff_{\1}}(M)$ for some $M \in \FF_{(\fg,\fg_{\0})}$ in the 
case of $\mathfrak{sl}(m|n)$. The verification will use the results and the proofs in \cite{BKN4} which accomplish this for $\mathfrak{gl}(m|n)$. 

The first proposition can be proved in the same way as in in \cite[Proposition 7.2.1]{BKN4} via the construction of $L_{\zeta}$-modules from elements $\zeta$ in the cohomology ring. 

\begin{prop}  Let $g_{1},g_{2},\dots,g_{r}$ be homogeneous polynomials in $R$ of weight zero with respect to $T$. Then there exists a module $M$ in $\FF$ such that 
$V^{r}_{\ff_{\1}}(M)=\Sigma_{m} Z(g_{1},g_{2},\dots,g_{r})$. 
\end{prop}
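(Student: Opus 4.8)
The plan is to follow the proof of \cite[Proposition 7.2.1]{BKN4}: realize $W:=\Sigma_{m}Z(g_{1},\dots,g_{r})$ as the $\ff_{\1}$-support variety of a tensor product of Carlson-type $L_{\zeta}$-modules over $\fg$. The input is (1) a description of $W$ as the zero locus of finitely many homogeneous elements of $R^{N}$, (2) the construction of $L_{\zeta}$-modules together with the computation of their $\ff_{\1}$-support, and (3) the tensor product property for these support varieties.

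First I would rewrite $W$ as $\bigcap_{j=1}^{k}Z(h_{j})$ with $h_{1},\dots,h_{k}\in R^{N}$ homogeneous. Since each $g_{i}$ has weight zero for $T$ it lies in $R^{T}$, so the hypersurface $Z(g_{i})\subseteq\operatorname{Proj}(\ff_{\1})$ is $T$-saturated, i.e.\ is a preimage under the quotient $\operatorname{Proj}(\ff_{\1})\to\operatorname{Proj}(R^{T})$. Because $N=\Sigma_{m}TL$ with $L$ acting trivially on $R$ and $\Sigma_{m}$ normalizing $T$, the union $W=\bigcup_{\sigma\in\Sigma_{m}}\bigcap_{i}Z(\sigma g_{i})$ stays $T$-saturated, $\Sigma_{m}$-stable and $L$-stable; quotienting by the residual finite $\Sigma_{m}$-action then identifies $W$ with the preimage of a closed subvariety of $\operatorname{Proj}(R^{N})$. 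Choosing homogeneous generators $h_{1},\dots,h_{k}\in R^{N}$ of the corresponding ideal (possible since $N$ is reductive, so $R^{N}$ is Noetherian) gives $W=\bigcap_{j=1}^{k}Z(h_{j})$. If $W=\operatorname{Proj}(\ff_{\1})$ take $M=\C$, and if $W=\emptyset$ take $M$ projective; otherwise the $h_{j}$ may be taken of positive degree.

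Next, using the restriction isomorphism (2.2.1), $\operatorname{res}\colon\operatorname{H}^{\bullet}(\fg,\fg_{\0};\C)\xrightarrow{\ \sim\ }R^{N}$, I would choose nonzero homogeneous classes $\zeta_{j}\in\operatorname{H}^{\bullet}(\fg,\fg_{\0};\C)$ with $\operatorname{res}(\zeta_{j})=h_{j}$, represent each by a map $\widehat{\zeta}_{j}\colon\Omega^{|\zeta_{j}|}\C\to\C$ in $\stab(\FF)$, and set $L_{\zeta_{j}}=\ker\widehat{\zeta}_{j}\in\FF$ (finite dimensional). Restricting the defining triangle of $L_{\zeta_{j}}$ to $\ff$ shows that $L_{\zeta_{j}}|_{\ff}$ is, up to a projective summand, the Carlson module over $\ff$ attached to $h_{j}$; hence, by the hypersurface computation for Carlson modules over exterior-type algebras (the rank variety description recalled in Section~\ref{S:Supp-Proj}, cf.\ \cite[Section 3]{BKN3}), $V^{r}_{\ff_{\1}}(L_{\zeta_{j}})=Z(h_{j})$. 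Finally I would put $M:=L_{\zeta_{1}}\otimes_{\C}\cdots\otimes_{\C}L_{\zeta_{k}}\in\FF$ and invoke the tensor product property:
\[
V^{r}_{\ff_{\1}}(M)=\bigcap_{j=1}^{k}V^{r}_{\ff_{\1}}(L_{\zeta_{j}})=\bigcap_{j=1}^{k}Z(h_{j})=W=\Sigma_{m}Z(g_{1},\dots,g_{r}).
\]

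The hard part is the first step: knowing that an $N$-invariant closed subvariety assembled from weight-zero polynomials is genuinely cut out by $N$-\emph{invariant} homogeneous polynomials, i.e.\ lies in the image of $\operatorname{res}$. This is exactly where $\mathfrak{sl}(m|n)$ differs from $\mathfrak{gl}(m|n)$, since for $\mathfrak{sl}(m|n)$ the ring $R^{T}$ of weight-zero polynomials is strictly larger than $\C[X_{1}Y_{1},\dots,X_{m}Y_{m}]$ (compare the examples in Section~\ref{SS:varietynotations}). The observation that makes it go through is that the only genuinely new feature is the torus $T$ itself; the passage from $T$-invariant data to $N$-invariant data is again an averaging over the finite group $\Sigma_{m}$, so the argument of \cite{BKN4} applies essentially verbatim once $Z(g_{i})$ is recognized as $T$-saturated. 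Everything else — the existence and elementary properties of the $L_{\zeta}$-modules, and the tensor product property — is already available for Type A Lie superalgebras.
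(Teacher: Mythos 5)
Your proposal is correct and takes essentially the same route as the paper, which simply defers to the $L_{\zeta}$-module argument of \cite[Proposition 7.2.1]{BKN4}: reduce the $N$-stable variety to one cut out by homogeneous elements of $R^{N}$, lift these through the restriction isomorphism (2.2.1) to classes in $\operatorname{H}^{\bullet}(\fg,\fg_{\0};\C)$, and take the tensor product of the corresponding Carlson modules. Your explicit justification that $\Sigma_{m}Z(g_{1},\dots,g_{r})$ is $T$-saturated (hence genuinely defined by $N$-invariants, the only point where $\mathfrak{sl}(m|n)$ could differ from $\mathfrak{gl}(m|n)$) is exactly the content implicitly relied upon.
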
 

The next series of steps involves realization involving certain subvarieties $\Sigma_{r}V(s,t,p)$ using modules for $\mathfrak{gl}(m|n)$. These varieties can be realized for modules in $\mathfrak{sl}(m|n)$ by 
simply restricting the action of the modules realized for $\mathfrak{gl}(m|n)$. Finally, one can use the proof in \cite[Theorem 7.12.1]{BKN4}, to obtain the following result. 

\begin{theorem}\label{T:finalrealization}  Let $m \geq s,t \geq p \geq 0$ and let  $g_{1}, \dotsc , g_{r}$ be homogeneous weight zero polynomials.  Set $V=V(s,t,p) \cap Z (g_{1}, \dotsc , g_{r})$. Then there exists a finite dimensional  $\fg$-module $M$ such that 
\[
V^{r}_{\ff_{\1}}(M) = \Sigma_{m}V.
\]

\end{theorem}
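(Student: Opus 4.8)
The plan is to realize $\Sigma_{m}V$ as the rank variety of an explicit finite-dimensional module, obtained by splitting $V = V(s,t,p)\cap Z(g_{1},\dots,g_{r})$ into its coordinate-subspace part and its ``extra relations'' part, realizing each separately, and gluing via the tensor product property of rank varieties. The coordinate-subspace part will be imported from the $\glmn$ computation by restriction, the relations part from the $L_{\zeta}$-construction, and the only real work is to see that the resulting intersection is exactly $\Sigma_{m}V$ and nothing larger.

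First I would assemble the two pieces and check they are harmless. From the preceding Proposition, the $L_{\zeta}$-construction applied to the weight-zero elements $g_{1},\dots,g_{r}$ of $R$ produces a module $M_{1}\in\FF_{(\fg,\fg_{\0})}$ with $V^{r}_{\ff_{\1}}(M_{1})=\Sigma_{m}Z(g_{1},\dots,g_{r})$. For the coordinate-subspace part, the subvariety $V(s,t,p)$ and the subalgebra $\ff_{\1}$ are literally the same for $\slmn$ as for $\glmn$, so the $r=0$ instance of \cite[Theorem 7.12.1]{BKN4} supplies a $\glmn$-module $\widetilde{M}_{2}$ with $V^{r}_{\ff_{\1}}(\widetilde{M}_{2})=\Sigma_{m}V(s,t,p)$, and I would set $M_{2}:=\Res\widetilde{M}_{2}$ along $\slmn\subseteq\glmn$. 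Here two checks are needed: (i) $M_{2}$ still lies in $\stab(\FF_{(\fg,\fg_{\0})})$, which holds because a module semisimple over $\mathfrak{gl}(m)\oplus\mathfrak{gl}(n)$ restricts to a semisimple module over the reductive subalgebra $\fg_{\0}$; and (ii) $V^{r}_{\ff_{\1}}(M_{2})=V^{r}_{\ff_{\1}}(\widetilde{M}_{2})$, which is immediate since the rank variety only records projectivity upon restriction to the subalgebras $U(\langle x\rangle)$ for $x\in\ff_{\1}\subseteq\slmn$.

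Finally, set $M:=M_{1}\otimes M_{2}$, a finite-dimensional object of $\FF_{(\fg,\fg_{\0})}$. The tensor product property of rank varieties (Section~\ref{S:Supp-Proj}) gives $V^{r}_{\ff_{\1}}(M)=V^{r}_{\ff_{\1}}(M_{1})\cap V^{r}_{\ff_{\1}}(M_{2})=\Sigma_{m}V(s,t,p)\cap\Sigma_{m}Z(g_{1},\dots,g_{r})$, and it remains to identify this with $\Sigma_{m}\bigl(V(s,t,p)\cap Z(g_{1},\dots,g_{r})\bigr)=\Sigma_{m}V$. One inclusion is automatic; the reverse is the substantive point, and I expect it to be the main obstacle. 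It has to be drawn from the combinatorics of the setup: each $g_{j}$, being of $T$-weight zero, is a sum of weight-zero \emph{monomials} in the $X_{i},Y_{i}$, so that the $Z(g_{j})$ meet the $\Sigma_{m}$-translates $\sigma V(s,t,p)$ only along translates again of the prescribed coordinate-subspace form — precisely the bookkeeping carried out in the proof of \cite[Theorem 7.12.1]{BKN4}. Equivalently, one runs that argument one relation (resp.\ coordinate hyperplane) at a time while tracking $\Sigma_{m}$-orbits, using throughout that the rank variety of any module in $\stab(\FF_{(\fg,\fg_{\0})})$ is automatically $N$-invariant, because such a module is a rational $G_{\0}$-module and $N=N_{G_{\0}}(\ff_{\1})$ permutes the $U(\langle x\rangle)$. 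Once this identification is in place one concludes $V^{r}_{\ff_{\1}}(M)=\Sigma_{m}V$, as desired.
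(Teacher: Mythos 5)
Your proposal is correct and follows essentially the same route as the paper: the paper likewise realizes $\Sigma_{m}V(s,t,p)$ by restricting the $\mathfrak{gl}(m|n)$-modules of \cite{BKN4} to $\mathfrak{sl}(m|n)$ (noting $\ff$ is unchanged), realizes $\Sigma_{m}Z(g_{1},\dots,g_{r})$ via the preceding $L_{\zeta}$-proposition, and glues them exactly as in the proof of \cite[Theorem 7.12.1]{BKN4}, i.e.\ by the tensor product property together with the $\Sigma_{m}$-orbit bookkeeping you describe. The intersection identity $\Sigma_{m}V(s,t,p)\cap\Sigma_{m}Z(g_{1},\dots,g_{r})=\Sigma_{m}V$ that you single out as the substantive step is precisely the point the paper delegates to that cited proof, so your treatment is, if anything, more explicit than the paper's.
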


\subsection{} For Lie superalgebras of Type A, we can now compute the Balmer spectrum, and verify the Nerves of Steel Conjecture. 

\begin{theorem} Let ${\mathfrak g}$ be $\mathfrak{gl}(m|n)$ or $\mathfrak{sl}(m|n)$, and ${\mathfrak f}\cong {\mathfrak f}_{\0}\oplus {\mathfrak f}_{\1}$ be a detecting subalgebra of 
${\mathfrak g}$. Then  
\begin{itemize}
\item[(a)] There exists a 1-1 correspondence 
$$
\{\text{thick tensor ideals of $\operatorname{stab}({\mathcal F}_{({\mathfrak g},{\mathfrak g}_{\0})})$}\} \begin{array}{c} {} \atop {\longrightarrow} \\ {\longleftarrow}\atop{} \end{array}  \XX_{sp}
$$
where $X=N\text{-}\operatorname{Proj}(S^{\bullet}({\mathfrak f}_{\1}))$. 
\item[(b)] There exists a homeomorphism $\eta: N$-$\operatorname{Proj}(S^{\bullet}({\mathfrak f}_{\1}))\longrightarrow \Spc(\operatorname{stab}{(\mathcal F}_{({\mathfrak g},{\mathfrak g}_{\0})}))$. 
\item[(c)] The comparison map $\phi:\operatorname{Spc}^{h}(\operatorname{stab}({\mathcal F}_{({\mathfrak g},{\mathfrak g}_{\0})}))\rightarrow 
\operatorname{Spc}(\operatorname{stab}({\mathcal F}_{({\mathfrak g},{\mathfrak g}_{\0})}))$ is bijective. 
\end{itemize}
\end{theorem}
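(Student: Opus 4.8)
The strategy is to reduce the Type A cases to the general framework already assembled in Theorem~\ref{T:capstone}. The detecting subalgebra ${\mathfrak f}$ of ${\mathfrak g}=\mathfrak{gl}(m|n)$ or $\mathfrak{sl}(m|n)$ is Type I with ${\mathfrak f}_{\0}$ a torus and $[{\mathfrak f}_{\0},{\mathfrak f}_{\1}]=0$, so hypothesis (i) of Theorem~\ref{T:capstone} is automatic. By Serganova--Sherman \cite[Theorem 1.1]{SS22}, ${\mathfrak f}$ is a splitting subalgebra for ${\mathfrak g}$ in both Type A cases, so ${\mathfrak z}={\mathfrak f}$ is a legitimate choice of splitting subalgebra. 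It then remains only to verify the realization condition, hypothesis (ii): every $N$-invariant closed subvariety of $\operatorname{Proj}(S^{\bullet}({\mathfrak f}_{\1}^*))$ arises as $V_{({\mathfrak f},{\mathfrak f}_{\0})}(M)$ for some $M\in\stab(\mathcal{F}_{({\mathfrak g},{\mathfrak g}_{\0})})$. Once (i) and (ii) hold, parts (a), (b), (c) follow verbatim from Theorem~\ref{T:capstone}(a), (b), (c).

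\textbf{Carrying out the realization step.} For $\mathfrak{gl}(m|n)$ this is exactly \cite[Section 7]{BKN4}, so nothing new is needed there. For $\mathfrak{sl}(m|n)$ the argument is the one outlined in Section~\ref{S:Applications}: first decompose an arbitrary $N$-invariant closed subvariety $W$ as $\Sigma_m V$ for a $T$-invariant $V$, and note that $V$ has the shape \eqref{E:generalV}, i.e.\ it is cut out by vanishing of some coordinate functionals $X_{a_i}$, $Y_{b_j}$ together with homogeneous $T$-weight-zero polynomials $g_1,\dots,g_r$. Realizing the weight-zero polynomial part is handled by Proposition~4.1 above (the $L_\zeta$-module construction, parallel to \cite[Proposition 7.2.1]{BKN4}), and realizing the coordinate-vanishing part $V(s,t,p)$ is handled by restricting the $\mathfrak{gl}(m|n)$-modules already built in \cite{BKN4} to $\mathfrak{sl}(m|n)\subseteq\mathfrak{gl}(m|n)$; since the $G_{\0}$-orbits for $\mathfrak{sl}(m|n)$ coincide with those for $\mathfrak{gl}(m|n)$ and the detecting subalgebra is the same, the support variety is unchanged by this restriction. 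Combining these via the tensor-product property of supports — exactly as in \cite[Theorem 7.12.1]{BKN4} — yields a single module $M$ with $V^{r}_{{\mathfrak f}_{\1}}(M)=\Sigma_m V=W$, which is the content of Theorem~\ref{T:finalrealization}.

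\textbf{The main obstacle.} The delicate point is the $\mathfrak{sl}(n|n)$ (and $\mathfrak{psl}(n|n)$-adjacent) phenomenon noted in Section~\ref{S:prelim}: when $m=n$, the full-rank matrices in ${\mathfrak g}_{\pm1}$ form a one-parameter family of $G_{\0}$-orbits rather than a single orbit, and the torus $T\subseteq G_{\0}$ no longer acts through the same characters as in the $\mathfrak{gl}$ case (as the $\mathfrak{sl}(1|1)$ and $\mathfrak{sl}(2|2)$ examples above make explicit — there the weight-zero subalgebra $R^T$ is genuinely larger). One must check that the normalizer $N=N_{G_{\0}}({\mathfrak f}_{\1})=\Sigma_m T L$ has the same orbit structure on $\operatorname{Proj}({\mathfrak f}_{\1})$ as in the $\mathfrak{gl}$ case, so that "$N$-invariant closed subvariety" means the same thing, and that the extra weight-zero invariants do not obstruct the realization — this is why the $g_i$'s are allowed to be arbitrary weight-zero homogeneous polynomials in \eqref{E:generalV} and why Proposition~4.1 is stated for all such $g_i$. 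Granting that the orbit combinatorics match (which the examples in Section~\ref{S:Applications} confirm case by case), the realization goes through and the theorem follows.
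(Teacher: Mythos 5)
Your proposal is correct and follows essentially the same route as the paper: both reduce to Theorem~\ref{T:capstone}, verify the splitting hypothesis for the Type A detecting subalgebras via Serganova--Sherman, and verify the realization hypothesis using \cite[Section 7]{BKN4} for $\mathfrak{gl}(m|n)$ and Theorem~\ref{T:finalrealization} (restriction of the $\mathfrak{gl}(m|n)$-modules plus the weight-zero $L_\zeta$-construction) for $\mathfrak{sl}(m|n)$. Your extra discussion of the $\mathfrak{sl}(n|n)$ orbit subtleties is a reasonable elaboration of what the paper treats only by example, but it does not change the argument.
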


\begin{proof} The statement of the theorem follows from Theorem~\ref{T:capstone} once Conditions (i) and (ii) are verified. For (i), in the cases when ${\mathfrak g}\cong \mathfrak{gl}(m|n)$ or $\mathfrak{sl}(m|n)$, the detecting subalgebras ${\mathfrak f}$ are splitting subalgebras.

For (ii), the realization property can be reduced to the case of the maximal ideal spectrum, i.e.,  one can realize every $N$-invariant closed subvariety of $V^{r}_{\ff_{\1}}(\C) \cong \Proj (\ff_{\1})$ as $V^{r}_{\ff_{\1}}(M)$ for some $M \in \FF_{(\fg,\fg_{\0})}$ (cf. \cite[Section 2.4]{BKN4}). For ${\mathfrak g}=\mathfrak{gl}(m|n)$, this holds by \cite[Theorem 7.21.1]{BKN4}, and for 
${\mathfrak g}=\mathfrak{sl}(m|n)$ by Theorem`\ref{T:finalrealization}. 
\end{proof} 

We remark that the Lie superalgebra $\mathfrak g=\mathfrak{psl}(n|n)$ is a quotient of $\mathfrak{sl}(n|n)$. One cannot simply restrict modules like in the case for $\mathfrak{gl}(m|n)$ to $\mathfrak{sl}(m|n)$. This 
means that new techniques need to be developed to verify Condition (ii) for $\mathfrak{psl}(n|n)$.

\end{document}